\font\myfont=cmr12 at 20pt
\title[A Fractal Uncertainty Principle for the STFT and Gabor Multipliers]{\myfont{\textbf{A Fractal Uncertainty Principle for the Short-Time Fourier Transform and Gabor Multipliers}}}
\author{Helge Knutsen}
\email{helge.knutsen@ntnu.no}
\date{April 2022}
\address{Department of Mathematical Sciences, Norwegian University of Science and \mbox{Technology,} \newline 7034 Trondheim, Norway}
\keywords{Fractal Uncertainty Principle, Short-Time Fourier Transform, Daubechies' localization operator, Gabor frames, Gabor multipliers, Cantor set, Nyquist density}
\subjclass[2010]{47A30, 30H20}
\theoremstyle{plain} 
\newtheorem{definition}{Def.}[section]
\newtheorem{theorem}{Theorem}[section]
\newtheorem{corollary}{Corollary}[section]
\newtheorem{lemma}[theorem]{Lemma}
\theoremstyle{definition} 
\newtheorem*{remark}{Remark}
\newtheorem{example}{Example}[section]
\numberwithin{equation}{section} 
\DeclarePairedDelimiter\ceil{\lceil}{\rceil}
\DeclareMathOperator*{\esssup}{ess\,sup} 
\DeclareMathAlphabet{\pazocal}{OMS}{zplm}{m}{n}
\begin{document}

\maketitle



\begin{abstract}
We study the fractal uncertainty principle in the joint time-frequency representation, and we prove a version for the Short-Time Fourier transform with Gaussian window on the modulation spaces. This can equivalently be formulated in terms of projection operators on the Bargmann-Fock spaces of entire functions. Specifically for signals in $L^2(\mathbb{R}^d)$, we obtain norm estimates of Daubechies' time-frequency localization operator localizing on porous sets.
The proof is based on the maximal Nyquist density of such sets, and for multidimensional Cantor iterates we derive explicit upper bound asymptotes. Finally, we translate the fractal uncertainty principle to discrete Gaussian Gabor multipliers.
\end{abstract}


\section{Introduction}
The \textit{fractal uncertainty principle} (FUP) was first introduced and developed for the separate time-frequency representation in \cite{Dyatlov_Zahl_2016}, \cite{Bourgain_Dyatlov_2018}, \cite{Dyatlov_Jin_2018}, see also \cite{Jin_Zhang_FUP_2020} for explicit estimates. It states that no signal in $L^2(\mathbb{R})$ can be concentrated near fractal sets in both time and frequency. We reference Dyatlov's detailed introduction to the topic \cite{DyatlovFUP2019}, where fractals sets are defined broadly in terms of either $\delta$-regular sets or almost equivalently in terms of $\nu$-porous sets within the scale bounds $0<h$ to $1$ (see Definition \ref{definition_nu_porosity}). In both definitions, Dyatlov considers families of subsets $T(h),\Omega(h)\subseteq [0,1]$ and formulates the FUP for said families as the lower bound scale $h\to 0$. The FUP is presented as a norm estimate for the localization operator $\chi_{\Omega(h)}\pazocal{F}_{h}\chi_{T(h)}$, where $\chi_{E}$ denotes the characteristic function of a subset $E$, and $\pazocal{F}_{h}$ denotes the dilated Fourier transform $\pazocal{F}_{h}f(\omega) :=\sqrt{h}^{-1}\pazocal{F}f(\omega h^{-1})$. In terms of $\nu$-porosity (see Theorem 2.19 in \cite{DyatlovFUP2019}), for signals in $f\in L^2(\mathbb{R})$ and for families $T(h),\Omega(h)\subseteq[0,1]$ of $\nu$-porous sets on scales $h$ to $1$, there exist constants $C,\beta>0$ only dependent on $\nu>0$ such that
\begin{align*}
    \| \chi_{\Omega(h)}\pazocal{F}_{h}\chi_{T(h)}\|_{L^2(\mathbb{R})\to L^2(\mathbb{R})} \leq C h^{\beta} \ \ \forall \ \ 0<h\leq 1.
\end{align*}
Alternatively, if we disentangle $h$ from the Fourier transform and write $\sqrt{h}$ as $h$, we obtain a statement with regard to families $T(h),\Omega(h)\subseteq[0,h^{-1}]$ of $\nu$-porous sets on scales $h$ to $h^{-1}$, to which there exist constants $C, \beta>0$ only dependent on $\nu>0$ so that 
\begin{align}
    \| \chi_{\Omega(h)}\pazocal{F}\chi_{T(h)}\|_{L^2(\mathbb{R})\to L^2(\mathbb{R})} \leq C h^{\beta} \ \ \forall \ \ 0<h\leq 1.
    \label{FUP_disentangled_statement}
\end{align}
On this form, the FUP more clearly reads as an uncertainty principle as, depending on our choice of $\nu$, the measures of our time and frequency set, $|T(h)|$ and $|\Omega(h)|$, respectively, might tend to infinity as $h\to 0$.

Inspired by the FUP in the separate representation and motivated by the understanding that uncertainty principles should be present regardless of time-frequency representation, we search for analogous results in the joint representation. In particular, we consider and have considered Daubechies' localization operator $P_{\Omega}$ based on the Short-Time Fourier Transform (STFT) with the Gaussian window that projects onto some subset $\Omega$ of the time-frequency plane. In previous installments \cite{HelgeKnutsen_Daubechies_2020}, \cite{HelgeKnutsen_2022}, we have restricted our attention to radially symmetric subsets in $\mathbb{R}^2$, as this yields a known eigenbasis, the Hermite functions, and explicit formulas for the associated eigenvalues. With such insights, we have been able to derive estimates for the operator norm when localizing on radial Cantor iterates that mirror estimate \eqref{FUP_disentangled_statement} but with explicit estimates for the exponent, sometimes even \textit{precise} estimates. The radial assumption has also proved effective for Bergman spaces and by extension for analytic wavelets in \cite{Abreu_Mouayn_Voigtlaender_2022_FUP_Bergman_spaces_analytic_Wavelets}, where direct knowledge of the eigenvalues of the localization operator have produced similar estimates when localizing on the mid-third radial Cantor set. 

In the present paper, however, we abandon the radial assumption and instead consider the more general problem of optimal localization on $\nu$-porous sets in phase space $\mathbb{R}^{2d}$ for arbitrary $d\geq 1$. Although we no longer have knowledge of the eigenvalues of such an operator, the Gaussian window in the STFT in and of itself provides additional structure. Namely, Daubechies' operator $P_{\Omega}:L^2(\mathbb{R}^d)\to L^2(\mathbb{R}^d)$ can equivalently be viewed as a Toeplitz operator on Bargmann-Fock spaces or simply Fock spaces, $\pazocal{F}^2(\mathbb{C}^d)$, of square integrable entire functions with respect to a Gaussian measure. With this perspective, we utilize the subaveraging properties of entire functions to derive estimates of the operator norm $\| P_{\Omega}\|_{\text{op}}$ in terms of the maximal Nyquist density of $\Omega$. These estimates bear resemblance to the estimates in Abreu and Speckbacher' paper \cite{ABREU2021103032_donoho_logan_large_sieve}, which in large part served as inspiration for our approach.
By an inductive scheme, we find that for a family $\Omega(h)\subseteq \mathbb{R}^{2d}$ of $\nu$-porous sets on scales $h$ to $1$, there exists constants $C,\beta>0$ only dependent on $\nu>0$ (and $d$) such that
\begin{align}
    \| P_{\Omega(h)}\|_{\text{op}}\leq C h^{\beta} \ \ \forall \ \ 0<h\leq 1,
    \label{FUP_Daubechies_intro}
\end{align}
which represents a direct analogue of \eqref{FUP_disentangled_statement}, now in the joint representation. In fact, these estimates extend to norm estimates on the Fock space $\pazocal{F}^{p}(\mathbb{C}^{d})$ for generic $p\geq 1$, which in turn yield an FUP not only for $L^2(\mathbb{R}^d)$ but also for the modulation spaces, $M^{p}(\mathbb{R}^{d})$. For more explicit estimates of the exponent in \eqref{FUP_Daubechies_intro}, we specifically consider multidimensional Cantor set constructions, and here the upper bound asymptotes relies on our ability to \textit{directly} compute the Nyquist density of such sets. 

In addition, we present an FUP for Gabor multipliers, which represents a discrete alternative to Daubechies' localization operator based on Gabor frames (see \cite{Feichtinger_Nowak_2003} for an introduction to Gabor multipliers). Approximation properties of such operators been studied in \cite{Grochenig_2009_Gabor_multipliers}, \cite{Cordero_Grochenig_Nicola_2012}, and spectral properties have been studied \cite{Feichtinger_Nowak_Pap_2015}. For our purpose, we consider the closest comparison to the Daubechies' operator with a Gaussian window. Namely, we consider the case when the generating function of the Gabor multiplier also equals Gaussian.  

The paper is organized as follows: Section \ref{section_preliminaries} contains necessary background theory. This includes, a formal introduction to the Fock space and the connection to the STFT (section \ref{section_STFT_Fock_space_preliminaries}), an introduction to Gabor frames and Gabor multipliers (section \ref{section_Gabor_frames_and_multiplisers}) and a precise description of what we mean by "fractal" with the Cantor set constructions as concrete examples (section \ref{section_porous_sets_Cantor_sets}). The results are divided into three sections \ref{section_FUP_joint_representation}, \ref{section_density_of_cantor_sets} and \ref{section_FUP_Gabor_multipliers}. The general FUP for Fock spaces and modulation spaces are formulated in section \ref{section_FUP_joint_representation}. The next section \ref{section_density_of_cantor_sets} is focused on the multidimensional Cantor set constructions, with an FUP formulated specifically for these sets. In the last section \ref{section_FUP_Gabor_multipliers} we show how the FUP can be translated to Gaussian Gabor multipliers.

\section{Preliminaries}
\label{section_preliminaries}
\subsection{From the Short-Time Fourier Transform to the Fock space}
\label{section_STFT_Fock_space_preliminaries}
Consider some fixed window function $\phi: \mathbb{R}^{d}\to \mathbb{C}$, and introduce the basic operations $T_{x}\phi(t) := \phi(t-x)$ and $M_{\omega}\phi(t) := e^{2\pi i \omega\cdot t}\phi(t)$, i.e., time-translation and frequency-modulation, respectively. The \textit{Short-Time Fourier Transform} (STFT) of some signal $f\in L^2(\mathbb{R}^{d})$, with respect to window $\phi$, evaluated at point $(x,\omega)\in \mathbb{R}^{d}\times \mathbb{R}^{d}$, is then given by the inner product \begin{align*}
    V_{\phi}f(x,\omega) :=\langle f, M_{\omega}T_{x}\phi\rangle.
\end{align*}
Observe that if $\phi \equiv 1$, the STFT coincides with the regular Fourier transform. For non-constant windows, however, we obtain a joint time-frequency description of our signal. Furthermore, for $\|\phi\|_2 =1$, the STFT becomes an \textit{isometry} onto some subspace of $L^2(\mathbb{R}^{2d})$, i.e., $\langle V_{\phi}f, V_{\phi}g\rangle_{L^2(\mathbb{R}^{2d})} = \langle f, g\rangle$. In this case, we have an inversion formula, namely
\begin{align}
    f = \int_{\mathbb{R}^{2d}}V_{\phi}f(x,\omega)M_{\omega}T_{x}\phi \ \mathrm{d}x\mathrm{d}\omega,
    \label{STFT_inversion_formula}
\end{align}
where the integral is interpreted in the weak-sense. \textit{Daubechies' time-frequency localization operator}, $P_{S}^{\phi}:L^2(\mathbb{R}^{d})\to L^2(\mathbb{R}^{d})$, with some bounded symbol $S$, is then obtained by modifying the above integrand by the multiplicative weight function $S(x,\omega)$, i.e.,
\begin{align}
    &P_{S}^{\phi}f := \int_{\mathbb{R}^{2d}}S(x,\omega)\cdot V_{\phi}f(x,\omega) M_{\omega}T_{x}\phi \ \mathrm{d}x\mathrm{d}\omega\nonumber \\
    \iff \langle &P_{S}^{\phi}f,g\rangle = \langle S\cdot V_{\phi}f, V_{\phi}g\rangle_{L^2(\mathbb{R}^{2d})} \ \ \forall \ \ g\in L^2(\mathbb{R}^{d}).\label{Daubechies_localization_operator_inner_product_df}
\end{align}
This could equivalently be viewed as modifying the resulting STFT by multiplication by $S$ before inversion. Oftentimes, we consider $S = \chi_{\Omega}$, i.e., the characteristic function of a subset $\Omega$ of the phase space $\mathbb{R}^{2d}$, so that the operator $P_{\Omega}^{\phi}:=P_{\chi_{\Omega}}^{\phi}$ is interpreted as projecting signals onto said time-frequency domain. The associated operator norm $\| P_{\Omega}^{\phi}\|_{\text{op}}$ then measures the optimal localization on $\Omega$.

A popular choice for window function is the \textit{Gaussian} function, which on $\mathbb{R}^{d}$ reads
\begin{align*}
    \phi_0(x) := 2^{d/4}e^{-\pi x^2}, \ \ \text{where } x^2 = x_1^2+\dots+x_d^2 \ \ \text{for } \ x =(x_1,\dots,x_d). 
\end{align*}
With this window choice, we can, in fact, replace Daubechies' operator by a Toeplitz operator on the Bargmann-Fock space or simply Fock space, $\pazocal{F}^{2}(\mathbb{C}^d)$. 

We reference Zhu's book \cite{Zhu_Analysis_on_Fock_spaces} for a detailed introduction to Fock spaces in $\mathbb{C}$.   
For a complex vector $z = x+i\omega = (z_1, \dots, z_{d})\in\mathbb{C}^d$, we distinguish between $z^2 = z\cdot z = z_1^2+\dots+z_{d}^2$ and $|z|^2 = z\cdot \overline{z}= |z_1|^2+\dots+|z_{d}|^2$. Now, for arbitrary $p\geq 1$, let $\mathrm{d}\mu_{p}(z) := e^{-\frac{p}{2}\pi |z|^2} \mathrm{d}A(z)$ denote the Gaussian measure on $\mathbb{C}^{d}$, where $\mathrm{d}A(z)$ is the volume measure $\mathrm{d}x\mathrm{d}\omega$. The associated $L^p$-space, $L^p(\mathbb{C}^d,\mathrm{d}\mu_p)$, is simply denoted by $\pazocal{L}^{p}(\mathbb{C}^d)$. The \textit{Fock space} $\pazocal{F}^{p}(\mathbb{C}^{d})$ is then defined as the Banach space of entire functions $F\in \pazocal{L}^{p}(\mathbb{C}^d)$, with norm\footnote{For consistent and simple notation, we denote the norm in the Fock space by $\|\cdot\|_{\pazocal{L}^p}$ rather than $\|\cdot\|_{\pazocal{F}^p}$. In particular, this is to avoid switching notation for functions in $\pazocal{L}^p(\mathbb{C}^d)\setminus \pazocal{F}^p(\mathbb{C}^d)$, e.g., when we consider $F\cdot \chi_{\Omega}$ for $F\in \pazocal{F}^p(\mathbb{C}^d)\setminus\{0\}$ and $\Omega\subsetneq \mathbb{C}^d$.}
\begin{align*}
    &\|F\|_{\pazocal{L}^{p}} = \left(\int_{\mathbb{C}^{d}}|F(z)|^{p} e^{-\frac{p}{2}\pi |z|^2}\mathrm{d}A(z)\right)^{1/p}<\infty.
\intertext{For $p=\infty$, we let $\pazocal{L}^{\infty}(\mathbb{C}^d)$ denote the space of measurable functions on $
\mathbb{C}^{d}$ such that}
    &\| F \|_{\pazocal{L}^{\infty}} = \esssup \left\{|F(z)| e^{-\frac{\pi}{2}|z|^2} \ \big| \ z\in \mathbb{C}^{d}\right\}< \infty.
\end{align*}
Again, the Fock space $\pazocal{F}^{\infty}(\mathbb{C}^{d})$ is the Banach space of entire functions in $\pazocal{L}^{\infty}(\mathbb{C}^{d})$.
For $p=2$, we find that the Fock space forms a reproducing kernel Hilbert space, with reproducing kernel $K_\xi(z) = e^{\pi z\cdot \overline{\xi}}$ so that $\langle F, K_{\xi} \rangle_{\pazocal{L}^2} = F(\xi)$. Utilizing this kernel, we obtain an orthogonal projection $P: \pazocal{L}^2(\mathbb{C}^{d})\to \pazocal{F}^2(\mathbb{C}^{d})$, defined by
\begin{align*}
    P F(z) := \int_{\mathbb{C}^{d}}K_{\xi}(z) F(\xi) e^{-\pi |\xi|^2}\mathrm{d}A(\xi). 
\end{align*}
For a bounded measurable function $S:\mathbb{C}^{d}(\cong \mathbb{R}^{2d})\to\mathbb{C}$, we define the \textit{Toeplitz operator} $\pazocal{T}_{S}:\pazocal{F}^2(\mathbb{C}^{d})\to\pazocal{F}^2(\mathbb{C}^{d})$, with symbol $S$, by
\begin{align*}
    \pazocal{T}_S F(z) := P(S F)(z) = \int_{\mathbb{C}^{d}}S(\xi)\cdot K_{\xi}(z)F(\xi)e^{-\pi |\xi|^2}\mathrm{d}A(\xi).
\end{align*}
If we initially consider test functions $G\in \pazocal{L}^1(\mathbb{C}^d)\cap \pazocal{F}^2(\mathbb{C}^d)$, then, by Fubini's theorem and the reproducing property of the kernel $K_{\xi}(z)$, the inner product attains the simple form
\begin{align}
    \langle \pazocal{T}_{S}F,G\rangle_{\pazocal{L}^2} = \langle S\cdot F, G\rangle_{\pazocal{L}^2}
    \label{Toeplitz_operator_inner_product}
\end{align}
By a density argument, it follows that \eqref{Toeplitz_operator_inner_product} holds for all $G\in \pazocal{F}^2(\mathbb{C}^d)$, more akin to the inner product \eqref{Daubechies_localization_operator_inner_product_df}.

The precise connection to Daubechies' time-frequency operator with a Gaussian window is established through the Bargmann transform. The \textit{Bargmann transform}, first introduced in \cite{Bargmann_1961}, is an isometric isomorphism $\pazocal{B}:L^2(\mathbb{R}^{d})\to \pazocal{F}^2(\mathbb{C}^{d})$, given by
\begin{align}
    \pazocal{B}f(z) := 2^{d/4}\int_{\mathbb{R}^d}f(t)e^{2\pi t\cdot z-\pi t^2-\frac{\pi}{2}z^2}\mathrm{d}t.
    \label{Bargmann_transform_def}
\end{align}
The transform is related to the STFT with Gaussian window via the following identity (see Proposition 3.4.1 in \cite{GrochenigKarlheinz2001Fota})
\begin{align}
    V_{\phi_0}f(x,-\omega) = e^{\pi i x\cdot \omega}\pazocal{B}f(z)e^{-\frac{\pi}{2}|z|^2} \ \ \text{for } z = x+i\omega.
    \label{Bargmann_transform_STFT_identity}
\end{align}
Define $S^{c}(z) := S(\overline{z})$, so that $\langle P_{S}^{\phi_0} f,g\rangle$ $= \langle \pazocal{T}_{S^c} \pazocal{B}f,\pazocal{B}g\rangle_{\pazocal{L}^2}$, from which the one-to-one correspondence between Daubechies' operator $P_{S}^{\phi_0}$ and the Toeplitz operator $\pazocal{T}_{S^{c}}$ is evident. In particular, we have that their norms coincide. 

In the subsequent discussion, we shall consider Toeplitz operators projecting onto $\Omega\subseteq \mathbb{C}^d$, which we will simply denote by $\pazocal{T}_{\Omega} = \pazocal{T}_{\chi_\Omega}$. By \eqref{Toeplitz_operator_inner_product}, the operator norm is given by
\begin{align}
    \|\pazocal{T}_{\Omega}\|_{\text{op}} = \sup_{\|F\|_{\pazocal{F}^2}=1} \int_{\Omega} |F(z)|^{2} e^{-\pi |z|^2}\mathrm{d}A(z).
\end{align}
For general $p\geq 1$, we consider upper bounds for the quantity 
\begin{align}
   &\frac{\| F\cdot \chi_{\Omega}\|_{\pazocal{L}^p}^p}{\| F\|_{\pazocal{L}^p}^p} =  \frac{\int_{\Omega}|F(z)|^{p}e^{-\frac{p}{2}\pi |z|^2}\mathrm{d}A(z)}{\| F\|_{\pazocal{L}^{p}}^p} \ \ \text{with } \ F\in \pazocal{F}^p(\mathbb{C}^d)\setminus\{0\}.\nonumber
\intertext{By the use of complex interpolation (see Appendix \ref{appendix_complex_interpolation} for details), the above quotients are actually bounded by the estimate for $p=1$, i.e.,}
    &\sup_{F\in \pazocal{F}^p(\mathbb{C}^d)\setminus \{0\}} \frac{\|F\cdot \chi_{\Omega}\|_{\pazocal{L}^p}^p}{\|F\|_{\pazocal{L}^{p}}^p} \leq \sup_{F\in \pazocal{F}^1(\mathbb{C}^d)\setminus \{0\}} \frac{\|F\cdot \chi_{\Omega}\|_{\pazocal{L}^1}}{\|F\|_{\pazocal{L}^{1}}},
    \label{complex_interpolation_inequality_reference}
\end{align}
which allows for estimates \textit{without} any $p$-dependence.
Unsurprisingly, the added structure provided by the Fock space turns out to be beneficial when estimating the norm. Namely, we shall exploit subaveraging properties of \textit{subharmonic} functions.  

\subsection{Gabor frames and Gabor multipliers}
\label{section_Gabor_frames_and_multiplisers}
In general, a family of vectors $\{\phi_\lambda\}_{\lambda\in \Lambda}$ in the Hilbert space $\pazocal{H}$ is called a \textit{frame} if there exist constants $0<A\leq B<\infty$, i.e., frame bounds, such that 
\begin{align*}
    A \| f\|^2 \leq \sum_{\lambda\in\Lambda} |\langle f,\phi_{\lambda}\rangle|^2 \leq B \| f\|^2 \ \ \forall \ \ f\in\pazocal{H}.
\end{align*}
The associated frame operator $\pazocal{S}:\pazocal{H}\to \pazocal{H}$ is given by $\pazocal{S}f := \sum_{\lambda}\langle f,\phi_{\lambda}\rangle \phi_{\lambda}$ with norm $\|\pazocal{S}\|_{\pazocal{H}}\in [A,B]$.
If $A=B$, the frame is called a \textit{tight frame}. By renormalizing the vectors $\phi_{\lambda}\mapsto \phi_{\lambda}/\sqrt{A}$, any tight frame can be turned into a \textit{Parseval frame}, i.e., $A=B=1$, where we also have the representation $f = \sum_{\lambda}\langle f,\phi_{\lambda}\rangle \phi_\lambda$, i.e., $\pazocal{S} = \text{id}$. 

The \textit{Gabor frame} for $L^{2}(\mathbb{R}^{d})$ is based on the idea of discretizing the STFT inversion formula \eqref{STFT_inversion_formula}. 
For this purpose consider a lattice $\Lambda\subseteq \mathbb{R}^{2d}$ of sampled points. Oftentimes, we consider rectangular lattices of the form $\Lambda =  a\mathbb{Z}^d\times b\mathbb{Z}^d$ with parameters $a,b>0$. Further, fix a window function $\phi \in L^2(\mathbb{R}^{d})\setminus \{0\}$ and define the time-frequency shifts $\pi(x,\omega)\phi := M_{\omega}T_{x}\phi$. If the family of time-frequency shifts $\{\pi(\lambda)\phi\}_{\lambda\in\Lambda}$ forms a frame, we call this system a Gabor frame with generating function $\phi$ over lattice $\Lambda$. We may also include a normalization factor based on the density of the lattice. More precisely, to each lattice we can associate a connected neighbourhood of the origin $A_{\Lambda}$ called the fundamental region such that $\cup_{\lambda\in \Lambda}(\lambda+A_{\Lambda})=\mathbb{R}^{2d}$ and $|(\lambda + A_{\Lambda})\cap (\xi + A_{\Lambda})|=0$ whenever $\lambda\neq \xi \in \Lambda$. With the normalization $\sqrt{|A_{\Lambda}|}\{\pi(\lambda)\phi\}_{\lambda\in \Lambda}$, the associated frame operator $\pazocal{S}_{\Lambda}^{\phi}:L^2(\mathbb{R}^d)\to L^2(\mathbb{R}^d)$ reads
\begin{align}
    \pazocal{S}_{\Lambda}^{\phi}f = |A_{\Lambda}|\sum_{\lambda\in \Lambda} \langle f, \pi(\lambda)\phi\rangle \pi(\lambda)\phi.
    \label{Gabor_frame_operator_normalized}
\end{align}
We immediately recognize $\pazocal{S}_{\Lambda}^{\phi}f$ as a Riemann sum of the integral \eqref{STFT_inversion_formula}.
Thus, for a sequence of Gabor frames $\sqrt{|A_{\Lambda_n}}|\{\pi(\lambda)\phi\}_{\lambda\in\Lambda_n}$ where $A_{\Lambda_n}\to 0$ as $n\to \infty$, we expect the discretization \eqref{Gabor_frame_operator_normalized} to converge weakly to the integral \eqref{STFT_inversion_formula} and the frame bounds to tighten. Notably for the sequence of square lattices $\Lambda_{n} = \frac{1}{n}\left(\mathbb{Z}\times \mathbb{Z}\right)$ and $f,\phi$ in Feichtinger's algebra $M^{1}(\mathbb{R}^d)$, Weisz shows in \cite{F_Weisz_2007} that $S^{\phi}_{\Lambda_{n}}f$ converges to $f$ in the $M^{1}$-norm as $n\to \infty$. While $\phi$ remains in $M^1(\mathbb{R}^d)$, this result is extended to $f$ in the modulation space $M^{p,q}(\mathbb{R}^d)$ with convergence in the $M^{p,q}$-norm. In particular, we have convergence in the $L^2$-norm when $f\in L^2(\mathbb{R}^d)$.  

The \textit{Gabor multiplier}, $\mathcal{G}_{\Lambda,b}^{\phi}:L^2(\mathbb{R}^d)\to L^2(\mathbb{R}^d)$, represents a discretization of Daubechies' operator in \eqref{Daubechies_localization_operator_inner_product_df}, where the sum \eqref{Gabor_frame_operator_normalized} is weighted by a bounded symbol $b$ defined on the lattice $\Lambda$, i.e.,
\begin{align*}
    \mathcal{G}_{\Lambda,b}^{\phi}f := |A_{\Lambda}|\sum_{\lambda\in \Lambda} b(\lambda) \langle f,\pi(\lambda)\phi\rangle \pi(\lambda)\phi.
\end{align*}
For localization on a specific subset $\Omega\subseteq \mathbb{R}^{2d}$, we shall consider Gabor symbols $b$ that mimic the behaviour of the characteristic function $\chi_{\Omega}$.
One natural option is to consider the portion of a lattice point region $\lambda+A_{\Lambda}$ containing the subset $\Omega$, i.e.,
\begin{align*}
    b_{\Omega}(\lambda):= \frac{|\Omega \cap (\lambda + A_{\Lambda})|}{|A_{\Lambda}|}\in [0,1].
\end{align*}
Alternatively, we only distinguish between whether the lattice point region $\lambda + A_{\Lambda}$ contains a non-zero part of $\Omega$. That is, we apply the ceiling function $\ceil{\cdot}$, rounding up to the nearest integer, so that
\begin{align*}
    \ceil{b_{\Omega}(\lambda)}\in \{0,1\}.
\end{align*}
Evidently $0\leq b_{\Omega}(\lambda)\leq \ceil{b_{\Omega}(\lambda)}$, from which it is easily verified that the operator norms also satisfy $\|\mathcal{G}_{\Lambda,b_\Omega}^{\phi}\|_{\text{op}} \leq \|\mathcal{G}_{\Lambda,\ceil{b_\Omega}}^{\phi}\|_{\text{op}}$. Thus, when estimating upper bounds for the operator norm in section \ref{section_FUP_Gabor_multipliers}, we only consider the second symbol suggestion, $\ceil{b_{\Omega}}$. Notice that utilizing symbol $\ceil{b_{\Omega}}$ is the same as restricting the summation \eqref{Gabor_frame_operator_normalized} to a subset of the lattice $\Lambda$, namely
\begin{align}
   \Lambda_{\Omega}:= \big\{\lambda \in \Lambda \ \big| \ |\Omega\cap (\lambda+A_{\Lambda})|>0\big\}. 
   \label{lattice_restriction_to_set}
\end{align}
For simplicity, we denote the Gabor multiplier with symbol $\ceil{b_{\Omega}}$ by $\mathcal{G}^{\phi}_{\Lambda, \Omega}$, which, by the above observation, is given by
\begin{align}
    \mathcal{G}_{\Lambda,\Omega}^{\phi}f = |A_{\Lambda}|\sum_{\lambda\in\Lambda_{\Omega}}\langle f, \pi(\lambda)\phi\rangle \pi(\lambda)\phi.
    \label{Gabor_multiplier_subset_formula}
\end{align}

\subsection{Porous sets and Cantor sets}
\label{section_porous_sets_Cantor_sets}
We shall define "fractal sets" in terms of the general notion of $\nu$-porosity. It is based on Definition 2.7 in \cite{DyatlovFUP2019}, adjusted to higher dimensions. Informally, in order for a set to be classified as porous, we require the set to contain gaps or pores within certain scale bounds.   
\begin{definition}
\label{definition_nu_porosity}
\textnormal{($\nu$-porosity)} Suppose $0<\nu<1$, $0\leq \alpha_{\min}\leq \alpha_{\max}\leq \infty$ and $\Omega\subseteq \mathbb{R}^{d}$ is closed. We say that $\Omega$ is $\nu$-porous on scales $\alpha_{\min}$ to $\alpha_{\max}$ if for every ball $B_{r}(x)$ of radius $r\in [\alpha_{\min},\alpha_{\max}]$ there exists a ball $B_{\nu r}(y)\subseteq B_{r}(x)$ of radius $\nu r$ such that $|\Omega\cap B_{\nu r}(y)|=0$.
\end{definition}
Notice that in one dimension, the $\nu$-porous set resembles a Cantor type set, where we are able to remove a $\nu$-portion of any interval inductively down to the lower bound scale. The Cantor sets represent a popular and easy to understand family of fractal sets, which we construct as follows: 
\\\\
Let $M>1$ be a fixed integer, and let $\mathcal{A}$ be a non-empty proper subset of $\{0,1,\dots,M-1\}$. The $n$-iterate ($n$-order) \textit{discrete Cantor set} with base $M$ and alphabet $\mathcal{A}$ is then defined as 
\begin{align*}
    \pazocal{C}_{n}^{(d)}(M,\mathcal{A}):=\left\{ \sum_{j=0}^{n-1}a_j M^j \ \Big| \ a_j\in \mathcal{A}, \ j=0,1,\dots,n-1 \right\} \subseteq \{ 0,1,\dots,M^n-1\}.
\end{align*}
The "continuous" $n$-iterate Cantor set based in the interval $[0,L]$ is given by
\begin{align*}
    \pazocal{C}_n(L,M,\mathcal{A}) := LM^{-n}\cdot\pazocal{C}_{n}^{(d)}(M,\mathcal{A})+[0,LM^{-n}] \ \ \text{for } \ n=0,1,2,\dots 
\end{align*}
The iterates are nested, i.e., $\pazocal{C}_{n+1}\subseteq \pazocal{C}_{n}$, and the (limit) Cantor set is then given by the intersection of all the $n$-iterates.
While the Cantor set itself has measure zero, each $n$-iterate does not. 
If we let $|\mathcal{A}|$ denote the cardinality of the alphabet $\mathcal{A}$, the measure of the $n$-iterate Cantor set is given by
\begin{align*}
    |\pazocal{C}_n(L,M,\mathcal{A})| = \left(\frac{|\mathcal{A}|}{M}\right)^n L.
\end{align*} 
Note that for $M=3$ and $\mathcal{A} = \{ 0,2\}$, we obtain the standard mid-third $n$-iterate Cantor set, with measure $(2/3)^n L$.

Unsurprisingly, the Cantor sets are indeed $\nu$-porous. Below we present a simple estimate for the porosity constant and the scales (see Appendix \ref{appendix_porosity_estimate_cantor_set} for details):

\begin{lemma}
\label{lemma_Cantor_set_porosity}
The $n$-iterate Cantor set $\pazocal{C}_n(L,M,\mathcal{A})$ with base $M>1$ and alphabet size $|\mathcal{A}|<M$, based in the interval $[0,L]$, is $\nu$-porous on scales $M^{-n+1}$ to $\infty$, with any $\nu \leq M^{-2}$. 
\end{lemma}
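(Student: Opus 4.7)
The plan is to produce, for every ball $B_r(x)\subset\mathbb{R}$ with $r\ge M^{-n+1}$, a sub-ball $B_{\nu r}(y)\subseteq B_r(x)$ with $|B_{\nu r}(y)\cap\pazocal{C}_n(L,M,\mathcal{A})|=0$, taking $\nu=M^{-2}$. Since the Cantor iterates scale linearly with $L$ and porosity is invariant under dilation, I may assume $L=1$ throughout, and split by the magnitude of $r$.

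First I would dispose of the large-scale regime $r\ge 1$: since $\pazocal{C}_n\subseteq[0,1]$, the ball $B_r(x)$ necessarily extends beyond $[0,1]$, and the longer of the two external arcs $B_r(x)\setminus[0,1]$ has length at least $r-1/2\ge r/2$, which comfortably houses a ball of diameter $2\nu r\le r/2$ (using $\nu\le 1/4\le M^{-2}$ for $M\ge 2$). For the intermediate range $M^{-n+1}\le r<1$, I would select the unique integer $k\in\{1,\dots,n\}$ with $M^{-k+1}\le r<M^{-k+2}$, so that the diameter $2r\ge 2M^{-k+1}$ forces $B_r(x)$ to contain a full base-$M$ grid cell $I=[iM^{-k+1},(i+1)M^{-k+1}]$ from the Cantor construction at level $k-1$.

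I would then analyze the Cantor structure on $I$: if $I$ is not contained in $\pazocal{C}_{k-1}$ (either $i\notin\pazocal{C}_{k-1}^{(d)}$ or $I\not\subseteq[0,1]$), then $I$ itself is disjoint from $\pazocal{C}_n$ and provides a hole of length $M^{-k+1}$; otherwise $I\subseteq\pazocal{C}_{k-1}$, and the refinement to $\pazocal{C}_k$ removes at least one of the $M$ sub-cells of $I$ since $|\mathcal{A}|<M$, exposing a hole of length $M^{-k}$ inside $I$. The fit $B_{\nu r}(y)\subseteq$ hole is immediate in the first subcase, since $2\nu r<2M^{-k}\le M^{-k+1}$; and in the second subcase it reduces to $2\nu r\le M^{-k}$, which holds directly on the lower half of the scale range, $r\le M^{-k+2}/2$.

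The main obstacle is the narrow upper sub-range $r\in(M^{-k+2}/2,M^{-k+2})$, where a single $M^{-k}$-gap no longer fits $B_{\nu r}(y)$. Here I would exploit the combinatorial constraint $|\mathcal{A}|<M$: no $M$ consecutive positions can lie entirely in $\pazocal{C}_{k-1}^{(d)}$, so the multiple level-$(k-1)$ cells that $B_r(x)$ now spans must include either an outright level-$(k-1)$ hole (returning to the first subcase with the much larger gap $M^{-k+1}$), or an aligned run of $M-1$ cells in $\pazocal{C}_{k-1}^{(d)}$ bordered on both sides by level-$(k-1)$ holes, in which case the $M^{-k}$-gap at one edge of the run combines across the cell boundary into a total gap of length $(M+1)M^{-k}$, amply sufficient for $B_{\nu r}(y)$ since $2\nu r<2M^{-k}<(M+1)M^{-k}$. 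The pigeonhole-style bookkeeping that verifies $\nu=M^{-2}$ works uniformly in all these subcases (and that the extension of $B_r(x)$ beyond the aligned run is long enough to reach the bordering hole) is what makes the constant tight and constitutes the delicate core of the argument.
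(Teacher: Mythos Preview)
Your overall strategy is sound and the case split is reasonable, but the crucial narrow sub-range $r \in (M^{-k+2}/2,\, M^{-k+2})$ contains a genuine error. The claimed mechanism --- that ``the $M^{-k}$-gap at one edge of the run combines across the cell boundary'' --- fails whenever $\{0,\,M-1\}\subseteq\mathcal{A}$: for instance with $M=5$, $\mathcal{A}=\{0,1,3,4\}$, the level-$k$ gap inside each cell sits at digit position $2$, away from both cell boundaries, so no combining can occur. The approach is nonetheless salvageable by a different (and simpler) mechanism: once the $M-1$ full level-$(k-1)$ cells form a maximal run bordered by level-$(k-1)$ holes, the extensions of $B_r(x)$ into those holes satisfy $\epsilon+\epsilon' = 2r-(M-1)M^{-k+1} > M^{-k+1}$, hence $\max(\epsilon,\epsilon') > M^{-k+1}/2$; the inequality $r(1-2M^{-2}) \geq \tfrac{1}{2}(M-1)M^{-k+1}$ (valid for all $M\geq 2$ in this range of $r$) then gives $\max(\epsilon,\epsilon') \geq 2\nu r$ directly, with no combining needed. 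So the deferred ``pigeonhole bookkeeping'' hides the actual mechanism, and your hint about how it would go is misleading.

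The paper's proof is considerably simpler and avoids your grid-cell case analysis altogether. It works directly with the interval $I=B_r(x)$ and distinguishes only whether $I\cap\pazocal{C}_m$ is connected. If not, $I$ contains an internal gap of $\pazocal{C}_m$, necessarily of length at least $M^{-m}\geq M^{-2}|I|$. If so, then $I\cap\pazocal{C}_m$ lies in a single maximal interval of $\pazocal{C}_m$, and such intervals have length at most $|\mathcal{A}|\,M^{-m}\leq (M-1)M^{-m}$; hence the complement $I\setminus\pazocal{C}_m$ splits into two pieces whose larger one has length at least $\tfrac{1}{2}\bigl(|I|-(M-1)M^{-m}\bigr)\geq M^{-2}|I|$. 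This two-case dichotomy makes $\nu=M^{-2}$ fall out from a single inequality, without any sub-range splitting.
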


In multiple dimensions $\mathbb{R}^{2d}$, we consider two possible Cantor set constructions:
\begin{enumerate}
    \item For a ball of radius $R>0$ centered at the origin, we consider the \textit{radially symmetric} $n$-iterate Cantor set as a subset of the form 
\begin{align*}
    \mathcal{C}_n^{2d}(R,M,\mathcal{A}) := \left\{ (x_1,\dots, x_{2d})\in\mathbb{R}^{2d} \ \big| \ \left(x_1^2+\dots+x_{2d}^2\right)^d\in \pazocal{C}_n(R^{2d},M,\mathcal{A})  \right\}.
\end{align*}
In particular, localizing on $\mathcal{C}_n^{1}(R,M,\mathcal{A})$ has been discussed extensively in \cite{HelgeKnutsen_2022}.
In general, these radially symmetric Cantor iterates are constructed such that all annuli that make up the set have the \textit{same} measure. The total measure is given by 
\begin{align*}
    |\mathcal{C}_n^{2d}(R,M,\mathcal{A})|=\left|\pazocal{C}_n\left((\pi R^2)^d/d!,M,\mathcal{A}\right)\right|=\left(\frac{|\mathcal{A}|}{M}\right)^n\frac{(\pi R^2)^d}{d!},
\end{align*}
where we recognize $\frac{(\pi R^2)^d}{d!}$ as the volume of the $2d$-dimensional ball of radius $R$.
    \item Alternatively, we can consider the \textit{Cartesian product} of 1-dimensional $n_j$-iterate Cantor sets, $\{\pazocal{C}_{n_j}(L_j,M_j,\mathcal{A}_j)\}_j$, that is,
\begin{align*}
    \pazocal{C}_{n_1}(L_1,M_1,\mathcal{A}_1)\times \pazocal{C}_{n_2}(L_2,M_2,\mathcal{A}_2)\times \dots \times \pazocal{C}_{n_{2d}}(L_{2d},M_{2d},\mathcal{A}_{2d}).
\end{align*}
If all iterates coincide, the above Cartesian product reduces to $\pazocal{C}_n(L, M,\mathcal{A})^{2d}$ based in the hypercube $[0,L]^{2d}$.

\end{enumerate}

\begin{remark}
In \cite{DyatlovFUP2019} fractal sets are originally defined in terms of $\delta$-regularity for some $0<\delta<1$ (see Definition 2.2 in \cite{DyatlovFUP2019}). While only formulated in $1$-dimension, the notion of $\delta$-regularity can also be extended to higher dimensions. Compared to $\nu$-porosity, this concept offers a different perspective on fractal sets: For instance with regard to the $n$-iterate Cantor set $\pazocal{C}_n(L,M,\mathcal{A})$, we find that the $\delta$ corresponds to fractal dimension (or Hausdorff dimension) of the iterate, namely $\frac{\ln |\mathcal{A}|}{\ln M}$. However, the notion of $\delta$-regularity might appear more abstract than $\nu$-porosity as it does not immediately read as a set containing gaps, and less so the size of those gaps. Nonetheless, as shown in \cite{DyatlovFUP2019} Proposition 2.10, any $\delta$-regular set is $\nu$-porous, and the scales associated to $\delta$-regularity coincide with the $\nu$-porous scales up to multiplicative constants. Thus, formulating the FUP on $\nu$-porous sets directly translates to an FUP on $\delta$-regular sets.   
\end{remark}

\section{Fractal Uncertainty Principle in Joint Representation}
\label{section_FUP_joint_representation}
In this section we present the FUP for the joint time-frequency representation. Initially, in section \ref{section_FUP_Fock_space} we derive the FUP for the Fock spaces. In the subsequent section \ref{section_FUP_modulation_spaces} we introduce the modulation spaces, and translate the FUP for the Fock spaces to an uncertainty principle for the STFT on modulation spaces.
\subsection{Fractal Uncertainty Principle for Fock spaces}
\label{section_FUP_Fock_space}
\begin{theorem}
\label{theorem_FUP_Fock_space}
\textnormal{(FUP for Fock Spaces $\pazocal{F}^{p}(\mathbb{C}^{d})$)}
Let $0<h\leq 1$, and suppose $\Omega(h)\subseteq \mathbb{C}^{d}$ is an $h$-dependent family of sets which is $\nu$-porous on scales $h$ to $1$. Then for all $p\geq 1$ and all $F\in \pazocal{F}^{p}(\mathbb{C}^d)\setminus\{0\}$ there exist constants $C, \beta >0$ only dependent on $\nu$ (and $d$) such that
\begin{align}
    \frac{\|F\cdot \chi_{\Omega(h)}\|_{\pazocal{L}^p}^p}{\|F\|_{\pazocal{L}^p}^p} \leq C h^{\beta} \ \ \forall \ \ 0<h\leq 1.&
\intertext{In particular, for $p=2$, the Toeplitz operator $\pazocal{T}_{\Omega(h)}$ satisfies} 
    \| \pazocal{T}_{\Omega(h)}\|_{\textnormal{op}} \leq C h^{\beta} \ \ \forall \ \ 0<h\leq 1.&
\end{align}
\end{theorem}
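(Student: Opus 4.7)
My plan is to follow the roadmap hinted at in the introduction: first, reduce the general $p\geq 1$ statement to the case $p=1$ via the complex interpolation inequality \eqref{complex_interpolation_inequality_reference}, and then bound the quotient $\|F\cdot \chi_{\Omega(h)}\|_{\pazocal{L}^1}/\|F\|_{\pazocal{L}^1}$ uniformly in $F\in\pazocal{F}^1(\mathbb{C}^d)\setminus\{0\}$ by exploiting the subaveraging property of entire functions together with the $\nu$-porosity of $\Omega(h)$ in an iterative multiscale argument inspired by \cite{ABREU2021103032_donoho_logan_large_sieve}. The Toeplitz-operator statement at $p=2$ will then follow from the variational characterization of $\|\pazocal{T}_{\Omega(h)}\|_{\textnormal{op}}$.

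The first ingredient is a weighted subaveraging lemma: for every $r>0$ there is a constant $c_{r}>0$, independent of $z\in\mathbb{C}^d$, such that
\[
    |F(z)|\, e^{-\frac{\pi}{2}|z|^2} \leq c_r \int_{B_r(z)} |F(w)|\, e^{-\frac{\pi}{2}|w|^2}\, dA(w).
\]
This is the standard subaveraging inequality on $\pazocal{F}^1(\mathbb{C}^d)$, obtained from the plurisubharmonicity of $|F|$ via a translation by the reproducing-kernel isometry of Fock space. Integrating the pointwise bound over a larger ball should yield the reverse integral comparison I actually need: whenever $B_{\nu r}(z_1)\subseteq B_r(z_0)$,
\[
    \int_{B_r(z_0)} |F(w)|\, e^{-\frac{\pi}{2}|w|^2}\, dA(w) \leq K_\nu \int_{B_{\nu r}(z_1)} |F(w)|\, e^{-\frac{\pi}{2}|w|^2}\, dA(w),
\]
with $K_\nu$ depending only on $\nu$ and $d$.

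Given this, the heart of the proof is a \emph{one-scale gain}: for any ball $B_r(z_0)$ with $r\in[h,1]$, $\nu$-porosity supplies a sub-ball $B_{\nu r}(z_1)\subseteq B_r(z_0)$ disjoint (in measure) from $\Omega(h)$, and the reverse inequality immediately gives
\[
    \int_{B_r(z_0)\cap \Omega(h)} |F(w)|\, e^{-\frac{\pi}{2}|w|^2}\, dA(w) \leq \bigl(1-K_\nu^{-1}\bigr)\int_{B_r(z_0)} |F(w)|\, e^{-\frac{\pi}{2}|w|^2}\, dA(w),
\]
so a strictly positive fraction $\eta(\nu):=K_\nu^{-1}$ of the local mass is eliminated at every admissible scale. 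I would then iterate by first covering $\mathbb{C}^d$ by balls of radius $1$ and applying the gain on each, then refining every ball into balls of radius $\nu$ and applying the gain again, and so on, continuing as long as the current radius $\nu^n$ stays above $h$. After $n\asymp \log(h^{-1})/\log(\nu^{-1})$ generations, summing the local losses and telescoping yields $\|F\cdot \chi_{\Omega(h)}\|_{\pazocal{L}^1}\leq (1-\eta(\nu))^n \|F\|_{\pazocal{L}^1}\leq Ch^\beta \|F\|_{\pazocal{L}^1}$ with $\beta := -\log(1-\eta(\nu))/\log(\nu^{-1})>0$, and \eqref{complex_interpolation_inequality_reference} then lifts the bound to all $p\geq 1$.

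The main obstacle is showing that the reverse integral inequality actually holds with a constant $K_\nu$ that is \emph{uniform in the ball radius} $r\in[h,1]$ and in the location of the sub-ball inside the larger ball. A naive application of subaveraging produces constants that deteriorate as $r$ shrinks, because the Gaussian weight has a fixed length scale of order one. The workaround, I expect, is to use the Fock-space translation isometries to recenter the larger ball at the origin and to absorb the weight into an auxiliary entire function, so that the remaining constants depend only on the geometric ratio $\nu$ and the dimension $d$. A secondary technical nuisance is arranging the multiscale covers so that the telescoping really accumulates into the advertised $h^\beta$-bound without losing the exponent to covering overlaps at each level.
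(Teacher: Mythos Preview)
Your proposal contains a genuine gap: the ``reverse integral inequality'' you need is simply false, and no amount of recentering by Fock-space translation isometries will rescue it. Take $d=1$, $z_0=0$, $r=1$, $\nu\leq 1/4$, $z_1=1-\nu$, and $F(z)=(z-z_1)^N$. On $B_\nu(z_1)$ the weighted integrand is at most $|z-z_1|^N$, so the integral is at most $2\pi\nu^{N+2}/(N+2)$. On the other hand, on the sub-ball $B_{1/4}(-1/2)\subset B_1(0)$ one has $|z-z_1|\geq 1$ and $e^{-\frac{\pi}{2}|z|^2}\geq e^{-\pi/2}$, so the integral over $B_1(0)$ is bounded below by a positive constant independent of $N$. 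Hence the ratio blows up as $N\to\infty$, and no finite $K_\nu$ can work uniformly in $F$. The point is that entire functions can vanish to arbitrarily high order at any prescribed point, so the weighted mass on a small ball can be an arbitrarily small fraction of the mass on the containing ball. Your ``one-scale gain'' therefore does not hold, and the iteration built on it collapses.

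The paper's argument is structurally quite different and avoids ever comparing the mass on a ball to the mass on a sub-ball. Instead, it combines the subaveraging lemma with Fubini to bound $\int_\Omega |F|e^{-\frac{\pi}{2}|\cdot|^2}$ by the maximal Nyquist density $\rho(\Omega,R)$ times $\int_{\Omega_R}|F|e^{-\frac{\pi}{2}|\cdot|^2}$, where $\Omega_R=\Omega+B_R(0)$ is the \emph{thickened} set (so the right-hand integral is over a \emph{larger} domain, not a smaller one). Porosity gives $\rho(\Omega,R)\leq (1-\nu^{2d})|B_R|$, and the resulting multiplicative factor $\kappa_d(\tfrac{\pi}{2}R^2)(1-\nu^{2d})$ is strictly less than $1$ once $R$ is small, because $\kappa_d(x)\to 1$ as $x\to 0$. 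The iteration then runs \emph{upward} in scale: one chooses a geometrically increasing sequence of radii $R_j\sim(3/\nu)^j h$ from $h$ up to order $1$, checks via a thickening lemma that each successive $\Omega^{(j)}=\Omega_{R_1+\cdots+R_j}$ remains porous (with porosity at least $\nu/2$), and multiplies out the $\sim|\log h|$ factors each bounded by $1-\epsilon$. The crucial difference from your scheme is that the gain at each step comes from a density estimate on the \emph{whole} porous set against its thickening, never from a local comparison of two nested balls.
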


The essential property of the Fock space that we utilize is the subaveraging property, where point evaluations can be bounded by an average over the ball. The statement is found in \cite{Zhu_Analysis_on_Fock_spaces} for $\pazocal{F}^{p}(\mathbb{C})$, which generalized to $\pazocal{F}^{p}(\mathbb{C}^{d})$  (see Appendix \ref{appendix_subaveraging_proof} for details) reads:
\begin{lemma}
\label{lemma_subaveraging_in_Fock_space}
For any $F\in \pazocal{F}^{p}(\mathbb{C}^{d})$ and any point $z\in \mathbb{C}^{d}$, we have for all $R>0$ that
\begin{equation}
\begin{aligned}
    |F(z)|^p e^{-\frac{p}{2}\pi |z|^2}\leq \left(\frac{p}{2}\right)^{d}\left(1-e^{-\frac{p}{2}\pi R^2}\sum_{j=0}^{d-1}\frac{\left(\frac{p}{2}\pi R^2\right)^{j}}{j!}\right)^{-1} \int_{B_{R}(z)}|F(\xi)|^p e^{-\frac{p}{2}\pi|\xi|^2}\mathrm{d}A(\xi).
\end{aligned}
\end{equation}
\end{lemma}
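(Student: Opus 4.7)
The plan is to reduce the statement to a weighted sub-mean-value inequality for the plurisubharmonic function $|F|^p$ against the Gaussian weight. Since $F$ is entire, $|F|^p$ is subharmonic on $\mathbb{C}^d\cong \mathbb{R}^{2d}$, so the classical spherical sub-mean inequality applies; the work is to convert this into the claimed bound with the correct Gaussian weighting and to extract the explicit constant depending on $d$ and $R$.

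First I would perform a translation--modulation trick to recentre the estimate at the origin. Define the auxiliary entire function
\begin{equation*}
G(\eta):=F(z+\eta)\,e^{-\pi\overline{z}\cdot\eta},\qquad \eta\in\mathbb{C}^d,
\end{equation*}
which is holomorphic in $\eta$ because the linear form $\overline{z}\cdot\eta=\sum_{j=1}^d \overline{z_j}\eta_j$ is holomorphic in $\eta$. Expanding $|z+\eta|^2=|z|^2+2\operatorname{Re}(\overline{z}\cdot\eta)+|\eta|^2$ gives the identity
\begin{equation*}
|G(\eta)|^p e^{-\tfrac{p}{2}\pi|\eta|^2}=e^{\tfrac{p}{2}\pi|z|^2}\,|F(z+\eta)|^p e^{-\tfrac{p}{2}\pi|z+\eta|^2},
\end{equation*}
so that $|G(0)|^p=|F(z)|^p$ and, via the substitution $\xi=z+\eta$, the weighted integral of $|G|^p$ over a ball about the origin equals $e^{\tfrac{p}{2}\pi|z|^2}$ times the weighted integral of $|F|^p$ over $B_R(z)$.

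Next I would apply the spherical sub-mean inequality for the subharmonic function $|G|^p$: for every $r>0$,
\begin{equation*}
|G(0)|^p \le \frac{1}{|\partial B_r|}\int_{\partial B_r(0)}|G(\eta)|^p\,dS(\eta).
\end{equation*}
Since $e^{-\tfrac{p}{2}\pi r^2}=e^{-\tfrac{p}{2}\pi|\eta|^2}$ on $\partial B_r(0)$, multiplying both sides by $e^{-\tfrac{p}{2}\pi r^2}|\partial B_r|$ and integrating $r$ from $0$ to $R$ yields
\begin{equation*}
|G(0)|^p\int_0^R e^{-\tfrac{p}{2}\pi r^2}|\partial B_r|\,dr \le \int_{B_R(0)}|G(\eta)|^p e^{-\tfrac{p}{2}\pi|\eta|^2}\,dA(\eta).
\end{equation*}

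Finally I would evaluate the scalar on the left. Using $|\partial B_r|=\tfrac{2\pi^d r^{2d-1}}{(d-1)!}$ and substituting $t=\tfrac{p}{2}\pi r^2$, the integral reduces to an incomplete gamma function via the identity $\int_0^a t^{d-1}e^{-t}\,dt=(d-1)!\bigl(1-e^{-a}\sum_{j=0}^{d-1}a^j/j!\bigr)$ evaluated at $a=\tfrac{p}{2}\pi R^2$, giving
\begin{equation*}
\int_0^R e^{-\tfrac{p}{2}\pi r^2}|\partial B_r|\,dr = \left(\tfrac{2}{p}\right)^d\left(1-e^{-\tfrac{p}{2}\pi R^2}\sum_{j=0}^{d-1}\frac{(\tfrac{p}{2}\pi R^2)^j}{j!}\right).
\end{equation*}
Substituting back the identity for $|G(0)|^p$ and the integral identity for $|G|^p e^{-\tfrac{p}{2}\pi|\eta|^2}$, and then multiplying through by $e^{-\tfrac{p}{2}\pi|z|^2}$, produces exactly the claimed inequality. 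The only real subtlety is the bookkeeping of the Gaussian factor under translation; once the holomorphic modulation $e^{-\pi\overline{z}\cdot\eta}$ is inserted, the remaining steps reduce to the standard spherical sub-mean inequality and a routine gamma-function computation.
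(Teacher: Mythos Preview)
Your proof is correct and follows essentially the same approach as the paper: the translation--modulation $G(\eta)=F(z+\eta)e^{-\pi\overline{z}\cdot\eta}$, the spherical sub-mean-value inequality for $|G|^p$, and the radial integration against the Gaussian weight to produce the incomplete-gamma constant. The only difference is cosmetic: the paper first derives the sphere average from plurisubharmonicity (averaging over complex circles and then over the quotient $\partial B_1/\!\sim$) before applying the translation trick, whereas you invoke subharmonicity of $|G|^p$ on $\mathbb{R}^{2d}$ directly and perform the translation first---a slight streamlining but not a different route.
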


Proceeding, we require the following two concepts: For a set $\Omega\subseteq \mathbb{C}^{d}$ and $R>0$, we define the \textit{maximal Nyquist density}, $\rho(\Omega,R)$, by
\begin{align}
    \rho(\Omega,R)&:= \sup_{z\in\mathbb{C}^{d}}|\Omega\cap B_{R}(z)|\leq \max\{|\Omega|,|B_{R}(0)|\}.
    \label{Nyquist_density_definition}\\
\intertext{Further, we define the $R$-\textit{thickened set}, $\Omega_R$, by}
    \Omega_R &:= \Omega + B_{R}(0) = \cup_{z\in \Omega}B_{R}(z).\nonumber
\end{align}
With these notions and Lemma \ref{lemma_subaveraging_in_Fock_space}, we present an upper bound estimate for the integral over $\Omega$.
\begin{lemma}
\label{lemma_general_upper_bound_nu_porocity_thickening}
Suppose $\Omega\subseteq \mathbb{C}^{d}$ measurable. Then for any $F\in \pazocal{F}^{p}(\mathbb{C}^{d})$ and any $R>0$, we have that
\begin{equation}
\begin{aligned}
    \int_{\Omega}|F(z)|^{p}e^{-\frac{p}{2}\pi|z|^{2}}\mathrm{d}A(z) \leq \left(\frac{p}{2}\right)^{d}\frac{\rho(\Omega,R)}{1-e^{-\frac{p}{2}\pi R^2}\sum_{j=0}^{d-1}\frac{\left(\frac{p}{2}\pi R^2\right)^j}{j!}} \int_{\Omega_R}|F(z)|^{p}e^{-\frac{p}{2}\pi|z|^{2}}\mathrm{d}A(z).
\end{aligned}
\end{equation}
\begin{proof}
By Lemma \ref{lemma_subaveraging_in_Fock_space},
\begin{align*}
    \int_{\Omega}|F(z)|^{p}e^{-\frac{p}{2}\pi|z|^{2}}\mathrm{d}A(z) \leq  \left(\frac{p}{2}\right)^{d}&\left(1-e^{-\frac{p}{2}\pi R^2}\sum_{j=0}^{d-1}\frac{\left(\frac{p}{2}\pi R^2\right)^j}{j!}\right)^{-1}\\
    &\cdot\int_{\Omega}\int_{B_{R}(z)}|F(\xi)|^{p}e^{-\frac{p}{2}\pi |\xi|^2}\mathrm{d}A(\xi)\mathrm{d}A(z).
\end{align*}
Since $z\in \Omega$, we have that
\begin{align*}
    \int_{B_{R}(z)}|F(\xi)|^{p}e^{-\frac{p}{2}\pi |\xi|^2}\mathrm{d}A(\xi) = \int_{\Omega_{R}}\chi_{B_{R}(z)}(\xi)\cdot|F(\xi)|^{p}e^{-\frac{p}{2}\pi |\xi|^2}\mathrm{d}A(\xi). \ \ \ \ \ \ 
\end{align*}
By Fubini's theorem, we obtain 

\begin{align*}
    &\int_{\Omega}\int_{B_{R}(z)}|F(\xi)|^{p}e^{-\frac{p}{2}\pi |\xi|^2}\mathrm{d}A(\xi)\mathrm{d}A(z)=\int_{\Omega_{R}}\left(\int_{\Omega}\chi_{B_{R}(\xi)}(z)\mathrm{d}A(z)\right)|F(\xi)|^{p}e^{-\pi \frac{p}{2} |\xi|^2}\mathrm{d}A(\xi)\\
    &=\int_{\Omega_{R}}|\Omega\cap B_{R}(\xi)|\cdot |F(\xi)|^{p}e^{-\frac{p}{2}\pi |\xi|^2}\mathrm{d}A(\xi) \leq \rho(\Omega,R)\int_{\Omega_{R}}|F(\xi)|^{p}e^{-\frac{p}{2}\pi |\xi|^2}\mathrm{d}A(\xi).
\end{align*}

\end{proof}
\end{lemma}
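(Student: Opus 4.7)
The plan is to apply the pointwise subaveraging bound of Lemma \ref{lemma_subaveraging_in_Fock_space} inside the integral over $\Omega$, then swap the order of integration via Fubini, and finally recognize the inner slice volume as being controlled by the maximal Nyquist density $\rho(\Omega,R)$. Set
\begin{align*}
C_{p,d}(R) := \left(\frac{p}{2}\right)^{d}\left(1-e^{-\frac{p}{2}\pi R^2}\sum_{j=0}^{d-1}\frac{\left(\frac{p}{2}\pi R^2\right)^{j}}{j!}\right)^{-1},
\end{align*}
so Lemma \ref{lemma_subaveraging_in_Fock_space} reads $|F(z)|^p e^{-\frac{p}{2}\pi|z|^2}\le C_{p,d}(R)\int_{B_R(z)}|F(\xi)|^p e^{-\frac{p}{2}\pi|\xi|^2}\mathrm{d}A(\xi)$ for every $z\in\mathbb{C}^d$.

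First I would integrate this pointwise bound over $z\in\Omega$, which produces a double integral of the non-negative integrand $\chi_{B_R(z)}(\xi)\cdot|F(\xi)|^p e^{-\frac{p}{2}\pi|\xi|^2}$ against the product measure $\mathrm{d}A(\xi)\mathrm{d}A(z)$ on $\Omega\times\mathbb{C}^d$. Next, observing that whenever $z\in\Omega$ the ball $B_R(z)$ lies inside the thickened set $\Omega_R=\Omega+B_R(0)$, I can restrict the $\xi$-integration to $\Omega_R$ without loss. Fubini's theorem (applicable because the integrand is non-negative and measurable) then allows me to swap the order and use the symmetry $\chi_{B_R(z)}(\xi)=\chi_{B_R(\xi)}(z)$, giving
\begin{align*}
\int_{\Omega}\int_{B_R(z)}|F(\xi)|^p e^{-\frac{p}{2}\pi|\xi|^2}\mathrm{d}A(\xi)\mathrm{d}A(z)
=\int_{\Omega_R}|\Omega\cap B_R(\xi)|\cdot|F(\xi)|^p e^{-\frac{p}{2}\pi|\xi|^2}\mathrm{d}A(\xi).
\end{align*}

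Finally, the slice volume $|\Omega\cap B_R(\xi)|$ is by definition dominated by the maximal Nyquist density $\rho(\Omega,R)=\sup_{\xi\in\mathbb{C}^d}|\Omega\cap B_R(\xi)|$, which can be pulled out as a uniform constant to yield the claimed estimate with multiplicative factor $C_{p,d}(R)\cdot\rho(\Omega,R)$. No hypothesis on $\Omega$ beyond measurability is needed, and the bound is vacuous exactly when the right-hand side is already infinite. The only mildly delicate point is the measurability check making Fubini legitimate; this follows because $(z,\xi)\mapsto\chi_{B_R(z)}(\xi)=\chi_{\{|z-\xi|<R\}}$ is jointly measurable, but it is not an obstacle. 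The key conceptual step—the one that makes the lemma useful downstream—is the passage from the $z$-slice $B_R(z)$ to the $\xi$-slice $\Omega\cap B_R(\xi)$ via Fubini, since it is this dualized slice whose volume is uniformly controlled by the geometry of $\Omega$ through $\rho(\Omega,R)$.
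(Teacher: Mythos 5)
Your proposal is correct and follows essentially the same route as the paper's own proof: subaveraging pointwise, restricting the inner integral to $\Omega_R$ since $B_R(z)\subseteq\Omega_R$ for $z\in\Omega$, swapping the order via Fubini--Tonelli using the symmetry $\chi_{B_R(z)}(\xi)=\chi_{B_R(\xi)}(z)$, and bounding the dual slice volume by $\rho(\Omega,R)$. The added remark on joint measurability is a harmless refinement the paper leaves implicit.
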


In the above lemma notice that the integral over $\Omega_{R}$ is \textit{always} bounded by the integral over $\mathbb{C}^d$, that is, 
\begin{align*}
    \| F\cdot \chi_{\Omega}\|_{\pazocal{L}^p}^p \leq \left(\frac{p}{2}\right)^{d}&\frac{\rho(\Omega,R)}{1-e^{-\frac{p}{2}\pi R^2}\sum_{j=0}^{d-1}\frac{\left(\frac{p}{2}\pi R^2\right)^j}{j!}} \ \| F\|_{\pazocal{L}^p}^p \ \ \forall \ \ R>0.
\end{align*}
Further, by the interpolation result \eqref{complex_interpolation_inequality_reference}, we can optimize the right-hand side with the estimate for $p=1$ so that
\begin{align}
    \| F\cdot \chi_{\Omega}\|_{\pazocal{L}^p}^p \leq 2^{-d}&\frac{\rho(\Omega,R)}{1-e^{-\frac{1}{2}\pi R^2}\sum_{j=0}^{d-1}\frac{\left(\frac{1}{2}\pi R^2\right)^j}{j!}} \ \| F\|_{\pazocal{L}^p}^p \ \ \forall \ \ R>0.
    \label{Fock_space_nyquist_density}
\end{align}
Regardless, this shows that estimates of the quotient $\| F\cdot \chi_{\Omega}\|_{\pazocal{L}^p}^{p}/\| F\|_{\pazocal{L}^p}^{p}$ can be solely based on estimates of the maximal Nyquist density. E.g., as shown in the subsequent section \ref{section_density_of_cantor_sets}, under certain growth conditions, we are able to obtain good estimates for Nyquist density for the standard Cartesian product of Cantor sets and radial Cantor sets. For the general case of porous sets, however, we also take into account the integral over the thickened set.

Besides the trivial upper bounds of \eqref{Nyquist_density_definition}, if we suppose $\Omega\subseteq \mathbb{C}^{d}$ is $\nu$-porous on scales $\alpha_{\min}$ to $\alpha_{\max}$ and consider a radius $R$ within the scale bounds, we obtain, by Def. \ref{definition_nu_porosity}, the simple estimate
\begin{align*}
    \rho(\Omega,R) \leq \sup_{|z-\xi|\leq (1-\nu )R}\big|B_{R}(z)\setminus B_{\nu R}(\xi)\big|= \left(1-\nu^{2d}\right)|B_{R}(0)| = \left(1-\nu^{2d}\right)\frac{\left(\pi R^2\right)^d}{d!}.
\end{align*}
We apply this upper bound to Lemma \ref{lemma_general_upper_bound_nu_porocity_thickening}, which yields the corollary:

\begin{corollary}
\label{corollary_subaveraging_in_Fock_space_nu_porocity}
Suppose $\Omega\subseteq \mathbb{C}^{d}$ is $\nu$-porous on scales $\alpha_{\min}$ to $\alpha_{\max}$. Then for any radius $R\in [\alpha_{\min},\alpha_{\max}]$ and any function $F\in \pazocal{F}^{p}(\mathbb{C}^{d})$, we have that 
\begin{align}
    \int_{\Omega}|F(z)|^{p}e^{-\frac{p}{2}\pi|z|^{2}}\mathrm{d}A(z) \leq \kappa_d\left(\frac{p}{2}\pi R^2 \right)\cdot \left(1-\nu^{2d}\right) \int_{\Omega_R}|F(z)|^{p}e^{-\frac{p}{2}\pi|z|^{2}}\mathrm{d}A(z),
\end{align}
where the function $\kappa_{d}$ is given by
\begin{align}
    \kappa_{d}(x):=\frac{x^{d}}{d!}\left(1-e^{-x}\sum_{j=0}^{d-1}\frac{x^j}{j!}\right)^{-1}.
    \label{thickening_constant_kappa}
\end{align}
\end{corollary}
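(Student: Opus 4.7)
The plan is to combine Lemma \ref{lemma_general_upper_bound_nu_porocity_thickening} with an explicit upper bound on the maximal Nyquist density $\rho(\Omega,R)$ that is forced by the $\nu$-porosity assumption. Since Lemma \ref{lemma_general_upper_bound_nu_porocity_thickening} already gives
\begin{align*}
    \int_{\Omega}|F(z)|^{p}e^{-\frac{p}{2}\pi|z|^{2}}\mathrm{d}A(z) \leq \left(\frac{p}{2}\right)^{d}\frac{\rho(\Omega,R)}{1-e^{-\frac{p}{2}\pi R^2}\sum_{j=0}^{d-1}\frac{\left(\frac{p}{2}\pi R^2\right)^j}{j!}} \int_{\Omega_R}|F(z)|^{p}e^{-\frac{p}{2}\pi|z|^{2}}\mathrm{d}A(z),
\end{align*}
the task reduces to controlling $\rho(\Omega,R)$ by a quantity of the form $(1-\nu^{2d})|B_R(0)|$ whenever $R$ lies in the porosity scale $[\alpha_{\min},\alpha_{\max}]$.

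Next, I would unpack the density bound using Definition \ref{definition_nu_porosity}. Fix any $z\in\mathbb{C}^d$. Since $R\in[\alpha_{\min},\alpha_{\max}]$, there exists $y$ with $B_{\nu R}(y)\subseteq B_R(z)$ and $|\Omega\cap B_{\nu R}(y)|=0$. Therefore
\begin{align*}
    |\Omega\cap B_R(z)| \leq |B_R(z)\setminus B_{\nu R}(y)| \leq |B_R(0)| - |B_{\nu R}(0)| = \bigl(1-\nu^{2d}\bigr)\frac{(\pi R^2)^d}{d!},
\end{align*}
where I used the invariance of Lebesgue measure under translation and the fact that the real dimension of $B_R(0)\subset\mathbb{C}^d$ is $2d$. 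Taking the supremum over $z$ gives $\rho(\Omega,R) \leq (1-\nu^{2d})\frac{(\pi R^2)^d}{d!}$, exactly as noted in the paragraph immediately preceding the corollary.

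Substituting this bound for $\rho(\Omega,R)$ into the estimate from Lemma \ref{lemma_general_upper_bound_nu_porocity_thickening}, the prefactor becomes
\begin{align*}
    \left(\frac{p}{2}\right)^{d}\cdot (1-\nu^{2d})\frac{(\pi R^2)^d}{d!}\cdot\left(1-e^{-\frac{p}{2}\pi R^2}\sum_{j=0}^{d-1}\frac{\left(\frac{p}{2}\pi R^2\right)^j}{j!}\right)^{-1}.
\end{align*}
Setting $x := \frac{p}{2}\pi R^2$, the $(\frac{p}{2})^d$ absorbs into $(\pi R^2)^d$ to form $x^d$, so the prefactor equals $(1-\nu^{2d})\cdot \kappa_d(x)$ with $\kappa_d$ as defined in \eqref{thickening_constant_kappa}. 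This yields exactly the claimed inequality.

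There is no real obstacle here: the corollary is essentially a bookkeeping step that records what happens when the porosity-driven density bound is inserted into the general Lemma \ref{lemma_general_upper_bound_nu_porocity_thickening}. The only place to be careful is to keep the real-dimension factor of $2d$ straight in the ball-volume computation and to verify that $R\in[\alpha_{\min},\alpha_{\max}]$ is precisely the hypothesis needed to invoke Definition \ref{definition_nu_porosity}, which it is.
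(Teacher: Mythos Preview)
Your proposal is correct and follows exactly the paper's approach: bound $\rho(\Omega,R)$ by $(1-\nu^{2d})\frac{(\pi R^2)^d}{d!}$ via Definition~\ref{definition_nu_porosity}, then plug this into Lemma~\ref{lemma_general_upper_bound_nu_porocity_thickening} and regroup to recognize $\kappa_d\!\left(\frac{p}{2}\pi R^2\right)$. There is nothing to add; the corollary is indeed just this bookkeeping step.
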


The crucial observation, moving forward, is that for some choices of $R>0$, the thickened set $\Omega_{R}$ is itself a porous set provided the original set $\Omega$ is porous. A special case of this observation is presented in Proposition 2.11 \cite{DyatlovFUP2019}. 

\begin{lemma}
\label{lemma_thickening_porous_set}
\textnormal{(Thickening of porous set)} Suppose $\Omega\subseteq \mathbb{C}^{d}$ is $\nu$-porous on scales $\alpha_{\min}$ to $\alpha_{\max}$. For any $0\leq r< \nu\cdot\alpha_{\max}$, consider the $r$-thickened set $\Omega_{r}:=\Omega+B_{r}(0) = \cup_{z\in\Omega}B_{r}(z)$. Then for any $R\in[\alpha_{\min},\alpha_{\max}]$ with $\frac{r}{\nu}<R$, the set $\Omega_{r}$ is $\left(\nu-\frac{r}{R}\right)$-porous on scales $R$ to $\alpha_{\max}$.
\begin{proof}
Consider $\alpha_{\min}\leq R\leq L\leq\alpha_{\max}$. By $\nu$-porosity of $\Omega$, for any $B_{L}(z)$ there exists $B_{\nu L}(\xi)\subseteq B_{L}(z)$ such that $|B_{\nu L}(\xi)\cap\Omega| =0$. After $r$-thickening, we maintain a zero intersection with $\Omega_{r}$ if the radius of $B_{\nu L}(\xi)$ is reduced to $\nu L-r$. Since $L$ is arbitrary and
\begin{align*}
    \left(\nu-\frac{r}{R}\right)L \leq \nu L -r \implies |B_{\left(\nu-\frac{r}{R}\right)L}(\xi)\cap \Omega_{r}| \leq |B_{\nu L-r}(\xi)\cap \Omega_{r}|=0,
\end{align*}
the set $\Omega_{r}$ satisfies the claimed porosity properties. The conditions on $r$ and $R$ ensure that the new porosity is positive and that the scale bounds are valid. 
\end{proof}
\end{lemma}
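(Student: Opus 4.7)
The plan is to transport each ``porous hole'' of $\Omega$ into a slightly smaller hole for $\Omega_{r}$, so that the porosity constant is degraded only by the ratio of the thickening $r$ to the ambient scale. I would fix an arbitrary ball $B_{L}(z)$ at a scale $L\in[R,\alpha_{\max}]$, use the $\nu$-porosity of $\Omega$ to extract a concentric sub-ball $B_{\nu L}(\xi)\subseteq B_{L}(z)$ with $|\Omega\cap B_{\nu L}(\xi)|=0$, and then argue that concentrically shrinking this sub-ball by $r$ produces a hole for $\Omega_{r}$.

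The technical heart is to establish $|\Omega_{r}\cap B_{\nu L-r}(\xi)|=0$. Here I would invoke the triangle inequality: if $y\in B_{\nu L-r}(\xi)\cap\Omega_{r}$, then by definition of $\Omega_{r}=\Omega+B_{r}(0)$ there exists $w\in\Omega$ with $|y-w|<r$, which forces $|w-\xi|<r+(\nu L-r)=\nu L$, so $w\in\Omega\cap B_{\nu L}(\xi)$. Thus every point of $\Omega_{r}$ inside the shrunken ball is generated by a witness in the negligible set $\Omega\cap B_{\nu L}(\xi)$. To match this to the stated porosity constant, I would combine with $L\geq R$, giving $\nu L-r\geq\bigl(\nu-\tfrac{r}{R}\bigr)L$, whence $B_{(\nu-r/R)L}(\xi)\subseteq B_{\nu L-r}(\xi)\subseteq B_{L}(z)$, and the smaller concentric ball inherits the negligible intersection with $\Omega_{r}$.

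Finally I would check that the hypotheses do exactly what is needed: $r<\nu\alpha_{\max}$ together with $r/\nu<R$ ensure $\nu-\tfrac{r}{R}>0$, so the new porosity constant is genuine, while the scale interval $[R,\alpha_{\max}]$ is nonempty by construction, and the upper scale $\alpha_{\max}$ is inherited directly from $\Omega$. The main obstacle I anticipate is the transfer of the ``measure zero'' property from $\Omega\cap B_{\nu L}(\xi)$ to $\Omega_{r}\cap B_{\nu L-r}(\xi)$, since a naive $r$-thickening can in principle inflate a null set into one of positive measure. The pinching provided by the triangle inequality is precisely what prevents this leakage: it confines every witness to the original porous ball, so no new mass can appear inside the shrunken concentric sub-ball. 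Once this pinching is in hand, the remaining ingredients---a one-line size comparison and the positivity/scale checks---are immediate.
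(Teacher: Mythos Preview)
Your proposal is correct and follows essentially the same approach as the paper: extract the $\nu$-porous hole $B_{\nu L}(\xi)$ for $\Omega$, shrink it by $r$ to obtain a hole for $\Omega_r$, and use $L\geq R$ to convert $\nu L-r$ into the uniform constant $(\nu-r/R)L$. You are in fact more explicit than the paper, spelling out the triangle-inequality step and flagging the measure-zero transfer issue that the paper simply asserts.
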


We are now ready to prove Theorem \ref{theorem_FUP_Fock_space}.

\begin{proof}
(Theorem \ref{theorem_FUP_Fock_space}) 
By the interpolation result \eqref{complex_interpolation_inequality_reference}, it suffices to prove the statement for the case $p=1$. Nonetheless, for notational convenience, we will rather consider the case $p=2$ as the only difference in the proof between $p=2$ and general $p\geq 1$ is that $\kappa_{d}(x)$ is replaced by $\kappa_{d}(\frac{p}{2}x)$.

Recall that for some $0<h\leq 1$ we presume $\Omega = \Omega(h)$ is $\nu$-porous on scales $h$ to $1$. For some finite sequence of radii $\{R_j\}_{j=1}^{n}$, we consider an associated sequence of thickened sets $\{\Omega^{(j)}\}_{j=0}^{n-1}$, given by
\begin{align*}
    \Omega^{(0)} :=\Omega \ \ \text{and } \ \Omega^{(j)}:=\Omega_{R_1+R_2+\dots+R_j} \ \ \text{for } j=1,2,\dots,n-1.
\end{align*}
We chose the radii such that the set $\Omega^{(j)}$ maintains the porous property, in the sense that 
\begin{align*}
    \Omega^{(j)} \ \text{is }  \nu_{j}\text{-porous on scales } R_{j+1} \ \text{to } 1.
\end{align*}
By Lemma \ref{lemma_thickening_porous_set}, the new porosity constants $\{\nu_j\}_j$ are given by
\begin{align}
    \nu_{j} = \nu -\frac{R_{1}+\dots+R_{j}}{R_{j+1}}, \ \ \text{where } \ R_{j+1}>\frac{R_{1}+\dots + R_{j}}{\nu} \ \ \text{for } j=0,1,\dots,n-1.
    \label{new_porosity_after_thickening}
\end{align}
Since the radii must also satisfy $h\leq R_{j}\leq 1$, the length of the sequences $n$ cannot be infinite and must depend on $h$. 
From the above conditions, it is clear that radii must at least grow geometrically, i.e., (in the equality case) $R_{j} = c\cdot a^{j}$ for $j=1,2,\dots,n$ where $a>1$ and $c>0$. 
In particular, the conditions are satisfied for the sequence of radii $R_j = c\cdot\left(\frac{3}{\nu}\right)^j$ for $j=1,2,\dots,n$ such that $c\cdot\left(\frac{3}{\nu}\right)^{n}\leq 1 < c\cdot\left(\frac{3}{\nu}\right)^{n+1}$. By choosing the initial radius to be as small as possible, namely $R_1 = h$, it follows that $0< n +\ln h\cdot \left(\ln\frac{3}{\nu}\right)^{-1} \leq 1$, which means $n\sim |\ln h|$.

Now we make repeated use of Corollary \ref{corollary_subaveraging_in_Fock_space_nu_porocity} to the sequence of thickened sets $\{\Omega^{(j)}\}_j$, which reveals that the integral over $\Omega$ is bounded by
\begin{align*}
    \int_{\Omega}|F(z)|^{2}e^{-\pi |z|^2}\mathrm{d}A(z) &\leq \left[\prod_{k=1}^{j}\kappa_{d}\left(\pi R_{k}^2\right)\cdot \left(1-\nu_{k-1}^{2d}\right) \right]
    \cdot\int_{\Omega^{(j)}}|F(z)|^{2}e^{-\pi|z|^2}\mathrm{d}A(z)\\ \ \ &\leq \left[\prod_{k=1}^{j}\kappa_{d}\left(\pi R_{k}^2\right)\cdot \left(1-\nu_{k-1}^{2d}\right) \right]\cdot \|F\|_{\pazocal{L}^2}^{2} \ \ \text{for } j=1,2,\dots,n.
\end{align*}
It is straightforward to verify that $\lim_{x\to 0}\kappa_{d}(x) =1$ and the derivative $\kappa_{d}'(x)>0$ for all $x>0$.
Combined with the fact that the sequence of porosity constants $\{\nu_j\}_j$ is decreasing, we conclude that the sequence $\{ \kappa_{d}\left(\pi R_{j}^2\right)\cdot \left(1-\nu_{j-1}^{2d}\right) \}_j$ is monotonically \textit{increasing}. Thus, we are interested in the index reduction $n_{0}\in \mathbb{N}$ such that
\begin{align*}
    \kappa_{d}\left(\pi R_{j}^2\right)\cdot \left(1-\nu_{j-1}^{2d}\right)< 1 \ \ \text{for } j=1,2,\dots,n-n_0.
\end{align*}
Since $\lim_{h\to0} \kappa_{d}\left(\pi R_1^2 \right)\cdot\left(1-\nu_{0}^{2d}\right) =1-\nu^{2d}<1$, for sufficiently small $h<h_0$, we have that there exists an $n_{0}$ such that the difference $n-n_{0}>0$. Next, we claim that there exists an index reduction $n_0$ which is \textit{independent} of $h<h_0$, implying $n-n_0 \sim |\ln h|$. Note that the procedure below does \textit{not} produce an optimal $n_0$, merely shows that such an $n_0$ exists.

From \eqref{new_porosity_after_thickening} with $R_j = c\left(\frac{3}{\nu}\right)^j$, we find the following lower bound for the porosities
\begin{align*}
    \nu_{j} = \nu-\left(\frac{\nu}{3}\right)^{j+1}\sum_{k=1}^{j}\left(\frac{3}{\nu}\right)^k
    \geq \frac{\nu}{2} \ \ \ \forall \ \ \ j\in\mathbb{N} \ \ \text{and } \ 0<\nu<1. 
\end{align*}
Now, let $r = r(\nu)>0$ denote the (unique) solution to $\kappa_{d}(\pi r^2)\cdot\left(1-\left(\frac{\nu}{2}\right)^{2d}\right) =1$, from which it is evident that $\kappa_{d}\left(\pi R_{j}^2\right)\cdot \left(1-\nu_{j-1}^{2d}\right)< 1$ for all $R_j <r$. Now, consider any fixed $r_0<r$, and observe that since $r_0$ is independent of $h$, and the radius $R_n(\approx 1)$ is bounded, there must exist a finite number $n_0$ also \textit{independent} of $h$ such that $R_{n-n_0} = R_n\cdot(\frac{\nu}{3})^{n_0}\leq r_0$.

In summary, by our choice of index reduction $n_0$, we obtain
\begin{align*}
    \kappa_{d}\left(\pi R_j^2\right)\cdot\left(1-\nu_{j-1}^{2d}\right) \leq \kappa_{d}(\pi r_0^2)\cdot\left(1-\left(\frac{\nu}{2}\right)^{2d}\right)=:(1-\epsilon) < 1\ \ \text{for } \ j=1,2,\dots,n-n_0. 
\end{align*}
Since $n-n_0 \sim |\ln h|$, we conclude
\begin{align*}
    \int_{\Omega}|F(z)|^{2}e^{-\pi |z|^2}\mathrm{d}A(z) \leq (1-\epsilon)^{n-n_0}\|F\|_{\pazocal{L}^2}^2\sim h^{\beta}\|F\|_{\pazocal{L}^2}^2 \ \ \text{for some } \beta >0.
\end{align*}
\end{proof}

\subsection{Fractal Uncertainty Principle for Modulation Spaces}
\label{section_FUP_modulation_spaces}
Recall that for signals $f$ and windows $\phi$ in $L^2(\mathbb{R}^d)$ the STFT $V_{\phi}f\in L^2(\mathbb{R}^{2d})$. For general $p\geq 1$, we instead consider modulation spaces, introduced in \cite{Feichtinger_1983_Modulation_spaces}. Let  $\pazocal{S}'(\mathbb{R}^d)$ denote the space of tempered distributions, and suppose $\phi:\mathbb{R}^d\to \mathbb{C}$ is a window function such that $0<\| V_{\phi}\phi\|_{L^1(\mathbb{R}^{2d})}<\infty$. Then the \textit{modulation space} $M^p(\mathbb{R}^d)$ is defined as the subspace
\begin{align*}
    M^p(\mathbb{R}^d) :=\left\{  f\in\pazocal{S}'(\mathbb{R}^d)  \ \big| \ \|V_{\phi}f\|_{L^p(\mathbb{R}^{2d})}<\infty \right\}.
\end{align*}
Depending on our particular choice of window, we induce equivalent norms on $M^p(\mathbb{R}^d)$, namely $\|f\|_{M^p}:=\|V_{\phi}f\|_{L^p(\mathbb{R}^{2d})}$. Hence, we can consider the Gaussian window $\phi_0$
and establish a connection between the modulation spaces and the Fock spaces. In fact, the Bargmann transform, defined in \eqref{Bargmann_transform_def}, extends to an isometric isomorphism $\pazocal{B}:M^p(\mathbb{R}^d)\to \pazocal{F}^p(\mathbb{C}^d)$ (see \cite{Abreu_Grochenig_Gabor_frames_2021}, \cite{Signahl_2012_Bargmann_transform_mapping_properties}). Utilizing identity \eqref{Bargmann_transform_STFT_identity}, we find that 
\begin{align*}
    \| V_{\phi_0} f\cdot \chi_{\Omega}\|_{L^{p}(\mathbb{R}^{2d})} = \| \pazocal{B}f\cdot \chi_{\Omega^*}\|_{\pazocal{L}^p},
\end{align*}
where $\Omega^{*} := \{z \in \mathbb{C}^d \ | \ \overline{z}\in \Omega \}$ denotes the complex conjugate subset. Thus, Theorem \ref{theorem_FUP_Fock_space} can be rephrased in terms of the STFT on modulation spaces.

\begin{theorem}
\label{theorem_FUP_STFT_modulation_spaces}
\textnormal{(FUP for modulation spaces $M^{p}(\mathbb{R}^{d})$)}
Let $0<h\leq 1$, and suppose $\Omega(h)\subseteq \mathbb{C}^{d}$ is an $h$-dependent family of sets which is $\nu$-porous on scales $h$ to $1$. Then for all $p\geq 1$ and all $f\in M^{p}(\mathbb{R}^d)\setminus\{0\}$ there exist constants $C, \beta >0$ only dependent on $\nu$ (and $d$) such that
\begin{align}
    \frac{\|V_{\phi_0}f\cdot \chi_{\Omega(h)}\|_{L^{p}(\mathbb{R}^{2d})}^p}{\|V_{\phi_0}f\|_{L^{p}(\mathbb{R}^{2d})}^p} \leq C h^{\beta} \ \ \forall \ \ 0<h\leq 1.&
\intertext{In particular, for $p=2$, Daubechies' time-frequency localization operator $P^{\phi_0}_{\Omega(h)}$ satisfies} 
    \| P^{\phi_0}_{\Omega(h)}\|_{\textnormal{op}} \leq C h^{\beta} \ \ \forall \ \ 0<h\leq 1.&
\end{align}
\end{theorem}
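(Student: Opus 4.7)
The plan is to reduce Theorem \ref{theorem_FUP_STFT_modulation_spaces} directly to Theorem \ref{theorem_FUP_Fock_space} via the Bargmann transform. The key identity
\[ \|V_{\phi_0}f\cdot\chi_{\Omega}\|_{L^p(\mathbb{R}^{2d})} = \|\pazocal{B}f\cdot\chi_{\Omega^*}\|_{\pazocal{L}^p}, \]
obtained from \eqref{Bargmann_transform_STFT_identity} after the substitution $\omega \mapsto -\omega$, is already recorded just before the theorem statement; combined with the isometric isomorphism $\pazocal{B}: M^p(\mathbb{R}^d) \to \pazocal{F}^p(\mathbb{C}^d)$, it turns the quotient appearing in the conclusion into the Fock-space quotient for $F = \pazocal{B}f$ and the reflected family $\Omega(h)^* = \{z \in \mathbb{C}^d : \overline{z}\in \Omega(h)\}$.

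Next, I would verify that $\Omega(h)^*$ inherits the $\nu$-porosity of $\Omega(h)$ with the \emph{same} constant $\nu$ and the \emph{same} scale bounds $h$ to $1$. This is immediate since complex conjugation acts on $\mathbb{C}^d \cong \mathbb{R}^{2d}$ as a Euclidean isometry (a component-wise reflection across the real axes), mapping balls to balls of the same radius and preserving Lebesgue measure; any pore $B_{\nu r}(y) \subseteq B_r(x)$ of $\Omega(h)$ is sent to a pore $B_{\nu r}(\overline{y}) \subseteq B_r(\overline{x})$ of $\Omega(h)^*$. Applying Theorem \ref{theorem_FUP_Fock_space} to $F = \pazocal{B}f \in \pazocal{F}^p(\mathbb{C}^d)\setminus\{0\}$ with this porous family then yields the first displayed inequality of the conclusion, with constants $C,\beta>0$ depending only on $\nu$ (and $d$).

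For the operator statement at $p=2$, I would invoke the correspondence already recorded in Section \ref{section_STFT_Fock_space_preliminaries}, namely $\langle P_{\Omega(h)}^{\phi_0}f,g\rangle = \langle \pazocal{T}_{\Omega(h)^*}\pazocal{B}f,\pazocal{B}g\rangle_{\pazocal{L}^2}$, which together with the Bargmann isometry gives $\|P_{\Omega(h)}^{\phi_0}\|_{\textnormal{op}} = \|\pazocal{T}_{\Omega(h)^*}\|_{\textnormal{op}}$; the Fock-space operator bound from Theorem \ref{theorem_FUP_Fock_space} then transfers verbatim. The main (and essentially only) obstacle here is bookkeeping: tracking the sign flip $\omega \mapsto -\omega$ (equivalently $z \mapsto \overline{z}$) that introduces the reflected set $\Omega(h)^*$, and confirming that this reflection preserves the porosity hypothesis. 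Beyond that there is no further substance, since both the identity \eqref{Bargmann_transform_STFT_identity} and the isometric properties of $\pazocal{B}$ between $M^p$ and $\pazocal{F}^p$ are already in place.
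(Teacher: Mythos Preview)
Your proposal is correct and follows exactly the paper's own route: the theorem is presented there as an immediate rephrasing of Theorem~\ref{theorem_FUP_Fock_space} via the Bargmann isometry $\pazocal{B}:M^p(\mathbb{R}^d)\to\pazocal{F}^p(\mathbb{C}^d)$ and the identity $\|V_{\phi_0}f\cdot\chi_{\Omega}\|_{L^p(\mathbb{R}^{2d})}=\|\pazocal{B}f\cdot\chi_{\Omega^*}\|_{\pazocal{L}^p}$. Your explicit check that conjugation preserves $\nu$-porosity is a detail the paper leaves implicit, but otherwise the arguments coincide.
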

Another noteworthy rephrasement is result \eqref{Fock_space_nyquist_density}, which in terms of the STFT reads
\begin{align}
    \| V_{\phi_0}f\cdot \chi_{\Omega}\|_{L^p(\mathbb{R}^{2d})}^p \leq 2^{-d}&\frac{\rho(\Omega,R)}{1-e^{-\frac{1}{2}\pi R^2}\sum_{j=0}^{d-1}\frac{\left(\frac{1}{2}\pi R^2\right)^j}{j!}} \ \| V_{\phi_0}f\|_{L^p(\mathbb{R}^{2d})}^p \ \ \forall \ \ R>0.
    \label{Nyquist_density_STFT_inequality}
\end{align}
\begin{remark}
For $d=1$, in \cite{ABREU2021103032_donoho_logan_large_sieve} Theorem 3, there is a similar but more general result, valid for \textit{all} Hermite windows $\{h_j\}_{j}$, namely 
\begin{align*}
    \| V_{h_j}f\cdot\chi_{\Omega}\|_{L^{p}(\mathbb{R}^{2})}^{p} \leq \frac{\rho(\Omega,R)}{1-e^{-\pi R^2}P_j(\pi R^2)}\| V_{h_j}f\|_{L^{p}(\mathbb{R}^{2})}^{p} \ \ \forall \ \ p\geq 1 \ \text{and} \ R>0,
\end{align*}
where $P_j$ is a specified polynomial of degree $2j$. In particular, for $j=0$, we have that $h_0 = \phi_0$ and $P_0 \equiv 1$ so that the above inequality reduces to
\begin{align}
    \| V_{\phi_0}f\cdot\chi_{\Omega}\|_{L^{p}(\mathbb{R}^{2})}^{p} \leq \frac{\rho(\Omega,R)}{1-e^{-\pi R^2}}\| V_{\phi_0}f\|_{L^{p}(\mathbb{R}^{2})}^{p} \ \ \forall \ \ p\geq 1 \ \text{and} \ R>0.
    \label{abreu_speckbacher_donoho_logan_large_sieve_gaussian_result}
\end{align}
By close inspection, for $d=1$ result \eqref{Nyquist_density_STFT_inequality} turns out to be an \textit{improvement} of \eqref{abreu_speckbacher_donoho_logan_large_sieve_gaussian_result}.\footnote{In \cite{HelgeKnutsen_Daubechies_2020} (section 3.2 page 10) there is a similar comparison for a specific set $\Omega$, where $\rho(\Omega,R)$ is known, with the claim that the upper bound obtained in \cite{HelgeKnutsen_Daubechies_2020} is also an improvement of \eqref{abreu_speckbacher_donoho_logan_large_sieve_gaussian_result}. By closer examination, this claim is \textit{incorrect}, and the upper bound for the localization operator is in fact the \textit{same} as result \eqref{abreu_speckbacher_donoho_logan_large_sieve_gaussian_result}, i.e., the special case $h_0 = \phi_0$ of Theorem 3 in \cite{ABREU2021103032_donoho_logan_large_sieve}.} 
\end{remark}

\section{Density of Cantor sets}
\label{section_density_of_cantor_sets}
In this section we consider the Cantor sets specifically and show, under certain conditions, how the FUP in Theorem \ref{theorem_FUP_STFT_modulation_spaces} (or equivalently Theorem \ref{theorem_FUP_Fock_space}) can be refined for this family of fractal sets. While the general FUP is formulated in terms of a continuous parameter $h\to 0^{+}$, it is more convenient to consider discrete iterations $n\to \infty$ when working with the Cantor sets. The relation between $h$ and $n$ is made clear by Lemma \ref{lemma_Cantor_set_porosity}, where for the Cantor iterate $\pazocal{C}_n(M,\mathcal{A},L)$, we have that
\begin{align*}
    h \sim L M^{-n}.
\end{align*}
From the above relation, we observe that $h$ is of the same order of magnitude as the intervals that make up the Cantor iterate. 
Furthermore, similarly to the condition imposed in \cite{DyatlovFUP2019}, we consider $L\sim h^{-1}$. Thus, we have that $L$ is dependent on the iterates $n$ so that $L(n) \sim M^{\frac{n}{2}}$. More precisely, we specify the multiplicative constants of the asymptotes so that the interval condition now reads:
\begin{definition}
Let the interval length $L$ be a function $\mathbb{N}\to \mathbb{R}_{+}$. The interval length $L$ satisfies condition $(I_M)$ with constants $0<c_1\leq c_2< \infty$ if
\begin{align}
    c_1 M^{\frac{n}{2}}\leq L(n) \leq c_2 M^{\frac{n}{2}} \ \ \text{for } \ n=0,1,2,\dots
    \label{Cantor_set_interval_length_condition}
\end{align}
\end{definition}
For the $2d$-dimensional radially symmetric Cantor iterate $\mathcal{C}_n^d(R,M,\mathcal{A})$, we adjust the above condition to the radius $R>0$, similar to condition (1.3) in \cite{HelgeKnutsen_Daubechies_2020} and (1.4) in \cite{HelgeKnutsen_2022}.
\begin{definition}
Let the radius $R$ be a function $\mathbb{N}\to \mathbb{R}_{+}$. The radius $R$ satisfies condition $(D_M)$ with constants $0<c_1\leq c_2< \infty$ if
\begin{align}
    c_1 M^{\frac{n}{2}}\leq R^{2d}(n)\leq c_2 M^{\frac{n}{2}}, \ \ \text{for } \ n=0,1,2,\dots
    \label{radius_condition}
\end{align}
\end{definition}

With these conditions, we present an FUP for the two multidimensional Cantor set constructions from section \ref{section_porous_sets_Cantor_sets}.

\begin{theorem}
\label{theorem_FUP_for_Cantor_sets}
\textnormal{(FUP for Cantor sets)} We consider the two constructions separately:
\begin{enumerate}[label =  \textnormal{(\roman*)},itemsep=0.4ex, before={\everymath{\displaystyle}}]
    \item Let $\mathcal{C}_n^{2d}(R,M,\mathcal{A})$ denote the $2d$-dimensional radially symmetric Cantor iterate, and suppose the radius $R = R(n)>0$ satisfies condition $(D_M)$ with constants $c_1\leq c_2$. Then there exists a positive constant $\gamma$ only dependent on $M$, $|\mathcal{A}|$ and $c_1, c_2$ such that for all $f\in M^p(\mathbb{R}^{d})$ with $p\geq 1$,
\begin{align}
    \| V_{\phi_0}f\cdot \chi_{\mathcal{C}^{2d}_n(R(n),M,\mathcal{A})}\|_{L^{p}(\mathbb{R}^{2d})}^p \leq \gamma \cdot \left(\frac{|\mathcal{A}|}{M}\right)^{\frac{n}{2}} \| V_{\phi_0}f\|_{L^{p}(\mathbb{R}^{2d})}^p \ \ \text{for } \ n=0,1,2,\dots
    \label{FUP_radial_Cantor_set}
\end{align}
    \item Let $\Omega:=\pazocal{C}_{n_1}(L_1,M_1,\mathcal{A}_1)\times\dots\times \pazocal{C}_{n_{2d}}(L_{2d},M_{2d},\mathcal{A}_{2d})$ denote the $2d$-dimensional Cartesian product of Cantor iterates, and suppose the interval lengths $L_j = L_j(n_j)$ satisfy condition $(I_{M_j})$ with the same constants $c_1\leq c_2$. Then there exists a positive constant $\gamma$ only dependent on $\{(M_j,|\mathcal{A}_j|)\}_j$ and $c_1, c_2$ such that for all $f\in M^{p}(\mathbb{R}^{d})$ with $p\geq 1$ and all iterations $\{n_j\}_j\in \mathbb{N}_{0}^{2d}$, 
\begin{align*}
    \| V_{\phi_0} f\cdot \chi_{\Omega}\|_{L^{p}(\mathbb{R}^{2d})}^{p} \leq \gamma\cdot \left[\prod_{j=1}^{2d}\left(\frac{|\mathcal{A}_j|}{M_j}\right)^{n_j/2}\right] \| V_{\phi_0}f\|_{L^{p}(\mathbb{R}^{2d})}^{p}.
\end{align*}
In particular, when all iterates are equal to say $\pazocal{C}_n(L(n),M,\mathcal{A})$, we find that 
\begin{align}
    \| V_{\phi_0}f\cdot \chi_{\pazocal{C}_n(L,M,\mathcal{A})^{2d}}\|_{L^p(\mathbb{R}^{2d})}^p\leq \gamma\cdot\left(\frac{|\mathcal{A}|}{M}\right)^{n\cdot d} \| V_{\phi_0}f \|_{L^p(\mathbb{R}^{2d})}^p \ \ \text{for }\ n=0,1,2\dots
\end{align}
\end{enumerate}
\end{theorem}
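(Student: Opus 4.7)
The plan is to apply the Nyquist density inequality \eqref{Nyquist_density_STFT_inequality} with a \emph{fixed} radius $R_0$ (depending only on the problem parameters, not on the iteration). Once $R_0$ is fixed, the denominator in \eqref{Nyquist_density_STFT_inequality} becomes a positive constant, so the theorem reduces to establishing the geometric decay of the maximal Nyquist density $\rho(\Omega, R_0)$ of the corresponding Cantor construction.

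For part (ii), I would exploit the product structure: since $B_{R_0}(z) \subseteq z + [-R_0, R_0]^{2d}$ and $\Omega$ factors as a Cartesian product, the Nyquist density splits as
\begin{align*}
    \rho(\Omega, R_0) \leq \prod_{j=1}^{2d} \sup_{z_j \in \mathbb{R}} \bigl|\pazocal{C}_{n_j}(L_j, M_j, \mathcal{A}_j) \cap [z_j - R_0, z_j + R_0]\bigr|.
\end{align*}
This reduces the task to the one-dimensional estimate: for $L \sim M^{n/2}$,
\begin{align*}
    \sup_x \bigl|\pazocal{C}_n(L, M, \mathcal{A}) \cap [x - R_0, x + R_0]\bigr| \lesssim (|\mathcal{A}|/M)^{n/2}.
\end{align*}
To prove this, I would choose the largest integer $k$ with $LM^{-k} \geq 2R_0$. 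Since $L \sim M^{n/2}$ and $R_0$ is fixed, $k \sim n/2$. An interval of length $2R_0 \leq LM^{-k}$ meets only $O(1)$ level-$k$ chunks of $\pazocal{C}_n$, and each level-$k$ chunk carries iterate measure $|\mathcal{A}|^{n-k} L M^{-n} \sim (|\mathcal{A}|/M)^{n/2}$. The extension to the Cartesian product, and the specialization to all factors equal, then follow directly.

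For part (i), the radial symmetry of $\mathcal{C}^{2d}_n$ makes $|B_{R_0}(z) \cap \mathcal{C}^{2d}_n|$ depend on $z$ only through $|z|$. Using polar coordinates,
\begin{align*}
    |B_{R_0}(z) \cap \mathcal{C}^{2d}_n| = \int_0^\infty \chi_{\pazocal{C}_n(R^{2d}, M, \mathcal{A})}(r^{2d}) \, \sigma(r; z, R_0)\, dr,
\end{align*}
where $\sigma(r; z, R_0)$ is the $(2d-1)$-area of the spherical cap $\partial B_r(0) \cap B_{R_0}(z)$. A direct geometric argument (treating the two cases $r \leq R_0$ and $r > R_0$) gives the uniform bound $\sigma(r; z, R_0) \leq C_d R_0^{2d-1}$. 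Changing variable $s = r^{2d}$ (which produces a Jacobian factor $s^{1/(2d) - 1}$) and applying the 1D density bound from part (ii) to $\pazocal{C}_n(R^{2d}, M, \mathcal{A})$ at the (possibly $n$-dependent) scale $\sim |z|^{2d-1} R_0$ that arises naturally from the substitution would then produce the required decay $(|\mathcal{A}|/M)^{n/2}$ uniformly in $z$.

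The main obstacle is precisely this radial case. Two technical subtleties must be handled: when $|z| \lesssim R_0$ (the ball contains or is close to the origin), the Jacobian $s^{1/(2d)-1}$ is singular; this regime is handled separately by the crude bound $|B_{R_0}(z) \cap \mathcal{C}^{2d}_n| \leq |\mathcal{C}^{2d}_n \cap B_{3R_0}(0)|$ and a direct 1D density bound on a fixed-length interval $[0, (3R_0)^{2d}]$. For large $|z|$ (up to the support radius $\sim R(n) \sim M^{n/(4d)}$), the effective 1D scale $|z|^{2d-1}R_0$ grows with $n$, and the general 1D Nyquist bound picks up an extra factor $(|z|^{2d-1} R_0)^{\log_M |\mathcal{A}|}$; this is exactly compensated by the Jacobian decay $|z|^{1-2d}$ because the non-triviality $|\mathcal{A}| < M$ gives $\log_M |\mathcal{A}| < 1$, so the residual power of $|z|$ is non-positive. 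The growth condition $R^{2d} \sim M^{n/2}$ is the synchronization that makes the $n$-exponents balance to $n/2$.
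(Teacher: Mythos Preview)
Your overall strategy---apply inequality \eqref{Nyquist_density_STFT_inequality} at a fixed radius and reduce everything to a Nyquist density bound---is exactly what the paper does, and your treatment of part (ii) (factor through the product, then count how many level-$k$ chunks an interval of fixed length can meet) is essentially the paper's Lemma~\ref{lemma_nyquist_density_Cantor_sets}~(ii) combined with Lemma~\ref{lemma_cantor_function_decay}.

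For part (i) the two arguments diverge in technique. The paper encloses $B_{R_0}(a)$ in the annulus $A_{R_0}(|a|)$, uses the weak subadditivity of the Cantor function (Lemma~\ref{lemma_Cantor_function_weak_subadditivity_property}) twice to reduce the radial variable to a \emph{fixed}-length interval, invokes Lemma~\ref{lemma_cantor_function_decay}, and then cancels the stray $|a|^{2d-1}$ against the surface-quotient factor $(R_0/|a|)^{2d-1}$ coming from the ratio of cap to full sphere. You instead integrate directly in polar coordinates, bound the cap area by $C_d R_0^{2d-1}$, and apply the one-dimensional density bound at the $n$-dependent scale $\sim |z|^{2d-1}R_0$; this produces a factor $|z|^{(2d-1)\log_M|\mathcal{A}|}$ which you absorb into the Jacobian $|z|^{1-2d}$ using $\log_M|\mathcal{A}|<1$. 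Both routes are correct. The paper's version is a little cleaner because it never needs the explicit exponent $\log_M|\mathcal{A}|$ and keeps the 1D lemma at fixed scale; your version is more self-contained because it avoids the Cantor-function subadditivity machinery and reads off the cancellation from the fractal dimension directly.
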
 

\begin{remark}
For localization on the radially symmetric Cantor set with $d=1$ and $p=2$, we retrieve the same upper bound asymptote as in \cite{HelgeKnutsen_2022} Theorem 3.3. In fact, Theorem 3.3 reveals that the general asymptote is \textit{optimal} in the sense that there are alphabets where the asymptote is \textit{precise}. However, the same theorem also state that there are alphabets where the asymptote is \textit{not} precise.
\end{remark}

The proof of Theorem \ref{theorem_FUP_for_Cantor_sets} is based on inequality \eqref{Nyquist_density_STFT_inequality}, combined with estimates of the maximal Nyquist density of the multidimensional Cantor iterates. 
To begin with, we consider the 1-dimensional $n$-iterate $\pazocal{C}_n(M,\mathcal{A}):=\pazocal{C}_{n}(1,M,\mathcal{A})$ based in $[0,1]$, to which we associate the so-called \textit{Cantor function} $\pazocal{G}_{n, M,\mathcal{A}}:\mathbb{R}\to [0,1]$, given by
\begin{align}
    \pazocal{G}_{n,M,\mathcal{A}}(x) := |\pazocal{C}_n(M,\mathcal{A})|^{-1}
    \begin{cases}
    0, \ \ x\leq 0,\\
    |\pazocal{C}_n(M,\mathcal{A})\cap[0,x]|, \ \ x>0.
    \end{cases}
    \label{Cantor_function}
\end{align}
For the iterates based in $[0,L]$, we consider the dilated Cantor function $\pazocal{G}_{n,M,\mathcal{A}}(\cdot\  L^{-1})$. The Cantor function is a useful concept as the difference $\pazocal{G}_{n,M,\mathcal{A}}(y)-\pazocal{G}_{n,M,\mathcal{A}}(x)$ measures the \textit{portion} of the $n$-iterate $\pazocal{C}_n(M,\mathcal{A})$ contained in the interval $[x,y]$. By definition, the Cantor function is said to be \textit{subadditive} if the difference is bounded by $\pazocal{G}_{n,M,\mathcal{A}}(y-x)$. While this is the case for the mid-third Cantor set (see \cite{JosefDobos_1996}), we cannot guarantee subadditivity with an arbitrary alphabet. Nonetheless, for our purpose, we only require a \textit{weaker} version utilizing the canonical alphabet $\overline{\mathcal{A}}:=\{0,1,\dots,|\mathcal{A}|\}$ (see \cite{HelgeKnutsen_2022} Appendix A).

\begin{lemma}
\label{lemma_Cantor_function_weak_subadditivity_property}
Let $\pazocal{G}_{n,M,\mathcal{A}}$ denote the Cantor-function, defined in \eqref{Cantor_function}. Then for any $x\leq y$,
\begin{align}
    \pazocal{G}_{n,M,\mathcal{A}}(y)-\pazocal{G}_{n,M,\mathcal{A}}(x) \leq \pazocal{G}_{n,M,\overline{\mathcal{A}}}(y-x).
    \label{weak_subadditivity_equation}
\end{align}
\end{lemma}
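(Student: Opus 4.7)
The plan is to prove the lemma by induction on the iterate level $n$, exploiting the self-similar structure of the Cantor iterates. For the base case $n=0$, the $0$-iterate equals $[0,1]$ for any non-empty alphabet, so $\pazocal{G}_{0,M,\mathcal{A}}(x) = \min(\max(x,0),1)$ independently of $\mathcal{A}$, and both sides of \eqref{weak_subadditivity_equation} reduce to $(y-x)_{+}$.

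For the inductive step from $n-1$ to $n$, the key ingredient is the self-similar decomposition
\begin{align*}
\pazocal{C}_n(M,\mathcal{A}) = \bigsqcup_{a \in \mathcal{A}} \left(\frac{a}{M} + \frac{1}{M}\, \pazocal{C}_{n-1}(M,\mathcal{A})\right).
\end{align*}
For a window $[x,y] \subseteq [0,1]$, introduce local coordinates on each subinterval $I_a := [a/M,(a+1)/M]$ by
\begin{align*}
x_a := \min(\max(M(x - a/M),\, 0),\, 1), \qquad y_a := \min(\max(M(y - a/M),\, 0),\, 1).
\end{align*}
Using $|\pazocal{C}_n(M,\mathcal{A})| = (|\mathcal{A}|/M)^n$ to normalize, the decomposition yields
\begin{align*}
\pazocal{G}_{n,M,\mathcal{A}}(y) - \pazocal{G}_{n,M,\mathcal{A}}(x) = \frac{1}{|\mathcal{A}|}\sum_{a \in \mathcal{A}}\bigl[\pazocal{G}_{n-1,M,\mathcal{A}}(y_a) - \pazocal{G}_{n-1,M,\mathcal{A}}(x_a)\bigr],
\end{align*}
and the induction hypothesis bounds each summand by $\pazocal{G}_{n-1,M,\overline{\mathcal{A}}}(y_a - x_a)$.

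The remaining task is to establish
\begin{align*}
\frac{1}{|\mathcal{A}|}\sum_{a \in \mathcal{A}} \pazocal{G}_{n-1,M,\overline{\mathcal{A}}}(y_a - x_a) \leq \pazocal{G}_{n,M,\overline{\mathcal{A}}}(y-x).
\end{align*}
Applying the identical self-similar expansion to the right-hand side, now with alphabet $\overline{\mathcal{A}}$ and window $[0, y-x]$, recasts this as a discrete rearrangement inequality. The driving structural observation is that the sequence $(y_a - x_a)_{a=0}^{M-1}$ is \emph{unimodal}: a block of zeros, a rising boundary value, a plateau at $1$, a descending boundary value, and zeros again (degenerate when $y-x<1/M$). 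Since $\pazocal{G}_{n-1,M,\overline{\mathcal{A}}}$ is nondecreasing, summing it over any index set of cardinality $|\mathcal{A}|$ is maximized by choosing the consecutive block straddling the plateau, which is exactly the configuration realized by $\overline{\mathcal{A}}$ when the window is anchored at $0$.

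The main obstacle I anticipate is precisely this rearrangement step. One has to verify that consecutive-digit packing dominates an arbitrary alphabet simultaneously at every level of the hierarchy, and the interaction with the boundary partials $y_a - x_a \in (0,1)$ (as opposed to the clean plateau value $1$) requires careful bookkeeping to avoid losing the bound when $[x,y]$ is poorly aligned with the $M$-adic grid. Once the single-level rearrangement is established, the induction closes and yields \eqref{weak_subadditivity_equation}.
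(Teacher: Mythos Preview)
The paper does not supply its own proof of this lemma; it is quoted from Appendix~A of \cite{HelgeKnutsen_2022}. So there is no argument here to compare against directly, and the relevant question is whether your outline actually goes through.

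It does not. The ``remaining task'' you isolate,
\[
\frac{1}{|\mathcal{A}|}\sum_{a\in\mathcal{A}}\pazocal{G}_{n-1,M,\overline{\mathcal{A}}}(y_a-x_a)\ \le\ \pazocal{G}_{n,M,\overline{\mathcal{A}}}(y-x),
\]
is not merely a matter of bookkeeping with the boundary partials; it is \emph{false} in general. Take $M=3$, $\mathcal{A}=\overline{\mathcal{A}}=\{0,1\}$, $n=3$ and the window $[x,y]=[0.3,0.5]$. One computes $(y_0-x_0,y_1-x_1,y_2-x_2)=(0.1,\,0.5,\,0)$, and with $g:=\pazocal{G}_{2,3,\{0,1\}}$ (for which $\pazocal{C}_2=[0,2/9]\cup[1/3,5/9]$) one gets $g(0.1)=9/40$ and $g(0.5)=7/8$, hence the left side equals $\tfrac12(9/40+7/8)=11/20$. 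On the other hand $\pazocal{C}_3\cap[0,0.2]=[0,2/27]\cup[1/9,5/27]$ has measure $4/27$, so $\pazocal{G}_{3,3,\{0,1\}}(0.2)=1/2$. Thus $11/20>1/2$ and the proposed rearrangement inequality fails, even though the lemma itself holds at this point ($\pazocal{G}_3(0.5)-\pazocal{G}_3(0.3)=7/16\le 1/2$).

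The structural reason is that applying the induction hypothesis piecewise replaces each $\pazocal{G}_{n-1,M,\mathcal{A}}(y_a)-\pazocal{G}_{n-1,M,\mathcal{A}}(x_a)$ by $\pazocal{G}_{n-1,M,\overline{\mathcal{A}}}(y_a-x_a)$, throwing away the positions $x_a,y_a$. When $[x,y]$ straddles a grid point $a/M$, one piece has $y_{a-1}$ near $1$ while the adjacent piece has $x_a=0$; your bound slides the first piece back to start at $0$ as well, and this double-counts the dense left end of the $\overline{\mathcal{A}}$-iterate. A proof along these self-similar lines has to retain the adjacency constraint --- for instance by working at the discrete level and comparing counts of $\pazocal{C}_n^{(d)}(M,\mathcal{A})$ in a block of $K$ \emph{consecutive} integers against $\pazocal{C}_n^{(d)}(M,\overline{\mathcal{A}})\cap\{0,\dots,K-1\}$, rather than decoupling into independent length contributions.
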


Re-scaling to Cantor iterates based $[0,L]$, we find, by the above lemma, that the maximal Nyquist density is bounded by
\begin{align*}
    \rho\big(\pazocal{C}_n(L,M,\mathcal{A}),x\big) = \max_{a\in \mathbb{R}}|\pazocal{C}_{n}(L,M,\mathcal{A})\cap[a,a+x]|\leq |\pazocal{C}_n(L,M,\overline{\mathcal{A}})\cap[0,x]|.
\end{align*}
In the next lemma, we enforce condition \eqref{Cantor_set_interval_length_condition}, which yields a more explicit estimate for the upper bound.

\begin{lemma}
\label{lemma_cantor_function_decay}
Suppose that the alphabet $\mathcal{A}$ is a proper subset of $\{0,1,\dots,M-1\}$, and suppose that the length $L(n)>0$ satisfies condition $(I_M)$ with constants $0<c_1\leq c_2 < \infty$. Then for any fixed $x> 0$, there exists a finite constant $\gamma>0$ dependent only on $x, c_1, c_2$ such that
\begin{align}
    |\pazocal{C}_n(L,M,\mathcal{A})\cap[0,x]| =\pazocal{G}_{n,M,\mathcal{A}}\big(xL^{-1}\big)\left(\frac{|\mathcal{A}|}{M}\right)^{n}L\leq \gamma\cdot \left(\frac{|\mathcal{A}|}{M}\right)^{\frac{n}{2}}\to 0 \ \text{as } n\to \infty.
    \label{Cantor_set_density}
\end{align}

\begin{proof}
Fix some positive integer $n_0<n$, and observe that for coefficients $a_j\in \mathcal{A}$, the sum $\sum_{j=0}^{n-1}a_j M^{j-n}<M^{n_0-n}$ only if $a_j =0$ for $j\geq n_0$. Thus, the Cantor function is bounded by 
\begin{align*}
    \pazocal{G}_{n,M,\mathcal{A}}(x) \leq |\mathcal{A}|^{n_0-n} \ \ \forall \ \ x<M^{n_0-n}.
\end{align*}
Evidently, we also have that $xM^{-m}<M^{n_0+m-(n-2m)}$, which, by the above estimate, yields
\begin{align}
    \pazocal{G}_{n,M,\mathcal{A}}(xM^{-m}) \leq |\mathcal{A}|^{n_0+m-(n+2m)}=|\mathcal{A}|^{-m}|\mathcal{A}|^{n_0-n} \ \ \text{for }\  m=0,1,2,\dots
    \label{Cantor_function_bound}
\end{align}
Now, suppose $x$ is \textit{any} fixed number, to which there exists a threshold iterate $N\geq 0$ so that $xM^{-\frac{N}{2}}<M^{-1}$, and we are in a position to apply \eqref{Cantor_function_bound}. From here, we deduce that there exists a constant only dependent on $x$, say $\kappa = \kappa(x)$ such that 
\begin{align*}
    \pazocal{G}_{n,M,\mathcal{A}}(xM^{-\frac{n}{2}}) \leq \kappa(x) |\mathcal{A}|^{-\frac{n}{2}} \ \ \text{for }\ n=0,1,2,\dots
\end{align*}
Enforce the lower bound condition $L(n)\geq c_1 M^{\frac{n}{2}}$. By monotonicity of the Cantor function, 
\begin{align*}
    \pazocal{G}_{n,M,\mathcal{A}}(xL^{-1})\leq \pazocal{G}_{n,M,\mathcal{A}}(x c_{1}^{-1} M^{-\frac{n}{2}})\leq \kappa(x c_1^{-1})|\pazocal{A}|^{-\frac{n}{2}}.
\end{align*}
Finally, enforce the upper bound condition $L(n)\leq c_2 M^{\frac{n}{2}}$, and inequality \eqref{Cantor_set_density} follows with constant $\gamma = \kappa(xc_1^{-1})c_2$. Since $0<c_1 \leq c_2 < \infty$, the constant $\gamma$ is indeed finite.
\end{proof}
\end{lemma}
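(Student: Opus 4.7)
The plan is to first establish the stated equality purely from definitions, and then extract the exponential decay from the self-similar structure of $\pazocal{C}_n(M,\mathcal{A})$, finally inserting the two-sided bound $(I_M)$ on $L(n)$.

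First, I would record the equality. The Cantor function is normalized so that $\pazocal{G}_{n,M,\mathcal{A}}(1)=1$, which means $|\pazocal{C}_n(M,\mathcal{A})\cap[0,y]|=\pazocal{G}_{n,M,\mathcal{A}}(y)\,(|\mathcal{A}|/M)^n$ for the $n$-iterate based in $[0,1]$. Since $\pazocal{C}_n(L,M,\mathcal{A})$ is simply the dilation of $\pazocal{C}_n(M,\mathcal{A})$ by the factor $L$, a change of variables $y=xL^{-1}$ yields
\begin{align*}
|\pazocal{C}_n(L,M,\mathcal{A})\cap[0,x]|=\pazocal{G}_{n,M,\mathcal{A}}(xL^{-1})\,(|\mathcal{A}|/M)^n\,L.
\end{align*}

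Next, the decisive estimate is a bound of the form $\pazocal{G}_{n,M,\mathcal{A}}(y)\leq |\mathcal{A}|^{-k}$ whenever $y\leq M^{-k}$ and $0\leq k\leq n$. The quickest derivation uses the discrete Cantor set: writing $\pazocal{C}_n^{(d)}(M,\mathcal{A})=\{\sum_{j=0}^{n-1}a_jM^j:a_j\in\mathcal{A}\}$, the base-$M$ digits of any element of $\pazocal{C}_n^{(d)}(M,\mathcal{A})$ lying in $[0,M^{n-k})$ must satisfy $a_j=0$ for all $j\geq n-k$, giving at most $|\mathcal{A}|^{n-k}$ such elements out of $|\mathcal{A}|^n$ total, hence a normalized proportion $|\mathcal{A}|^{-k}$. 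Because each level-$n$ building block of $\pazocal{C}_n(L,M,\mathcal{A})$ has length $M^{-n}L$ it is easiest to prove the statement modulo an innocuous constant (say, $2|\mathcal{A}|^{-k}$) to cover the case where the interval $[0,y]$ straddles two level-$k$ chunks; the monotonicity of $\pazocal{G}_{n,M,\mathcal{A}}$ on $[0,1]$ does the rest. This is really the only combinatorial input I need.

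With that tool in hand, condition $(I_M)$ translates the range of $xL^{-1}$ into a scale $M^{-k}$ with $k\sim n/2$. Specifically, from $L(n)\geq c_1 M^{n/2}$ we have $xL^{-1}\leq (x/c_1)M^{-n/2}$, so choosing $k:=\lceil n/2\rceil-k_0(x,c_1)$ with $k_0$ a fixed integer large enough that $(x/c_1)M^{-n/2}\leq M^{-k}$ for every $n$ gives $\pazocal{G}_{n,M,\mathcal{A}}(xL^{-1})\leq C(x,c_1)\,|\mathcal{A}|^{-n/2}$. Plugging this and the upper bound $L(n)\leq c_2 M^{n/2}$ into the equality from the first step produces
\begin{align*}
\pazocal{G}_{n,M,\mathcal{A}}(xL^{-1})\,(|\mathcal{A}|/M)^n\,L\leq C(x,c_1)\,c_2\,|\mathcal{A}|^{-n/2}\,(|\mathcal{A}|/M)^n\,M^{n/2}=\gamma\,(|\mathcal{A}|/M)^{n/2},
\end{align*}
with $\gamma=\gamma(x,c_1,c_2)$ finite because $c_1,c_2\in(0,\infty)$.

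The only delicate step is the combinatorial claim $\pazocal{G}_{n,M,\mathcal{A}}(M^{-k})\lesssim |\mathcal{A}|^{-k}$; everything else is algebra. The main obstacle to watch out for is the potential misalignment of the interval $[0,xL^{-1}]$ with the level-$k$ partition of $\pazocal{C}_n$, but this only costs a bounded multiplicative constant that is absorbed into $\gamma$. Once that is pinned down, the two-sided condition $(I_M)$ is exactly what is needed: the lower bound $c_1M^{n/2}\leq L$ forces the argument of $\pazocal{G}_{n,M,\mathcal{A}}$ to shrink at a geometric rate, producing the $|\mathcal{A}|^{-n/2}$ decay, while the upper bound $L\leq c_2M^{n/2}$ prevents the prefactor $L$ from spoiling the asymptote.
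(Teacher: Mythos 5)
Your proof is correct and follows essentially the same route as the paper: the digit-counting bound $\pazocal{G}_{n,M,\mathcal{A}}(y)\leq|\mathcal{A}|^{-k}$ for $y\leq M^{-k}$ is exactly the paper's key estimate (with $k=n-n_0$), and the lower and upper halves of condition $(I_M)$ are used in the same way, to shrink the argument of $\pazocal{G}_{n,M,\mathcal{A}}$ and to control the prefactor $L$, respectively. The factor of $2$ you allow for ``straddling'' is not actually needed, since the interval $[0,y]$ is anchored at the left endpoint of the construction, but it is harmless.
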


We are now ready to estimate upper bounds for the maximal Nyquist densities for the multidimensional Cantor iterates, which, combined with \eqref{Nyquist_density_STFT_inequality}, proves Theorem \ref{theorem_FUP_for_Cantor_sets}.

\begin{lemma}
\label{lemma_nyquist_density_Cantor_sets}
\textnormal{(Nyquist density of Cantor sets)} Let $r>0$ be fixed.
\begin{enumerate}[label =  \textnormal{(\roman*)},itemsep=0.4ex, before={\everymath{\displaystyle}}]
\item Let $\mathcal{C}_{n}^{2d}(R,M,\mathcal{A})$ denote the radial Cantor iterate, and suppose the radius $R(n)>0$ satisfies condition $(D_M)$ with constants $c_1 \leq c_2$. Then there exist a constant $\gamma_r>0$ only dependent on $M, |\mathcal{A}|$ and $c_1,c_2$ such that the maximal Nyquist density
\begin{align*}
    \rho\left(\mathcal{C}_{n}^{2d}(R,M,\mathcal{A}), r\right) \leq \gamma_r\cdot \left(\frac{|\mathcal{A}}{M}\right)^{\frac{n}{2}} \ \ \text{for } \ n=0,1,2,\dots
\end{align*}
\item Let $\Omega:=\pazocal{C}_{n_1}(L_1,M_1,\mathcal{A}_1)\times \dots \times \pazocal{C}_{n_{2d}}(L_{2d},M_{2d},\mathcal{A}_{2d})$ denote the Cartesian product, and suppose the interval lengths $L_j(n_j)$ satisfy condition $(I_{M_j})$ with the same constants $c_1\leq c_2$. Then there exist a constant $\gamma_r>0$ only dependent on $\{(M_j, |\mathcal{A}_j|)\}_j$ and $c_1,c_2$ such that the maximal Nyquist density is bounded by
\begin{align*}
    \rho\left(\Omega,r\right)\leq \gamma_r \cdot \prod_{j=1}^{2d}\left(\frac{|\mathcal{A}_j|}{M_j}\right)^{n_j/2} \ \ \text{for } \ n=0,1,2,\dots \ \ \ \ \ \ \ \
\end{align*}
\end{enumerate}
\begin{proof}
Without loss of generality, we can assume that $r\geq 1$. For part (i), fix a cutoff value $N\gg 1$ and distinguish between two cases for $B_{r}(a)\in \mathbb{R}^{2d}$, (1) $|a|\leq Nr$ and (2) $|a|>Nr$:

For case (1), $B_{r}(a)\subseteq B_{(N+1)r}(0)$ so that $|\mathcal{C}^{2d}_n(\dots)\cap B_{r}(a)|\leq |\mathcal{C}^{2d}_n(\dots)\cap B_{(N+1)r}(0)| $. By definition of the radial Cantor iterate, it follows that
\begin{align*}
    \left|\mathcal{C}^{2d}_{n}(R,M,\mathcal{A})\cap B_{r}(a)\right|&\leq \frac{\pi^d}{d!}\left|\pazocal{C}_{n}(R^{2d}, M,\mathcal{A})\cap \left[0,(N+1)^{2d}r^{2d}\right]\right|\\
    &\leq \gamma_{1}\cdot \left(\frac{|\mathcal{A}|}{M}\right)^{\frac{n}{2}} \ \ \text{(by Lemma \ref{lemma_cantor_function_decay})}
\end{align*}
for some constant $\gamma_{1}>0$.

For case (2), $B_{r}(a)\subseteq A_{r}(|a|):=\{z\in \mathbb{R}^{2d} \ | \ 0<|a|-r\leq |z|\leq |a|+r \}$. For the annulus $A_{r}(|a|)$ with $r< |a|$, we have again, by definition of the radial Cantor iterate, that
\begin{align}
    \big|\mathcal{C}^{2d}_{n}(R,M,\mathcal{A})\cap A_{r}(|a|)\big| = \frac{\pi^d}{d!}\big|\pazocal{C}_{n}(R^{2d}, M,\mathcal{A})\cap\left[(|a|-r)^{2d},(|a|+r)^{2d}\right]\big|&\nonumber\\
    \leq \frac{\pi^d}{d!}\big|\pazocal{C}_{n}(R^{2d}, M,\overline{\mathcal{A}})\cap\left[0,(|a|+r)^{2d}-(|a|-r)^{2d}\right]\big| \ \ \text{(by Lemma \ref{lemma_Cantor_function_weak_subadditivity_property})}.&
    \label{case_2_radial_Cantor_iterate_intermediate_calculation}
\end{align}
Since $|a|>1$ and since the leading term $|a|^{2d}$ cancels, it follows that
\begin{align*}
    &(|a|+r)^{2d}-(|a|-r)^{2d} \leq |a|^{2d-1}\left((1+r)^{2d}-(1-r)^{2d}\right).\\
\intertext{Furthermore, by subadditivity, the Cantor function $\pazocal{G}_{n,M,\overline{\mathcal{A}}}(m\cdot x)\leq m\cdot\pazocal{G}_{n,M,\overline{\mathcal{A}}}(x)$ for any $m\in \mathbb{N}$. In general, this means}
    &\pazocal{G}_{n,M,\overline{\mathcal{A}}}(m\cdot x) \leq (m+1)\cdot\pazocal{G}_{n,M,\overline{\mathcal{A}}}(x) \ \ \forall \ \ m>0.
\end{align*}
With these observations inequality \eqref{case_2_radial_Cantor_iterate_intermediate_calculation} simplifies to 
\begin{align}
    \big|\mathcal{C}^{2d}_{n}(R,M,\mathcal{A})\cap A_{r}(|a|)\big| &\leq \frac{\pi^d}{d!}\left(|a|^{2d-1}+1\right) \big|\pazocal{C}_{n}(R^{2d}, M,\overline{\mathcal{A}})\cap\left[0,(1+r)^{2d}-(1-r)^{2d}\right]\big|\nonumber\\
    &\leq \gamma_{2} \cdot \left(|a|^{2d-1}+1\right) \left(\frac{|\mathcal{A}|}{M}\right)^{\frac{n}{2}} \ \ \text{(by Lemma \ref{lemma_cantor_function_decay})}
    \label{case_2_radial_Cantor_iterate_annulus_bound}
\end{align}
for some constant $\gamma_2>0$ \textit{independent} of $|a|$. Evidently, the right-hand side of the above inequality is unbounded in terms of $|a|$. The ball $B_{r}(a)$, however, only represents a fraction of the annulus $A_{r}(|a|)$, which warrants a closer comparison between $|\mathcal{C}^{2d}_{n}(\dots)\cap A_{r}(|a|)|$ and $|\mathcal{C}^{2d}_{n}(\dots)\cap B_{r}(a)|$.
Let $\partial B_{\eta}:=\{z\in\mathbb{R}^{2d} \ | \ |z|=\eta\}$ denote the $(2d-1)$-dimensional sphere of radius $\eta>0$ with associated surface area $|\partial B_{\eta}|$. We consider the optimal surface quotient
\begin{align*}
    \max \left\{\frac{|\partial B_{\eta}\cap B_{r}(a)|}{|\partial B_{\eta}|} \ \Big| \ \eta \in \big[|a|-r,|a|+r\big] \right\}\leq \alpha\cdot\left(\frac{r}{|a|}\right)^{2d-1}
\end{align*}
for some constant $\alpha>0$ only dependent on $N\gg 1$ and $d$. Hence, 
\begin{align*}
    |\mathcal{C}^{2d}_{n}(R,M,\mathcal{A})\cap B_{r}(a)| \leq \alpha\cdot \left(\frac{r}{|a|}\right)^{2d-1}|\mathcal{C}^{2d}_{n}(R,M,\mathcal{A})\cap A_{r}(|a|)|,
\end{align*}
which, combined with \eqref{case_2_radial_Cantor_iterate_annulus_bound}, yields the desired conclusion of part (i). 

Part (ii) turns out to be much simpler. Since any ball $B_{r}(a)$ is contained in some shifted hypercube $a + [-r,r]^{2d}$, we consider the maximal Nyquist density $\rho\big(\pazocal{C}_{n_j}(L_j,M_j,\mathcal{A}_j),r)$ in each direction $j=1,2,\dots,2d$. In total, we obtain 
\begin{align*}
    \rho(\Omega,r) \leq \prod_{j=1}^{2d}\rho\big(\pazocal{C}_{n_j}(L_j,M_j,\mathcal{A}_j),r),
\end{align*}
where the desired conclusion follows once we apply the 1-dimensional result Lemma \ref{lemma_cantor_function_decay}.
\end{proof}
\end{lemma}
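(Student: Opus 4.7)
The plan is to tackle the two constructions separately, with part (ii) serving as the simpler warm-up and part (i) requiring genuine geometric care. In both cases the task reduces, by the definition of $\rho$, to producing a bound on $|\Omega\cap B_r(a)|$ that is uniform in the center $a$.

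For part (ii) I would observe that every ball $B_r(a)$ is contained in the axis-aligned hypercube $a+[-r,r]^{2d}$, and the Cartesian product structure separates the intersection into a product over coordinates:
\begin{align*}
    |\Omega\cap B_r(a)|\leq \prod_{j=1}^{2d}\big|\pazocal{C}_{n_j}(L_j,M_j,\mathcal{A}_j)\cap[a_j-r,a_j+r]\big|.
\end{align*}
It then suffices to bound each factor uniformly in $a_j$, which follows from weak subadditivity of the Cantor function (Lemma \ref{lemma_Cantor_function_weak_subadditivity_property}) combined with Lemma \ref{lemma_cantor_function_decay} applied to the fixed length $2r$ and the canonical alphabet $\overline{\mathcal{A}_j}$, since each $L_j$ satisfies $(I_{M_j})$ with the common constants $c_1,c_2$.

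Part (i) is more delicate because the radial iterate is a union of concentric annuli whose total radial extent grows like $R(n)\sim M^{n/2}$. My plan is a geometric dichotomy: fix a threshold $N\gg 1$ depending only on $r$ and $d$, and split on whether $|a|\leq Nr$ or $|a|>Nr$. In the inner case, $B_r(a)\subseteq B_{(N+1)r}(0)$, and after the radial substitution $\eta\mapsto \eta^{2d}$ the intersection equals $(\pi^d/d!)\,|\pazocal{C}_n(R^{2d},M,\mathcal{A})\cap[0,((N+1)r)^{2d}]|$, which Lemma \ref{lemma_cantor_function_decay} controls by a constant multiple of $(|\mathcal{A}|/M)^{n/2}$.

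The outer case $|a|>Nr$ is where the main obstacle lies. The ball sits inside the thin annulus $A_r(|a|):=\{z:\,||z|-|a||\leq r\}$, whose radial projection is $[(|a|-r)^{2d},(|a|+r)^{2d}]$, an interval whose length is $\asymp |a|^{2d-1}$ and therefore grows without bound. I would first apply Lemma \ref{lemma_Cantor_function_weak_subadditivity_property} to translate this to the interval $[0,(|a|+r)^{2d}-(|a|-r)^{2d}]$ and the canonical alphabet $\overline{\mathcal{A}}$, then use the elementary inequality $\pazocal{G}_{n,M,\overline{\mathcal{A}}}(mx)\leq (m+1)\pazocal{G}_{n,M,\overline{\mathcal{A}}}(x)$ together with Lemma \ref{lemma_cantor_function_decay} to produce a bound of the form $(|a|^{2d-1}+1)(|\mathcal{A}|/M)^{n/2}$ for the whole annulus. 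The decisive second step, which I expect to be the hard part, is the observation that $B_r(a)$ occupies only a small solid angle of $A_r(|a|)$: for each radius $\eta\in[|a|-r,|a|+r]$ the spherical cap $\partial B_\eta\cap B_r(a)$ has $(2d-1)$-dimensional measure at most $\alpha(r/|a|)^{2d-1}|\partial B_\eta|$ for a constant $\alpha=\alpha(N,d)$. Integrating in $\eta$ contributes the factor $(r/|a|)^{2d-1}$, which exactly cancels the $|a|^{2d-1}$ growth and leaves a bound uniform in $a$. Combining the inner and outer estimates yields (i) with all remaining $r$- and $d$-dependent constants absorbed into $\gamma_r$.
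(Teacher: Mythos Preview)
Your proposal is correct and follows essentially the same route as the paper's proof: the same inner/outer dichotomy on $|a|$ for part (i), the same annulus bound via Lemma~\ref{lemma_Cantor_function_weak_subadditivity_property} and Lemma~\ref{lemma_cantor_function_decay} producing the $(|a|^{2d-1}+1)$ factor, and the same spherical-cap solid-angle argument to cancel it; part (ii) likewise matches the paper's hypercube-and-product reduction. The only cosmetic difference is that you treat (ii) first and make the use of weak subadditivity in (ii) explicit, whereas the paper leaves it implicit in its appeal to the one-dimensional density bound.
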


\section{Fractal Uncertainty Principle for Gabor Multipliers}
\label{section_FUP_Gabor_multipliers}
In this section we present one simple translation of Theorem \ref{theorem_FUP_Fock_space} to Gabor multipliers. Specifically, since the previous theorems are based on the Gaussian window $\phi_0$, we proceed with Gabor multipliers on the form \eqref{Gabor_multiplier_subset_formula} with $\phi_0$ as generating function, i.e.,
\begin{align}
    \mathcal{G}_{\Lambda, \Omega}^{\phi_0}f=|A_{\Lambda}|\sum_{\lambda\in\Lambda_{\Omega}}\langle f, \pi(\lambda)\phi_0\rangle \pi(\lambda)\phi_0
    \label{Gaussian_Gabor_multiplier_def}
\end{align}
for some subset $\Omega\subseteq\mathbb{R}^{2d}$ and lattice $\Lambda$. Furthermore, since we consider an $h$-dependent family $\Omega(h)$ of sets for the FUP, we also let the lattice $\Lambda$ depend on the parameter $0<h\leq 1$, meaning, we let $\Lambda(h)$ become sufficiently dense so to capture the fractal details of $\Omega(h)$. In particular, we consider the following condition: 
\begin{definition}
We say that an $h$-dependent family of lattices $\Lambda(h)\subseteq\mathbb{R}^{2d}$ for $0<h\leq 1$ satisfies condition $(H)$ with constant $L>0$ if the fundamental region, $A_{\Lambda(h)}$, satisfies the inclusion
\begin{align*}
     A_{\Lambda(h)} \subseteq B_{h L}(0) \ \ \forall \ \ 0<h\leq 1.
\end{align*}
\end{definition}

Utilizing condition ($H$), we formulate the FUP for Gaussian Gabor multipliers: 

\begin{theorem}
\label{theorem_FUP_Gabor_multipliers}
\textnormal{(FUP Gaussian Gabor multipliers)} Let $0<h\leq 1$,  and suppose $\Omega(h)\subseteq \mathbb{R}^{2d}$ is an $h$-dependent family of sets which is $\nu$-porous on scales $h$ to $1$. Let $\pazocal{G}_{\Lambda(h),\Omega(h)}^{\phi_0}:L^2(\mathbb{R}^d)\to L^2(\mathbb{R}^d)$ denote the Gaussian Gabor multiplier defined in \eqref{Gaussian_Gabor_multiplier_def}, whose lattice $\Lambda(h)$ satisfies condition $(H)$ with constant $L>0$. Then there exists constants $C, \beta>0$ only dependent on $\nu, L$ (and $d$) such that the operator norm is bounded by
\begin{align*}
    \| \mathcal{G}_{\Lambda(h),\Omega(h)}^{\phi_0}\|_{\text{op}}\leq C h^{\beta} \ \ \forall \ \ 0<h\leq 1. 
\end{align*}
\end{theorem}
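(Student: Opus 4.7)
The plan is to control $\|\mathcal{G}^{\phi_0}_{\Lambda(h),\Omega(h)}\|_{\text{op}}$ by the norm of Daubechies' localization operator on a mildly thickened version of $\Omega(h)$, and then invoke Theorem~\ref{theorem_FUP_STFT_modulation_spaces}. Since $\mathcal{G}^{\phi_0}_{\Lambda(h),\Omega(h)}$ is positive and self-adjoint, its operator norm equals
\[
\sup_{\|f\|_{L^2}=1}\langle\mathcal{G}^{\phi_0}_{\Lambda(h),\Omega(h)}f,f\rangle \;=\; \sup_{\|f\|_{L^2}=1}|A_{\Lambda(h)}|\sum_{\lambda\in\Lambda_{\Omega(h)}}|V_{\phi_0}f(\lambda)|^2,
\]
so it suffices to dominate this discrete quadratic form by a continuous integral.

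The first step is to replace each sample $|V_{\phi_0}f(\lambda)|^2$ by a ball average. Identity \eqref{Bargmann_transform_STFT_identity} gives $|V_{\phi_0}f(z)|^2 = |\pazocal{B}f(\bar z)|^2 e^{-\pi|z|^2}$, so Lemma~\ref{lemma_subaveraging_in_Fock_space} applied with $p=2$, after a conjugation of the integration variable, yields
\[
|V_{\phi_0}f(\lambda)|^2\;\leq\; K_R(d)\int_{B_R(\lambda)}|V_{\phi_0}f(\xi)|^2\,\mathrm{d}A(\xi), \qquad K_R(d):=\left(1-e^{-\pi R^2}\sum_{j=0}^{d-1}\frac{(\pi R^2)^j}{j!}\right)^{-1},
\]
for every $R>0$. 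Summing over $\lambda\in\Lambda_{\Omega(h)}$ and swapping sum and integral by Fubini, I would invoke two geometric consequences of condition $(H)$: the lattice-point count
\[
|A_{\Lambda(h)}|\cdot\#\{\lambda\in\Lambda(h)\cap B_R(\xi)\}\;\leq\;|B_{R+hL}(0)|
\]
(the disjoint cells $\lambda+A_{\Lambda(h)}$ all lie inside $B_{R+hL}(\xi)$), together with the inclusion $\Lambda_{\Omega(h)}\subseteq\Omega(h)_{hL}$, which forces $\bigcup_{\lambda\in\Lambda_{\Omega(h)}}B_R(\lambda)\subseteq\Omega(h)_{R+hL}$. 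Combining these yields the master inequality
\[
\|\mathcal{G}^{\phi_0}_{\Lambda(h),\Omega(h)}\|_{\text{op}}\;\leq\;K_R(d)\,|B_{R+hL}(0)|\,\cdot\,\|P^{\phi_0}_{\Omega(h)_{R+hL}}\|_{\text{op}}.
\]

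The final step is a balancing of $R$, and this is the main obstacle I expect: too small an $R$ makes the subaveraging constant $K_R(d)\sim d!/(\pi R^2)^d$ blow up, while too large an $R$ destroys the porosity of $\Omega(h)_{R+hL}$. The natural compromise is $R=h$. A direct Taylor expansion of $K_R(d)$ shows that $K_h(d)|B_{h(L+1)}(0)|$ tends to the finite constant $(L+1)^{2d}$ as $h\to 0$, and is therefore uniformly bounded on $(0,1]$. For the thickening, Lemma~\ref{lemma_thickening_porous_set} applied with auxiliary radius $R'=2h(L+1)/\nu$ (valid once $h(L+1)<\nu$) shows $\Omega(h)_{h(L+1)}$ to be $(\nu/2)$-porous on scales $2h(L+1)/\nu$ to $1$. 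Theorem~\ref{theorem_FUP_STFT_modulation_spaces} then delivers $\|P^{\phi_0}_{\Omega(h)_{h(L+1)}}\|_{\text{op}}\leq C h^\beta$ with constants depending only on $\nu, L, d$. The complementary range $h$ bounded away from zero is handled trivially by enlarging $C$, since the same subaveraging argument applied with $\Omega=\mathbb{R}^{2d}$ (so that $P^{\phi_0}_{\mathbb{R}^{2d}}=\text{id}$) already gives a uniform $h$-independent bound on $\|\mathcal{G}^{\phi_0}_{\Lambda(h),\Omega(h)}\|_{\text{op}}$.
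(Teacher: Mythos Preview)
Your argument is correct and essentially the same as the paper's: both use the subaveraging inequality (Lemma~\ref{lemma_subaveraging_in_Fock_space} with $p=2$) at radius $R=h$ to dominate the discrete sum by a continuous integral over the $h(L+1)$-thickening of $\Omega(h)$, then invoke Lemma~\ref{lemma_thickening_porous_set} and the Fock/modulation space FUP. The only cosmetic differences are that the paper passes to the Fock space via the Bargmann transform (hence the conjugate set $\Omega^*$) and packages the overlap count as the quantity $\gamma(R,\Lambda)$, while you stay on the STFT side and use the cell-packing bound $|A_{\Lambda(h)}|\cdot\#\{\lambda\in\Lambda(h)\cap B_R(\xi)\}\leq|B_{R+hL}(0)|$; these are equivalent, and your computation $K_h(d)\,|B_{h(L+1)}(0)|\to(L+1)^{2d}$ is exactly the statement $\kappa_d(\pi h^2)\to 1$ in the paper's notation.
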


First, we reformulate the problem to an estimate in the Fock space. 

\begin{lemma}
\label{lemma_Gabor_multiplier_Bargmann_transform}
Let $\Omega^{*}:=\{z \in \mathbb{C}^d \ | \ \overline{z}\in\Omega\}$ denote the complex conjugate set of $\Omega \subseteq\in\mathbb{R}^{2d}$. The operator norm of the Gaussian Gabor multiplier $\mathcal{G}_{\Lambda,\Omega}^{\phi_0}:L^2(\mathbb{R}^{d})\to L^2(\mathbb{R}^{d})$ is given by 
\begin{align*}
    \| \mathcal{G}_{\Lambda,\Omega}^{\phi_{0}} \|_{\text{op}} = \sup_{F\in \pazocal{F}^2(\mathbb{C}^d),\  \|F\|_{\pazocal{L}^2}=1}|A_{\Lambda}|\sum_{\lambda \in \Lambda_{\Omega^{*}}}|F(\lambda)|^{2}e^{-\pi|\lambda|^2}.
\end{align*}
\begin{proof}
By Cauchy-Schwarz' inequality on $\ell^2$-sequences, the operator norm is given by 
\begin{align*}
    \| \mathcal{G}_{\Lambda, \Omega}^{\phi_0}\|_{\text{op}} = \sup_{\|f\|_{2}=1}\langle \mathcal{G}_{\Lambda, \Omega}^{\phi_0} f,f\rangle = \sup_{\|f\|_{2}=1}|A_{\Lambda}|\sum_{\lambda\in\Lambda_{\Omega}}|\langle f,\pi(\lambda)\phi_0\rangle|^2.
\end{align*}
By identity \eqref{Bargmann_transform_STFT_identity}, each term can be expressed in terms of the Bargmann transform, namely $|\langle f, \pi(\overline{\lambda})\phi_{0}\rangle|^2 = |\pazocal{B}f(\lambda)|^2 e^{-\pi|\lambda|^2}$, and since the Bargmann transform $\pazocal{B}:L^2(\mathbb{R}^d)\to \pazocal{F}^2(\mathbb{C}^d)$ is an isometry onto the Fock space, the result follows.
\end{proof}
\end{lemma}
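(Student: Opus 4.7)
The plan is to establish the operator-norm identity in four conceptual steps. First, I would observe that $\mathcal{G}_{\Lambda,\Omega}^{\phi_0}$ is a bounded, self-adjoint, positive operator. Boundedness is inherited from the Bessel bound of the Gaussian Gabor system at $\Lambda$, while positivity and self-adjointness are transparent from the computation
\begin{align*}
\langle \mathcal{G}_{\Lambda, \Omega}^{\phi_0} f, f\rangle = |A_{\Lambda}| \sum_{\lambda \in \Lambda_{\Omega}} |\langle f, \pi(\lambda)\phi_0\rangle|^2 \geq 0.
\end{align*}
The standard fact that $\|T\|_{\text{op}} = \sup_{\|f\|_2 = 1}\langle Tf, f\rangle$ for a positive self-adjoint bounded operator then reduces the problem to analysing this bilinear form on the unit sphere (this is what the author's passing remark ``Cauchy--Schwarz on $\ell^2$-sequences'' is meant to invoke).

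Second, I would convert each inner product $|\langle f, \pi(\lambda)\phi_0\rangle|^2$ into a quantity involving the Bargmann transform via identity \eqref{Bargmann_transform_STFT_identity}. Writing $\lambda = (x, \omega) \in \mathbb{R}^{2d}$ and identifying it with the complex point $x + i\omega \in \mathbb{C}^d$, the identity (applied with $\omega \mapsto -\omega$) yields $|\langle f, \pi(\lambda)\phi_0\rangle|^2 = |\pazocal{B}f(\overline{\lambda})|^2 e^{-\pi|\lambda|^2}$, where $\overline{\lambda}$ is the complex conjugate of the point representing $\lambda$.

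Third, I would re-index the summation by $\mu := \overline{\lambda}$. Complex conjugation on $\mathbb{C}^d$ is an isometric involution which preserves the (rectangular) lattice $\Lambda$ together with its fundamental region $A_{\Lambda}$, so the condition $|\Omega \cap (\lambda + A_{\Lambda})| > 0$ is equivalent to $|\Omega^* \cap (\mu + A_{\Lambda})| > 0$. Thus, as $\lambda$ traverses $\Lambda_{\Omega}$, $\mu$ traverses $\Lambda_{\Omega^*}$, and since $|\mu| = |\lambda|$ the bilinear form becomes $|A_{\Lambda}| \sum_{\mu \in \Lambda_{\Omega^*}} |\pazocal{B}f(\mu)|^2 e^{-\pi|\mu|^2}$. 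Finally, since $\pazocal{B} : L^2(\mathbb{R}^d) \to \pazocal{F}^2(\mathbb{C}^d)$ is an isometric isomorphism, taking the supremum over unit $f \in L^2(\mathbb{R}^d)$ coincides with taking the supremum over unit $F = \pazocal{B}f \in \pazocal{F}^2(\mathbb{C}^d)$, which is precisely the claimed identity.

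The main bookkeeping obstacle lies in step three: one must verify that the relabelling $\lambda \mapsto \overline{\lambda}$ genuinely implements a bijection $\Lambda_{\Omega} \leftrightarrow \Lambda_{\Omega^*}$, which hinges on the conjugation-invariances $\overline{\Lambda} = \Lambda$ and $\overline{A_{\Lambda}} = A_{\Lambda}$. For the standard rectangular Gabor lattices implicit throughout the paper these symmetries are manifest, but a fully rigorous write-up should state this assumption, or else phrase the right-hand side in terms of the conjugate lattice $\overline{\Lambda}$ with its own fundamental region.
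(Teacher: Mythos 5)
Your proposal is correct and follows essentially the same route as the paper: reduce the operator norm to the quadratic form via positivity and self-adjointness, convert each term with the Bargmann--STFT identity \eqref{Bargmann_transform_STFT_identity}, re-index by complex conjugation to pass from $\Lambda_{\Omega}$ to $\Lambda_{\Omega^*}$, and invoke the isometry $\pazocal{B}:L^2(\mathbb{R}^d)\to\pazocal{F}^2(\mathbb{C}^d)$. Your extra remark about needing $\overline{\Lambda}=\Lambda$ and conjugation-invariance of $A_{\Lambda}$ for the relabelling is a legitimate detail that the paper's terser proof leaves implicit, and it holds for the rectangular lattices used throughout.
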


\begin{proof}
(Theorem \ref{theorem_FUP_Gabor_multipliers}) By Lemma \ref{lemma_Gabor_multiplier_Bargmann_transform}, we consider the sum $|A_{\Lambda}|\sum_{\lambda}|F(\lambda)|^{2} e^{-\pi|\lambda|^2}$ for any normalized $F\in\pazocal{F}^2(\mathbb{C}^{d})$. By the subaveraging property in Lemma \ref{lemma_subaveraging_in_Fock_space}, for every $R>0$
\begin{align*}
    \sum_{\lambda}|F(\lambda)|^{2} e^{-\pi|\lambda|^2}&\leq \left(1-e^{-\pi R^2}\sum_{j=0}^{d-1}\frac{(\pi R^2)^j}{j!}\right)^{-1}\sum_{\lambda}\int_{B_{R}(\lambda)}|F(\xi)|^2 e^{-\pi|\xi|^2}\mathrm{d}A(\xi)\\
    &\leq \left(1-e^{-\pi R^2}\sum_{j=0}^{d-1}\frac{(\pi R^2)^j}{j!}\right)^{-1} \gamma(R,\Lambda)\cdot \int_{\bigcup_{\lambda}B_{R}(\lambda)}|F(\xi)|^2 e^{-\pi|\xi|^2}\mathrm{d}A(\xi)
\end{align*}
where 
\begin{align*}
    \gamma(R,\Lambda) := \sup_{z\in A_{\Lambda}}\mathbf{card} \big\{ \lambda \in \Lambda \ \big| \ |B_{R}(z)\cap(A_{\Lambda}+\lambda)|>0\big\}
\end{align*}
takes into account the possible \textit{overlap} between the balls $B_{R}(\lambda)$. 
In particular, for $R=h$ and by the condition $A_{\Lambda(h)}\subseteq B_{Lh}(0)$, there clearly exists some finite constant $C_{d}>0$ dependent on $d, L$ and otherwise independent of the lattice structure so that 
\begin{align}
    &\gamma(h,\Lambda(h)) \leq C_{d} \frac{|B_{h}(0)|}{|A_{\Lambda(h)}|} \ \ \forall 
    \ \ 0<h\leq 1.\nonumber
\end{align} 
In total, we obtain
\begin{align*}
    |A_{\Lambda(h)}|\sum_{\lambda\in \Lambda_{\Omega^{*}}(h)}|F(\lambda)|^2 e^{-\pi|\lambda|^2}\leq C_{d}\cdot \kappa_{d}(\pi h^{2}) \int_{\bigcup_{\lambda\in\Lambda_{\Omega^{*}(h)}}B_{h}(\lambda)}|F(\xi)|^2 e^{-\pi|\xi|^2}\mathrm{d}A(\xi),
\end{align*}
where $\kappa_{d}(x)$ was defined in \eqref{thickening_constant_kappa}. Since $\kappa_{d}(x)$ is continuous for $x>0$ and $\lim_{x\to 0}\kappa_{d}(x) =1$, this factor is simply absorbed by the multiplicative constant $C>0$ of the theorem. 

Now, by definition \eqref{lattice_restriction_to_set} for the subset $\Lambda_{\Omega^*}(h)\subseteq \Lambda(h)$ and since $A_{\Lambda(h)}\subseteq B_{h L}(0)$, the union $\bigcup_{\lambda\in\Lambda_{\Omega^*}(h)}B_{h}(\lambda)$ must be contained in the $r$-thickened set $\Omega_{r}(h)$ for $r:=(1+L)h$. By Lemma \ref{lemma_thickening_porous_set}, the thickened set $\Omega_{r}(h)$ is itself $\frac{\nu}{2}$-porous on scales $h_1:=\frac{4r}{\nu}h$ to $1$. Thus, we simply apply the FUP for the Fock space in Theorem \ref{theorem_FUP_Fock_space} on the family of sets $\Omega_{hr}(h)$ as $h_1\to 0$, from which the statement follows. 
\end{proof}

\begin{remark}
The question immediately arises whether the Gabor \textit{systems} $\{\pi(\lambda)\phi_0\}_{\lambda\in \Lambda(h)}$ with $\Lambda(h)$ satisfying condition ($H$) actually are Gabor \textit{frames} for all $0<h\leq 1$ or for sufficiently small $h$. In $d=1$, by the results obtained independently by Lyubarskii \cite{Lyubarskii_1992} and Seip and Wallsten \cite{Seip_1992_prtI}, \cite{Seip_Wallsten_1992_prtII}, we have a simple density criterion. Namely, that the system $\{\pi(\lambda)\phi_{0}\}_{\lambda\in\Lambda}$ is a Gabor frame if  and only if the fundamental region satisfies $|A_{\Lambda}|<1$. In higher dimensions $d>1$, characterizations of lattices $\Lambda$ that yield Gaussian Gabor frames becomes much more intricate, which has been studied in \cite{Grochenig_2011_Gaussian_Gabor_frames}, \cite{Grochenig_Lyubarskii_2020}, \cite{Luef_Wang_2021}. In particular, the density criterion does not translate, e.g., as shown in \cite{Luef_Wang_2021} Theorem 1.5, the lattice $\Lambda = (\mathbb{Z}\times \frac{1}{2}\mathbb{Z})^2$ does not generate a Gaussian Gabor frame even though $|A_{\Lambda}|=\frac{1}{4}<1$. Further, a sufficient condition is formulated in \cite{Luef_Wang_2021} Theorem 1.2 combining a density criterion with the notion of \textit{transcendental} lattices, which, as remarked by the authors, represents a large family of lattices in $\mathbb{C}^{d}$. Nonetheless, our estimates of the Gabor multiplier are not reliant on the operator being associated to a Gabor frame. 
\end{remark} 

We conclude this section with a simple example of Gabor multipliers based on Cantor sets, which also illustrates an alternative approach to choosing the lattice restriction $\Lambda_{\Omega}$. 

\begin{example}
(Cantor set) For simplicity, we let $d=1$ and consider the symmetric Cartesian product of Cantor iterates $\Omega:=\pazocal{C}_{n}(L,M,\mathcal{A})\times \pazocal{C}_{n}(L,M,\mathcal{A})$. For this case one obvious choice of lattices are square lattices $\Lambda = a\left(\mathbb{Z}\times \mathbb{Z}\right)$ with density $a\sim L M^{-n}$. Unsurprisingly, the restriction $\Lambda_{\Omega}$ closely resembles the Cartesian product of scaled \textit{discrete} Cantor iterates $\Omega^{(d)}:=[L\cdot\pazocal{C}_{n}^{(d)}(M,\mathcal{A})]^2$. Thus, for Gabor multipliers localizing on such Cartesian products, it seems more natural to consider sampled points directly from the already available discrete set, i.e., we consider the operator
\begin{align*}
    &\mathcal{G}_{\phi_0}(\Omega^{(d)})f:=(L M^{-n})^2\sum_{\lambda \in \left[L\cdot\pazocal{C}^{(d)}(M,\mathcal{A})\right]^2}\langle f, \pi(\lambda)\phi_{0}\rangle \pi(\lambda)\phi_{0}.
\intertext{In order to estimate the operator norm, we follow the same procedure as in the proof of Theorem \ref{theorem_FUP_Gabor_multipliers}. After one $\frac{1}{2}LM^{-n}$-thickening and then utilizing inequality \eqref{Nyquist_density_STFT_inequality}, we obtain for all $f\in L^2(\mathbb{R})$ and $r>0$ that}
    &\| \mathcal{G}_{\phi_0}(\Omega^{(d)})f\|_{2} \leq \frac{\kappa_1\big(\frac{\pi}{4}(L M^{-n})^2\big)}{1-e^{\frac{1}{2}\pi r^2}} \rho\left(\Omega^{(d)}+B_{\frac{1}{2}LM^{-n}}(0),r\right)\|f\|_2^2. 
\end{align*}
By definition of the discrete and "continuous" Cantor iterate, the Nyquist density must satisfy $\rho(\Omega^{(d)}+B_{\frac{1}{2}LM^{-n}}(0),r) \leq \rho(\Omega,r)$, so the operator norm is in turn bounded by
\begin{align*}
    \| \mathcal{G}_{\phi_0}(\Omega^{(d)})\|_{\text{op}} \leq \frac{\kappa_1\big(\frac{\pi}{4}(L M^{-n})^{2}\big)}{1-e^{\frac{1}{2}\pi r^2}}\rho(\Omega,r).
\end{align*}
If we now suppose the length $L$ depends on the iterates $n$ according to condition ($I_M$), we simply apply the estimate for the Nyquist density in Lemma \ref{lemma_nyquist_density_Cantor_sets} to retrieve the same asymptotic estimate for the operator norm as in Theorem \ref{theorem_FUP_for_Cantor_sets}. 
\end{example}

\appendix 

\section{Complex interpolation in Fock space}
\label{appendix_complex_interpolation}

We prove the following inequality between the Fock spaces $\pazocal{F}^1(\mathbb{C}^d)$ and $\pazocal{F}^p(\mathbb{C}^d)$ for $p\geq 1$. 
\begin{lemma}
\label{lemma_complex_interpolation_characteristic_function}
For any measurable subset $\Omega\subseteq \mathbb{C}^{d}$ and any $p\geq 1$, we have the inequality
\begin{align*}
    \sup_{F\in \pazocal{F}^p(\mathbb{C}^d)\setminus \{0\}} \frac{\|F\cdot \chi_{\Omega}\|_{\pazocal{L}^p}^p}{\|F\|_{\pazocal{L}^{p}}^p} \leq \sup_{F\in \pazocal{F}^1(\mathbb{C}^d)\setminus \{0\}} \frac{\|F\cdot \chi_{\Omega}\|_{\pazocal{L}^1}}{\|F\|_{\pazocal{L}^{1}}}.
\end{align*}
\end{lemma}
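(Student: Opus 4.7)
The plan is to recognise the stated bound as a Riesz--Thorin style interpolation inequality for the multiplication operator $T F := F \chi_\Omega$, viewed for each $p \in [1, \infty]$ as a bounded linear map $\pazocal{F}^{p}(\mathbb{C}^{d}) \to \pazocal{L}^{p}(\mathbb{C}^{d})$. Writing $M_{p} := \| T \|_{\pazocal{F}^{p} \to \pazocal{L}^{p}}$, the supremum on the left-hand side of the lemma is exactly $M_{p}^{p}$ while the supremum on the right-hand side is $M_{1}$, so the task reduces to the single inequality $M_{p} \leq M_{1}^{1/p}$.

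First I would handle the trivial endpoint $p = \infty$: for any $F \in \pazocal{F}^{\infty}(\mathbb{C}^{d})$,
\begin{align*}
    \| TF \|_{\pazocal{L}^{\infty}} = \esssup_{z \in \Omega} |F(z)|\, e^{-\pi|z|^{2}/2} \leq \| F \|_{\pazocal{L}^{\infty}},
\end{align*}
so $M_{\infty} \leq 1$. The other endpoint $p = 1$ is the right-hand side constant itself and requires nothing. Interpolation between the two endpoints would then deliver
\begin{align*}
    M_{p} \leq M_{1}^{1/p}\, M_{\infty}^{1 - 1/p} \leq M_{1}^{1/p},
\end{align*}
and raising both sides to the $p$-th power recovers the lemma.

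The only non-trivial obstacle is justifying the complex interpolation step on the \emph{holomorphic} Fock scale, since entirety of the test functions has to be preserved along the analytic family; the matching statement for the ambient codomain scale $\{\pazocal{L}^{p}(\mathbb{C}^{d})\}$ is the classical Stein--Weiss interpolation for weighted $L^{p}$ spaces and is free. I would follow the route laid out in Zhu's monograph for $d=1$, extended to $d \geq 1$ by polynomial density: the holomorphic polynomials are dense in $\pazocal{F}^{p}(\mathbb{C}^{d})$ for every $p < \infty$, so for each non-vanishing polynomial $F$ and target exponent $\theta = 1 - 1/p$ I would construct an analytic strip family $\{F_{\zeta}\}_{0 \leq \operatorname{Re}(\zeta) \leq 1}$ of entire functions, with $F_{\theta}$ recovering $F$ up to a harmless Gaussian rescaling and with boundary norms in $\pazocal{L}^{1}$ and $\pazocal{L}^{\infty}$ controlled by $\| F\|_{\pazocal{L}^{p}}$, and then apply the three-lines theorem to the pairing
\begin{align*}
    \zeta \;\longmapsto\; \int_{\Omega} F_{\zeta}(z)\, \overline{G_{\zeta}(z)}\, e^{-\pi |z|^{2}}\, dA(z)
\end{align*}
against a matched dual family $G_{\zeta}$. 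Removing the no-zero assumption on $F$ and passing back from polynomials to a general $F \in \pazocal{F}^{p}(\mathbb{C}^{d})$ are both routine approximation arguments, but carrying out this bookkeeping cleanly---in particular absorbing the mismatch between the $p$-dependent Gaussian weight in $\pazocal{L}^{p}$ and the fixed weights appearing in $\pazocal{L}^{1}$ and $\pazocal{L}^{\infty}$ via a $\sqrt{p}$-dilation absorbed into the constant $c(\zeta)$ of the family---is where the main technical effort lies. Once the analytic family is in place, the Riesz--Thorin inequality and hence the lemma follow mechanically.
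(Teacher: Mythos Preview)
Your proposal is correct and follows essentially the same route as the paper: set up the multiplication operator $T_\Omega:\pazocal{F}^p\to\pazocal{L}^p$, use the trivial endpoint bound $\|T_\Omega\|_{\pazocal{L}^\infty}\leq 1$, and apply complex interpolation between $p=1$ and $p=\infty$ to obtain $\|T_\Omega\|_{\pazocal{L}^p}\leq\|T_\Omega\|_{\pazocal{L}^1}^{1/p}$. The only difference is packaging: the paper invokes the Fock-space interpolation identity $[\pazocal{F}^{1},\pazocal{F}^{\infty}]_\theta=\pazocal{F}^{p}$ (and the Stein--Weiss analogue for $\pazocal{L}^p$) as black-box theorems from Zhu, whereas you sketch how one would reprove the Fock interpolation by hand via analytic families on polynomials.
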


The proof combines central results from \textit{complex interpolation}. 
An introduction to complex interpolation, based on Hadamard's three line theorem, can be found in \cite{Zhu_Operator_theory_in_function_spaces} Chapter 2. We follow the notation in \cite{Zhu_Operator_theory_in_function_spaces} and let $X_{\theta}=[X_0,X_1]_{\theta}$ for $\theta\in[0,1]$ denote the complex interpolation space between two (compatible) Banach spaces $X_0$ and $X_1$. 
To begin with, we present a general result regarding linear operators between interpolation spaces and the associated operator norm (from \cite{Zhu_Operator_theory_in_function_spaces} Theorem 2.4 (c) combined with subsequent remark).

\begin{theorem}
\label{theorem_complex_interpolation_linear_operator_estimate}
Suppose $X_{0}, X_{1}$ and $Y_{0},Y_{1}$ are compatible pairs of Banach spaces, and suppose the mapping
\begin{align*} 
    T: X_{0}+X_{1} \to Y_{0}+Y_{1}
\end{align*}
is bounded, linear such that $T: X_{k}\to Y_{k}$ is bounded with norm $\|T\|_{k} \leq M_{k}$ for $k=0,1$. Then the mapping $T$ satisfies $T: [X_0,X_1]_{\theta}\to [Y_0,Y_1]_{\theta}$ with norm estimate $\|T\|_{\theta}\leq M_{0}^{1-\theta}M_{1}^{\theta}$ for all $\theta\in[0,1]$.
\end{theorem}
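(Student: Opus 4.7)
The plan is to prove this via the standard three-lines technique underlying the Calderón complex interpolation method. Recall that $[X_0,X_1]_\theta$ is defined as the quotient $\mathcal{F}(X_0,X_1)/\{f:f(\theta)=0\}$ where $\mathcal{F}(X_0,X_1)$ denotes the Banach space of $(X_0+X_1)$-valued functions $f$ on the closed strip $S=\{z\in\mathbb{C}:0\le \Re z\le 1\}$ that are continuous and bounded on $S$, analytic on the interior, with $t\mapsto f(it)\in X_0$ and $t\mapsto f(1+it)\in X_1$ continuous and bounded, and equipped with the norm $\|f\|_\mathcal{F}=\max\bigl(\sup_t\|f(it)\|_{X_0},\,\sup_t\|f(1+it)\|_{X_1}\bigr)$. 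The interpolation norm is then $\|x\|_\theta=\inf\{\|f\|_\mathcal{F}:f(\theta)=x\}$. The endpoint cases $\theta\in\{0,1\}$ are immediate since $[X_0,X_1]_0=X_0$ and $[X_0,X_1]_1=X_1$ by construction, so I will focus on $\theta\in(0,1)$, and I may assume $M_0,M_1>0$ (otherwise $T\equiv 0$ after trivial reduction).

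Fix $x\in[X_0,X_1]_\theta$ and $\varepsilon>0$, and choose $f\in\mathcal{F}(X_0,X_1)$ with $f(\theta)=x$ and $\|f\|_\mathcal{F}\le\|x\|_\theta+\varepsilon$. The key construction is the auxiliary function
\begin{align*}
g(z):=M_0^{\,z-1}M_1^{-z}\,T\bigl(f(z)\bigr),\qquad z\in S.
\end{align*}
Since $T:X_0+X_1\to Y_0+Y_1$ is bounded and linear, $g$ inherits the analyticity, continuity, and boundedness properties of $f$, so $g:S\to Y_0+Y_1$ lies in the analogue space $\mathcal{F}(Y_0,Y_1)$. On the line $\Re z=0$, the scalar factor has modulus $M_0^{-1}$ and $T$ maps $X_0\to Y_0$ with norm $\le M_0$, so $\|g(it)\|_{Y_0}\le\|f(it)\|_{X_0}\le\|f\|_\mathcal{F}$; analogously, on $\Re z=1$ the scalar factor has modulus $M_1^{-1}$ and $T:X_1\to Y_1$ has norm $\le M_1$, yielding $\|g(1+it)\|_{Y_1}\le\|f\|_\mathcal{F}$. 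Hence $\|g\|_\mathcal{F}\le\|f\|_\mathcal{F}$.

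Evaluating at $z=\theta$, we compute $g(\theta)=M_0^{\theta-1}M_1^{-\theta}\,T(x)$, whence
\begin{align*}
\|T(x)\|_{[Y_0,Y_1]_\theta}=M_0^{1-\theta}M_1^{\theta}\,\|g(\theta)\|_{[Y_0,Y_1]_\theta}\le M_0^{1-\theta}M_1^{\theta}\,\|g\|_\mathcal{F}\le M_0^{1-\theta}M_1^{\theta}\bigl(\|x\|_\theta+\varepsilon\bigr).
\end{align*}
Letting $\varepsilon\to 0$ yields the claimed bound $\|T\|_\theta\le M_0^{1-\theta}M_1^{\theta}$.

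The main subtlety of the argument is verifying that the twisted function $g(z)$ genuinely belongs to $\mathcal{F}(Y_0,Y_1)$ — i.e.\ that multiplication by the entire scalar factor $M_0^{z-1}M_1^{-z}$ and post-composition with the linear operator $T$ preserve boundedness, continuity on the boundary lines into the correct spaces $Y_0,Y_1$, and analyticity in the interior; this is where the assumption that $T$ is globally bounded on $X_0+X_1$ is genuinely used, since otherwise analyticity of the composition in $(Y_0+Y_1)$-norm would be in doubt. Once this admissibility is established, the rest is a direct scalar normalization exploiting $|M_0^{z-1}M_1^{-z}|=M_0^{\Re z-1}M_1^{-\Re z}$.
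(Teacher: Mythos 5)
Your proof is correct: it is the standard Calder\'on three-lines argument, normalizing by the scalar factor $M_0^{z-1}M_1^{-z}$ and checking that the twisted function $g$ is admissible for $\mathcal{F}(Y_0,Y_1)$. Note that the paper itself gives no proof of this statement --- it is quoted verbatim from Zhu's \emph{Operator Theory in Function Spaces} (Theorem 2.4(c) and the following remark) --- so your argument simply supplies the standard proof of the cited result; the only cosmetic gap is the dismissal of the degenerate case $M_0M_1=0$, which is usually handled by replacing $M_k$ with $M_k+\delta$ and letting $\delta\to 0$.
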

In order to relate the above theorem to our context, we need to consider interpolation between weighted $L^p$-spaces and between Fock spaces. The next statements are all found in \cite{Zhu_Analysis_on_Fock_spaces} Chapter 2.4, formulated for spaces over $\mathbb{C}$ but easily generalized to $\mathbb{C}^d$. First, we consider the \textit{Stein-Weiss} interpolation theorem, first published in \cite{Stein_Weiss}, for weighted $L^p$-spaces.
\begin{theorem}
Suppose $w,w_0$ and $w_1$ are positive weight functions on $\mathbb{C}^{d}$. Then for any $1\leq p_0\leq p_1 \leq \infty$ and $\theta\in [0,1]$, we have that 
\begin{align*}
    \left[L^{p_0}(\mathbb{C}^{d},w_0\mathrm{d}A), L^{p_1}(\mathbb{C}^{d}, w_1\mathrm{d}A)\right]_{\theta} = L^{p}(\mathbb{C}^{d},w\mathrm{d}A),
\end{align*}
where
\begin{align*}
    \frac{1}{p} = \frac{1-\theta}{p_{0}}+\frac{\theta}{p_1} \ \ \text{and } \ w^{\frac{1}{p}} = w_0^{\frac{1-\theta}{p_0}}w_1^{\frac{\theta}{p_1}}.
\end{align*}
\end{theorem}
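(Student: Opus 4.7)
The plan is to reduce the weighted statement to the classical unweighted interpolation identity $[L^{p_0}(\mathbb{C}^d,\mathrm{d}A),L^{p_1}(\mathbb{C}^d,\mathrm{d}A)]_{\theta}=L^{p}(\mathbb{C}^d,\mathrm{d}A)$, which is the standard Riesz--Thorin-type result available from the same reference. The bridge is the key identity $w^{1/p}=w_0^{(1-\theta)/p_0}w_1^{\theta/p_1}$, which is precisely the interpolation between the weights raised to the corresponding exponents, and which suggests that $w_k^{z/p_k}$ should be thought of as the natural analytic family of multipliers on the strip $\{0\leq\mathrm{Re}\,z\leq 1\}$. Since each $w_k$ is strictly positive, $\log w_k$ is real-valued and $w_k^{z}:=e^{z\log w_k}$ is entire in $z$ with $|w_k^{it}|=1$ for every $t\in\mathbb{R}$, which is what allows the weight factors to disappear on the boundary of the strip.

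For the inclusion $L^{p}(\mathbb{C}^d,w\,\mathrm{d}A)\subseteq[L^{p_0}(w_0\,\mathrm{d}A),L^{p_1}(w_1\,\mathrm{d}A)]_{\theta}$, I would start with $f\in L^p(w\,\mathrm{d}A)$ and set $g:=f\cdot w^{1/p}\in L^p(\mathbb{C}^d,\mathrm{d}A)$, which is an isometric correspondence. By the unweighted case, there exists an admissible analytic $L^{p_0}+L^{p_1}$-valued function $G(z)$ on the closed strip with $G(\theta)=g$ and boundary bounds $\|G(it)\|_{L^{p_0}},\|G(1+it)\|_{L^{p_1}}\leq \|g\|_{L^p}+\varepsilon$. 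Define
\begin{align*}
F(z):=G(z)\cdot w_0^{-(1-z)/p_0}\cdot w_1^{-z/p_1}.
\end{align*}
The weight identity gives $F(\theta)=g\cdot w^{-1/p}=f$; on the imaginary axis, $|F(it)|=|G(it)|\,w_0^{-1/p_0}$, so a direct change-of-weight computation yields $\|F(it)\|_{L^{p_0}(w_0\,\mathrm{d}A)}=\|G(it)\|_{L^{p_0}(\mathrm{d}A)}$, and similarly on the line $\mathrm{Re}\,z=1$. This shows $f$ lies in the interpolation space with $\|f\|_{\theta}\leq \|f\|_{L^p(w\,\mathrm{d}A)}$.

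For the reverse inclusion I would run the construction in the opposite direction: given $f$ in the interpolation space realised by some admissible $F(z)$, set $G(z):=F(z)\cdot w_0^{(1-z)/p_0}\cdot w_1^{z/p_1}$, check that $G(\theta)=f\cdot w^{1/p}$ and that the boundary $L^{p_k}(\mathrm{d}A)$-norms of $G$ coincide with the $L^{p_k}(w_k\,\mathrm{d}A)$-norms of $F$, and invoke the unweighted interpolation identity to conclude $g:=G(\theta)\in L^p(\mathbb{C}^d,\mathrm{d}A)$, whence $f=g\cdot w^{-1/p}\in L^p(w\,\mathrm{d}A)$ with the matching norm bound. Combining both directions gives the set equality and equivalence of norms, i.e.\ the stated equality of interpolation spaces.

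The main obstacle is verifying that $F(z)$ (and $G(z)$ in the reverse direction) qualify as admissible functions in the precise definition of complex interpolation, namely continuity on the closed strip into $L^{p_0}(w_0)+L^{p_1}(w_1)$, analyticity in the open strip, and decay at $|\mathrm{Im}\,z|\to\infty$. Because $\log w_k$ need not be bounded, one cannot simply claim a uniform bound on $|w_k^{z}|$ on the strip; one only controls $|w_k^{z}|=w_k^{\mathrm{Re}\,z}$ pointwise, which is integrable against the opposing weight in the correct combination. The cleanest way around this is to first verify the identity for simple functions $f$ with $w$-essentially bounded support, where the analyticity and decay of $F(z)$ are immediate, and then pass to the general case by density in both sides — the density argument is routine once the simple-function case is in place.
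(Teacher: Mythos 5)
Your argument is essentially correct, but note that the paper does not prove this statement at all: it is the Stein--Weiss interpolation theorem, quoted with references to the original paper and to Zhu's book (Chapter 2.4), so there is no in-paper proof to compare against. What you have written is, in substance, the standard textbook proof (cf.\ Bergh--L\"ofstr\"om, Theorem 5.5.3), organized slightly differently: rather than writing down the explicit analytic family attached to a simple function, you conjugate an arbitrary admissible family for the unweighted couple by the entire, unimodular-on-the-boundary multipliers $w_0^{\pm(1-z)/p_0}w_1^{\pm z/p_1}$. Your boundary computations $\|F(it)\|_{L^{p_0}(w_0\mathrm{d}A)}=\|G(it)\|_{L^{p_0}(\mathrm{d}A)}$ and the evaluation $F(\theta)=f$ via $w^{1/p}=w_0^{(1-\theta)/p_0}w_1^{\theta/p_1}$ are exactly right, and you correctly identify the one genuine technical obstacle, namely that multiplication by fractional powers of unbounded weights need not preserve admissibility on the open strip; restricting first to simple functions supported where the weights are bounded above and below and then passing to the limit is the standard repair. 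Two small points to tie down: for the reverse inclusion the density argument must run inside the interpolation space, which is legitimate because $X_0\cap X_1$ is dense in $[X_0,X_1]_\theta$ for $0<\theta<1$; and at the endpoint $p_1=\infty$ (which is the case the paper actually uses, interpolating $\pazocal{L}^1$ against $\pazocal{L}^\infty$) the weighted $L^\infty$ norm has to be read as $\esssup |f|\,\tilde w_1$ with $\tilde w_1$ playing the role of $w_1^{1/p_1}$, so that the multiplier trick still produces unimodular factors on the line $\mathrm{Re}\,z=1$; with that convention your scheme goes through unchanged.
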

In particular, for the $L^p$-spaces with Gaussian measures, $\pazocal{L}^p(\mathbb{C}^d)$, we obtain:  
\begin{corollary}
\label{corollary_interpolation_Gaussian_weighted_Lp_spaces}
For any $1\leq p_0 \leq p_1 \leq \infty$ and $\theta\in[0,1]$, we have that
\begin{align*}
    \left[\pazocal{L}^{p_0}(\mathbb{C}^d), \pazocal{L}^{p_1}(\mathbb{C}^d) \right]_{\theta} = \pazocal{L}^{p}(\mathbb{C}^d), \ \ \text{where} \ \ \frac{1}{p} = \frac{1-\theta}{p_{0}}+\frac{\theta}{p_1}.
\end{align*}
\end{corollary}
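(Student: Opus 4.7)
The plan is to recognize the statement as a direct corollary of the Stein--Weiss interpolation theorem, specialized to the Gaussian weights defining the spaces $\pazocal{L}^p(\mathbb{C}^d)$. First I would unpack the definitions: by construction, $\pazocal{L}^{p_j}(\mathbb{C}^d) = L^{p_j}(\mathbb{C}^d, w_j \, \mathrm{d}A)$ with weight $w_j(z) := e^{-\frac{p_j}{2}\pi|z|^2}$ for $j=0,1$, and the target space is $\pazocal{L}^p(\mathbb{C}^d) = L^p(\mathbb{C}^d, w\,\mathrm{d}A)$ with $w(z) = e^{-\frac{p}{2}\pi|z|^2}$. Thus the entire task reduces to checking that the weight formula in Stein--Weiss produces precisely this target weight.

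Next I would verify the algebraic identity $w^{1/p}(z) = w_0^{(1-\theta)/p_0}(z)\, w_1^{\theta/p_1}(z)$ demanded as a hypothesis in Stein--Weiss. The key observation is that $w_j(z)^{1/p_j} = e^{-\frac{\pi}{2}|z|^2}$ is independent of $j$, since the factor $p_j$ in the exponent exactly cancels the outer exponent $1/p_j$. Consequently
\begin{align*}
    w_0^{(1-\theta)/p_0}(z)\, w_1^{\theta/p_1}(z) = e^{-\frac{\pi}{2}|z|^2\left[(1-\theta)+\theta\right]} = e^{-\frac{\pi}{2}|z|^2} = w(z)^{1/p},
\end{align*}
which matches the Stein--Weiss prescription exactly. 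Combined with the scalar relation $\frac{1}{p} = \frac{1-\theta}{p_0} + \frac{\theta}{p_1}$ already present in the hypothesis, the theorem then yields $[\pazocal{L}^{p_0}(\mathbb{C}^d), \pazocal{L}^{p_1}(\mathbb{C}^d)]_{\theta} = \pazocal{L}^p(\mathbb{C}^d)$ at once.

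There is essentially no obstacle: the Gaussian weights lie in the privileged one-parameter family for which Stein--Weiss interpolates internally, and the cancellation noted above is exactly what makes this work. The only minor care concerns the endpoint case $p_1 = \infty$, where one reads $1/p_1 = 0$ and interprets $w_1^{\theta/p_1}$ as the constant $1$; the computation above is unaffected, since the factor $w_0^{(1-\theta)/p_0}$ alone already produces the full exponential $e^{-\frac{\pi}{2}|z|^2}$. Hence the corollary follows by direct specialization, with no further analytic input required.
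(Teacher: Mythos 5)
Your approach is exactly the paper's: the corollary is stated there as an immediate specialization of the Stein--Weiss theorem to the Gaussian weights, and your verification of the weight identity $w_0^{(1-\theta)/p_0}w_1^{\theta/p_1}=e^{-\frac{\pi}{2}|z|^2\left[(1-\theta)+\theta\right]}=w^{1/p}$ for finite $p_0\leq p_1$ is correct and is precisely the cancellation that makes the family of measures $\mathrm{d}\mu_p$ interpolate internally.

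One detail in your endpoint discussion is wrong, though, and it matters because $p_1=\infty$ is the case the paper actually uses (interpolating between $\pazocal{L}^1$ and $\pazocal{L}^\infty$ in Lemma \ref{lemma_complex_interpolation_characteristic_function}). You claim that for $p_1=\infty$ the factor $w_0^{(1-\theta)/p_0}$ alone already produces the full exponential $e^{-\frac{\pi}{2}|z|^2}$. It does not: $w_0^{(1-\theta)/p_0}=\bigl(e^{-\frac{p_0}{2}\pi|z|^2}\bigr)^{(1-\theta)/p_0}=e^{-\frac{(1-\theta)}{2}\pi|z|^2}$, which equals $e^{-\frac{\pi}{2}|z|^2}$ only when $\theta=0$. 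The missing factor $e^{-\frac{\theta}{2}\pi|z|^2}$ has to come from the weight attached to the $L^\infty$ endpoint: the paper defines $\pazocal{L}^{\infty}(\mathbb{C}^d)$ by $\|F\|_{\pazocal{L}^{\infty}}=\esssup |F(z)|e^{-\frac{\pi}{2}|z|^2}$, i.e., as a weighted $L^\infty$ space with multiplier weight $v_1(z)=e^{-\frac{\pi}{2}|z|^2}$, and in the Stein--Weiss prescription at this endpoint the term $w_1^{\theta/p_1}$ is replaced by $v_1^{\theta}=e^{-\frac{\theta}{2}\pi|z|^2}$, not by the constant $1$. With that correction the product is again $e^{-\frac{(1-\theta)}{2}\pi|z|^2}\cdot e^{-\frac{\theta}{2}\pi|z|^2}=e^{-\frac{\pi}{2}|z|^2}=w^{1/p}$ with $1/p=(1-\theta)/p_0$, and the corollary holds as stated, including the endpoint.
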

For Fock spaces we have the following interpolation:
\begin{theorem}
\label{theorem_interpolation_Fock_spaces}
For any $1\leq p_0 \leq p_1 \leq \infty$ and $\theta\in[0,1]$, we have that
\begin{align*}
    \left[\pazocal{F}^{p_0}(\mathbb{C}^d), \pazocal{F}^{p_1}(\mathbb{C}^d) \right]_{\theta} = \pazocal{F}^{p}(\mathbb{C}^d), \ \ \text{where} \ \ \frac{1}{p} = \frac{1-\theta}{p_{0}}+\frac{\theta}{p_1}.
\end{align*}
\end{theorem}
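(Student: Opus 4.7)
The strategy is a standard retraction/coretraction argument using the Fock projection
$$PF(z) = \int_{\mathbb{C}^d} K_\xi(z) F(\xi) e^{-\pi|\xi|^2}\,\mathrm{d}A(\xi)$$
as the retraction. The key ingredient I would import (from Chapter 2 of \cite{Zhu_Analysis_on_Fock_spaces}, generalized from $\mathbb{C}$ to $\mathbb{C}^d$) is that $P$ is bounded as a map $P:\pazocal{L}^{q}(\mathbb{C}^d)\to\pazocal{F}^{q}(\mathbb{C}^d)$ for every $1\le q\le\infty$; this is proved by Schur's test for $1<q<\infty$, by a direct Gaussian kernel estimate for $q=\infty$, and by the Fubini-based argument that already appears in the discussion around \eqref{Toeplitz_operator_inner_product} for $q=1$. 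I would also note that $\pazocal{F}^{q}(\mathbb{C}^d)$ is a closed subspace of $\pazocal{L}^{q}(\mathbb{C}^d)$, since norm convergence in $\pazocal{L}^q$ implies locally uniform convergence on compacta via the subaveraging Lemma \ref{lemma_subaveraging_in_Fock_space}, and uniform limits of entire functions are entire.

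With these facts in hand, the proof splits into the two inclusions. For $[\pazocal{F}^{p_0},\pazocal{F}^{p_1}]_\theta\hookrightarrow\pazocal{F}^p$, I would apply Theorem \ref{theorem_complex_interpolation_linear_operator_estimate} to the inclusions $\iota:\pazocal{F}^{p_k}\hookrightarrow\pazocal{L}^{p_k}$, each of norm $\le 1$. This yields a continuous embedding
$$\iota:[\pazocal{F}^{p_0},\pazocal{F}^{p_1}]_\theta \longrightarrow [\pazocal{L}^{p_0},\pazocal{L}^{p_1}]_\theta = \pazocal{L}^p,$$
where the right-hand equality is Corollary \ref{corollary_interpolation_Gaussian_weighted_Lp_spaces}. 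Since every element of $\pazocal{F}^{p_0}+\pazocal{F}^{p_1}$ is entire (sum of entire functions), the image consists of entire functions in $\pazocal{L}^p$, i.e.\ of elements of $\pazocal{F}^p$, with $\|F\|_{\pazocal{L}^p}\le\|F\|_\theta$.

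For the reverse inclusion $\pazocal{F}^p\hookrightarrow[\pazocal{F}^{p_0},\pazocal{F}^{p_1}]_\theta$, I would apply Theorem \ref{theorem_complex_interpolation_linear_operator_estimate} to the Fock projection $P:\pazocal{L}^{p_k}\to\pazocal{F}^{p_k}$ (bounded by the imported fact above). This gives a bounded operator
$$P:\pazocal{L}^p = [\pazocal{L}^{p_0},\pazocal{L}^{p_1}]_\theta \longrightarrow [\pazocal{F}^{p_0},\pazocal{F}^{p_1}]_\theta,$$
again via Corollary \ref{corollary_interpolation_Gaussian_weighted_Lp_spaces} on the source side. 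For $F\in\pazocal{F}^p\subseteq\pazocal{L}^p$ the reproducing property gives $PF=F$, so $F\in[\pazocal{F}^{p_0},\pazocal{F}^{p_1}]_\theta$ with $\|F\|_\theta\le\|P\|\,\|F\|_{\pazocal{L}^p}$. Combining the two inclusions yields equality of the spaces with equivalent norms.

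The main obstacle is the endpoint case $p_1=\infty$, where the boundedness $P:\pazocal{L}^\infty\to\pazocal{F}^\infty$ cannot be derived from Schur's test in its standard form; instead it requires a direct pointwise estimate of the form $|PF(z)|e^{-\pi|z|^2/2}\le C\|F\|_{\pazocal{L}^\infty}\int|K_\xi(z)|e^{-\pi|\xi|^2-\pi|z|^2/2}\mathrm{d}A(\xi)$, with the remaining Gaussian integral controlled uniformly in $z$ by completing the square. Apart from this endpoint verification, every step is an application of results already stated in the appendix, so the argument is essentially a book-keeping exercise once the projection $P$ is known to be bounded across the required range of $p$.
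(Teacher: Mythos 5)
Your proposal is correct, but it is worth noting that the paper does not actually prove this statement: Theorem \ref{theorem_interpolation_Fock_spaces} is imported wholesale from Chapter 2.4 of \cite{Zhu_Analysis_on_Fock_spaces} with the remark that the extension from $\mathbb{C}$ to $\mathbb{C}^d$ is routine. What you have written is essentially the proof that lives in the cited source: the retract argument with the Fock projection $P$ as the common bounded projection onto $\pazocal{F}^{p_k}\subseteq\pazocal{L}^{p_k}$, combined with Stein--Weiss on the ambient weighted $L^p$-scale. Both inclusions are set up correctly, and the norm equivalence $\|F\|_{\pazocal{L}^p}\le\|F\|_\theta\le\|P\|\,\|F\|_{\pazocal{L}^p}$ falls out as you describe. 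Two small points. First, you use $PF=F$ for $F\in\pazocal{F}^p$ with general $p$, but the reproducing property is only established in the paper for $p=2$; for other $p$ one must check separately that the integral defining $PF$ converges and reproduces $F$ (standard, e.g.\ by verifying it on a dense subclass or directly via the kernel estimate, but it is a step you should not leave implicit, particularly for $p=\infty$ where density arguments fail). Second, your closing worry about the endpoint $p_1=\infty$ is overstated: the direct computation you sketch, completing the square in $\int|K_\xi(z)|e^{-\pi|\xi|^2+\frac{\pi}{2}|\xi|^2}\mathrm{d}A(\xi)$, gives $\|P\|_{\pazocal{L}^\infty\to\pazocal{L}^\infty}\le 2^d$ immediately, and the very same Gaussian identity run through Fubini handles $q=1$ and, via Schur's test with weight $e^{t\pi|z|^2}$, every intermediate $q$; so there is one uniform estimate rather than three separate cases. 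With those details filled in, the argument is complete and faithful to the standard proof.
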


With these interpolation results, we are ready to prove Lemma \ref{lemma_complex_interpolation_characteristic_function}.
\begin{proof}
(Lemma \ref{lemma_complex_interpolation_characteristic_function}) 
Note that the statement is equivalent to the linear mapping 
\begin{align*}
    &T_{\Omega}:\pazocal{F}^p(\mathbb{C}^d)\to \pazocal{L}^p(\mathbb{C}^d), \ \ F\mapsto F\cdot \chi_{\Omega}
\intertext{satisfying the operator norm inequality $\|T_{\Omega}\|_{\pazocal{L}^p}^{p}\leq \| T_{\Omega}\|_{\pazocal{L}^1}$ for $p\geq 1$. Therefore, we consider the mapping $T_{\Omega}$ between the spaces} 
    &T_{\Omega}: \pazocal{F}^{1}(\mathbb{C}^d)+\pazocal{F}^{\infty}(\mathbb{C}^d)\to \pazocal{L}^1(\mathbb{C}^d)+\pazocal{L}^{\infty}(\mathbb{C}^d),
\intertext{which is clearly bounded. By Theorem \ref{theorem_complex_interpolation_linear_operator_estimate},} 
    &T_{\Omega}: \left[\pazocal{F}^{1}(\mathbb{C}^d),\pazocal{F}^{\infty}(\mathbb{C}^d)\right]_{\theta}\to \left[\pazocal{L}^1(\mathbb{C}^d), \pazocal{L}^{\infty}(\mathbb{C}^d)\right]_{\theta},
\end{align*}
and the associated operator norm is bounded by $\| T_{\Omega}\|_{\pazocal{L}^1}^{1-\theta}\|T_{\Omega}\|_{\pazocal{L}^{\infty}}^{\theta}$. By Corollary \ref{corollary_interpolation_Gaussian_weighted_Lp_spaces} and Theorem \ref{theorem_interpolation_Fock_spaces}, these interpolation spaces correspond to $\pazocal{F}^{p}(\mathbb{C}^d)$ and $\pazocal{L}^p(\mathbb{C}^d)$, respectively, with $p^{-1}=1-\theta$. In addition, $\|T_{\Omega}\|_{\pazocal{L}^{\infty}} \leq 1$, from which the norm estimate readily follows. 
\end{proof}

\section{Omitted proof: Simple porosity estimate of Cantor sets}
\label{appendix_porosity_estimate_cantor_set}
We shall prove the following simple porosity estimate for the $n$-iterate Cantor set in $1$-dimension.  
\begin{lemma}
The $n$-iterate Cantor set $\pazocal{C}_n(L,M,\mathcal{A})$ with base $M>1$ and alphabet size $|\mathcal{A}|<M$, based in the interval $[0,L]$, is $\nu$-porous on scales $L M^{-n+1}$ to $\infty$, with any $\nu \leq M^{-2}$. 
\begin{proof}
Without loss of generality, we assume $L=1$. Consider an interval $I\subseteq \mathbb{R}$ of size $M^{-m+1}\leq |I| \leq M^{-m+2}$ for some integer $1\leq m\leq n$. Suppose first that the intersection $I\cap \pazocal{C}_m(\dots)$ does \textit{not} form an interval. Then, by the Cantor set construction, there exists an interval $J\subseteq I$ of size $|J|= M^{-m}\geq M^{-2}|I|$ such that $|J\cap \pazocal{C}_{m}(\dots)|=0$. Conversely, suppose the intersection forms an interval, effectively dividing the remainder $I\setminus \pazocal{C}_{m}(\dots)$ into two intervals $J_1, J_2$. Again, by the Cantor set construction, we have the upper bound $|I\cap \pazocal{C}_{m}(\dots)|\leq |\mathcal{A}|M^{-m}\leq (M-1)M^{-m}$, so that $\max\{|J_1|, |J_2|\} \geq \frac{1}{2}(|I|-(M-1)M^{-m})$. Hence, we conclude that there exists an interval $J\subseteq I$ with $|I\cap \pazocal{C}_n(\dots)|=0$ of size
\begin{align*}
    |J| =\left(\frac{|I|-(M-1)M^{-m}}{2|I|}\right)|I|\geq M^{-2}|I|.
\end{align*}
Since $m$ is arbitrary and $\pazocal{C}_n(\dots)\subseteq \pazocal{C}_m(\dots)$, the statement holds for all intervals $I$ with $M^{-n+1}\leq |I|\leq M$. For $|I|\geq M$ the statement becomes trivial as $\pazocal{C}_n(\dots)\subset[0,1]$.
\end{proof}
\end{lemma}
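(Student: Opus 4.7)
The plan is to rescale to $L=1$ and then exploit the self-similar construction of the Cantor iterates at a single well-chosen intermediate level. Given any interval $I \subseteq \mathbb{R}$ whose length lies in the prescribed scale range, I would identify the unique integer $m$ with $1 \leq m \leq n$ such that $M^{-m+1} \leq |I| \leq M^{-m+2}$; this choice matches $|I|$ to the scale of the surviving building blocks of $\pazocal{C}_m(\dots)$, each of length $M^{-m}$. Since $\pazocal{C}_n(\dots) \subseteq \pazocal{C}_m(\dots)$, it then suffices to produce a gap of length at least $M^{-2}|I|$ inside $I\setminus \pazocal{C}_m(\dots)$.

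The bulk of the argument is then a simple dichotomy based on how $I$ meets $\pazocal{C}_m(\dots)$. If the intersection $I\cap \pazocal{C}_m(\dots)$ is \emph{not} a single interval, then by construction there must be an interior gap of length at least $M^{-m}$, which is already $\geq M^{-2}|I|$ by the upper bound $|I|\leq M^{-m+2}$. Otherwise the intersection is itself a single interval and the complement in $I$ splits into two subintervals $J_1, J_2$; here I would use the crude upper bound $|I\cap \pazocal{C}_m(\dots)|\leq |\mathcal{A}|M^{-m}\leq (M-1)M^{-m}$ to force $\max\{|J_1|,|J_2|\}\geq \tfrac{1}{2}(|I|-(M-1)M^{-m})$, then combine with the lower bound $|I|\geq M^{-m+1}$ to conclude that this is at least $M^{-2}|I|$.

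The main technical delicacy I expect is the choice of scale window: the range $M^{-m+1}\leq |I|\leq M^{-m+2}$ must tile the admissible lengths in such a way that the arithmetic in the second case actually closes up against the factor $M^{-2}$, and it is precisely this matching that dictates the lower bound scale $LM^{-n+1}$ after unscaling, rather than the naive $LM^{-n}$. The regime $|I|\geq M$ is handled trivially since $\pazocal{C}_n(\dots)\subseteq [0,1]$, so any large gap outside $[0,1]$ works, which completes the case analysis.
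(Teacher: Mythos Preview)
Your proposal is correct and follows essentially the same argument as the paper: the same rescaling to $L=1$, the same choice of intermediate level $m$ via the window $M^{-m+1}\le |I|\le M^{-m+2}$, the same dichotomy on whether $I\cap \pazocal{C}_m(\dots)$ is an interval, and the same arithmetic bounds in each case. The only cosmetic difference is that the paper does not insist on uniqueness of $m$ (the windows overlap at endpoints), but this has no bearing on the proof.
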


\section{Subaveraging in Fock space}
\label{appendix_subaveraging_proof}
We shall generalize the subaveraging statement made in Lemma 2.32 in \cite{Zhu_Analysis_on_Fock_spaces} for the Fock space over $\mathbb{C}$ to the space over $\mathbb{C}^{d}$, namely: 
\begin{lemma}
\label{lemma_subaveraging_in_Fock_space_appendix}
For any $F\in \pazocal{F}^p(\mathbb{C}^{d})$ and any point $z\in \mathbb{C}^{d}$, we have for all $R>0$ that
\begin{equation}
\begin{aligned}
    |F(z)|^p e^{-\frac{p}{2}\pi |z|^2}\leq \left(\frac{p}{2}\right)^{d}\left(1-e^{-\frac{p}{2}\pi R^2}\sum_{j=0}^{d-1}\frac{\left(\frac{p}{2}\pi R^2\right)^{j}}{j!}\right)^{-1} \int_{B_{R}(z)}|F(\xi)|^p e^{-\frac{p}{2}\pi|\xi|^2}\mathrm{d}A(\xi).
\end{aligned}
\end{equation}
\end{lemma}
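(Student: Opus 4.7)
The plan is to reduce the statement to the special case $z=0$ via an isometric ``twisted translation'' of the Fock space, and then exploit the subharmonicity of $|F|^p$ together with an explicit evaluation of a radial Gaussian integral. First I would verify that the entire function $\tilde F(\xi) := F(\xi+z)\,e^{-\pi \overline{z}\cdot \xi - \frac{\pi}{2}|z|^2}$ satisfies
\begin{align*}
|\tilde F(\xi)|^p\,e^{-\frac{p}{2}\pi|\xi|^2} \;=\; |F(\xi+z)|^p\,e^{-\frac{p}{2}\pi|\xi+z|^2},
\end{align*}
using the polarization identity $|\xi+z|^2 = |\xi|^2 + 2\,\mathrm{Re}(\overline{z}\cdot\xi) + |z|^2$; in particular $|\tilde F(0)|^p = |F(z)|^p e^{-\frac{p}{2}\pi|z|^2}$. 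Substituting $\eta = \xi+z$ in the integral over $B_R(0)$ then shows that the desired inequality at an arbitrary $z$ is equivalent to the same inequality at $0$ for $\tilde F$, so it suffices to prove the statement when $z=0$.

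For the reduced problem I would use that, for any entire $F$ and any $p>0$, the function $|F|^p$ is plurisubharmonic (since $\log|F|$ is plurisubharmonic and $x\mapsto e^{px}$ is convex and increasing), hence subharmonic on $\mathbb{R}^{2d}\cong\mathbb{C}^d$. Consequently, its spherical means
\begin{align*}
M(r) \;:=\; \frac{1}{|S^{2d-1}|}\int_{S^{2d-1}} |\tilde F(r\omega)|^p\,\mathrm{d}\sigma(\omega)
\end{align*}
are non-decreasing in $r$, with $M(0) = |\tilde F(0)|^p$. Rewriting the right-hand integral in polar coordinates,
\begin{align*}
\int_{B_R(0)} |\tilde F(\xi)|^p e^{-\frac{p}{2}\pi|\xi|^2}\,\mathrm{d}A(\xi)
\;=\; |S^{2d-1}| \int_0^R M(r)\, r^{2d-1} e^{-\frac{p}{2}\pi r^2}\,\mathrm{d}r
\;\geq\; |\tilde F(0)|^p \cdot |S^{2d-1}| \int_0^R r^{2d-1} e^{-\frac{p}{2}\pi r^2}\,\mathrm{d}r.
\end{align*}

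It then remains to evaluate the radial integral explicitly. Via the substitution $u = \frac{p}{2}\pi r^2$ and the standard identity $\int_0^x u^{d-1} e^{-u}\,\mathrm{d}u = (d-1)!\bigl(1 - e^{-x}\sum_{j=0}^{d-1}\tfrac{x^j}{j!}\bigr)$ for the lower incomplete gamma function, one finds
\begin{align*}
\int_0^R r^{2d-1} e^{-\frac{p}{2}\pi r^2}\,\mathrm{d}r
\;=\; \frac{2^{d-1}(d-1)!}{(p\pi)^d}\left(1 - e^{-\frac{p}{2}\pi R^2}\sum_{j=0}^{d-1}\frac{\bigl(\tfrac{p}{2}\pi R^2\bigr)^j}{j!}\right).
\end{align*}
Combining this with the surface area formula $|S^{2d-1}| = 2\pi^d/(d-1)!$ and rearranging produces exactly the constant $(p/2)^d$ appearing in the target inequality. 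I do not anticipate a significant obstacle: the translation step is a direct calculation, the monotonicity of radial means of subharmonic functions is standard (and valid even at zeros of $F$, where it follows directly from the mean value inequality on spheres centered at the origin), and the incomplete gamma computation is routine. The most delicate bookkeeping is simply matching the exponents of $2$, $p$, and $\pi$ when combining the surface area and radial integral into the final constant.
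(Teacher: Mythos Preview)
Your proposal is correct and follows essentially the same route as the paper: the twisted translation $\tilde F(\xi)=F(\xi+z)e^{-\pi\overline{z}\cdot\xi-\frac{\pi}{2}|z|^2}$ is (up to the harmless constant factor $e^{-\frac{\pi}{2}|z|^2}$) exactly the paper's $G_z$, and both arguments then use plurisubharmonicity of $|F|^p$ to bound the point value by a spherical mean before integrating against the radial Gaussian weight and evaluating the resulting incomplete gamma integral. The only cosmetic differences are that you perform the reduction to $z=0$ first rather than last, and you invoke the implication ``plurisubharmonic $\Rightarrow$ subharmonic on $\mathbb{R}^{2d}$, hence monotone spherical means'' directly, whereas the paper obtains the spherical mean inequality by averaging the one-variable circle inequality over the quotient $\partial B_1/\!\sim$; both justifications are standard and yield the same bound.
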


In one dimension, the proof is based on the subaveraging property of subharmonic functions in $\mathbb{C}$. More precisely, for $u:\mathbb{C}\to \mathbb{R}\cup \{-\infty\}$ subharmonic, we have that 
\begin{align*}
    u(z) \leq \frac{1}{2\pi}\int_{0}^{2\pi}u(re^{i\theta}+z)\mathrm{d}\theta \ \ \forall \ r>0.  
\end{align*}
In $d$-dimensions, we instead consider plurisubharmonic functions. 
\begin{definition} \textnormal{(Plurisubharmonic)}
Let $X$ be a domain in $\subseteq\mathbb{C}^{d}$. We say that a function $u:X\to \mathbb{R}\cup\{-\infty\}$ is plurisubharmonic if
\begin{enumerate}[label =  \textnormal{(\alph*)},itemsep=0.4ex, before={\everymath{\displaystyle}}]
    \item $u$ is upper semi-continuous, and
    \item for every $z,\xi\in\mathbb{C}^{d}$ the function $\tau \mapsto u(\xi\tau+z)$ is subharmonic in the open subset of $\mathbb{C}$ where it is defined.
\end{enumerate}
\label{definition_plurisubharmonic}
\end{definition}

Many well-known examples of plurisubharmonic functions are based on the holomorphic functions. In particular, if $F$ is entire, then $|F|^{p}$ is a plurisubharmonic function in $\mathbb{C}^{d}$ for every $p>0$. Hence, by point (b) in Def. \ref{definition_plurisubharmonic}, every $F\in \pazocal{F}^{p}(\mathbb{C}^{d})$ satisfies
\begin{align}
    |F(z)|^p \leq \frac{1}{2\pi}\int_{0}^{2\pi}|F|^p(\xi r e^{i\theta}+z)\mathrm{d}\theta \ \ \forall \ \xi \in \mathbb{C}^{d} \ \ \text{and} \ \ r>0.
    \label{entire_functions_subaveraging_circle}
\end{align}
Notice that since left-hand side of \eqref{entire_functions_subaveraging_circle} is independent of $\xi$ (and $r$), we may multiply both sides by a positive function of $\xi$, integrate with respect to $\xi$ and keep the inequality \textit{intact}. 

Introduce the notation 
\begin{align*}
    \partial B_1 := \big\{z\in\mathbb{C}^{d} \ \big| \ |z|=1\big\}
\end{align*}
for the $(2d-1)$-dimensional unit sphere, and let $|\partial B_{1}|$ denote the associated surface area. Further, let $\mathrm{d}S$ denote the surface measure on $\partial B_1$.
To begin with, we show that the point evaluation $|F(z)|^2$ is bounded by an average over the sphere. 
\begin{lemma}
For every entire function $F$, we have that
\begin{align}
    |F(z)|^p \leq \frac{1}{|\partial B_1|}\int_{|\xi|=1}|F|^p(\xi r + z)\mathrm{d}S(\xi) \ \ \forall \ r>0.
    \label{entire_function_subaverage_sphere}
\end{align}
\begin{proof} 
Consider the equivalence relation $\sim$ in $\mathbb{C}^{d}$, where 
\begin{align*}
    \xi_{1} \sim \xi_{2} \iff \exists  \ \ \theta\in [0,2\pi] \ \text{such that} \ \xi_{1} = e^{i\theta}\xi_2.
\end{align*}
This induces the quotient space $\partial B_1/\sim$ of "remaining angles" over the unit sphere, with measure $\mathrm{d}\Phi$ such that $\mathrm{d}S = \mathrm{d}\theta\mathrm{d}\Phi$ and 
\begin{align*}
    |\partial B_1| =\int_{0}^{2\pi}\mathrm{d}\theta\int_{\partial B_1/\sim}\mathrm{d}\Phi= 2\pi \int_{\partial B_1/\sim}\mathrm{d}\Phi.
\end{align*}
With the above decomposition, we may integrate both sides of \eqref{entire_functions_subaveraging_circle} over the space $\partial B_1/\sim$, which leaves the desired result. 
\end{proof}
\end{lemma}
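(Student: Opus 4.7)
The plan is to obtain the sphere-average inequality by averaging the one-dimensional (complex-line) circle mean value inequality \eqref{entire_functions_subaveraging_circle} over the directions $\xi$ on the unit sphere. For each unit vector $\xi \in \partial B_1$, equation \eqref{entire_functions_subaveraging_circle} already bounds $|F(z)|^{p}$ by the average of $|F|^{p}$ over the circle $\{z+r e^{i\theta}\xi : \theta\in[0,2\pi]\}$, so the task reduces to assembling these per-direction circle means into a genuine average over the full sphere $\partial B_{1}$.

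First I would introduce the natural $S^{1}$-action on $\partial B_{1}\subset\mathbb{C}^{d}$ given by $\xi\mapsto e^{i\theta}\xi$ together with its orbit equivalence relation $\sim$, exactly as in the paper. The orbits are the fibres of the quotient map $\partial B_{1}\to \partial B_{1}/\sim$ (topologically the Hopf-type fibration $S^{2d-1}\to \mathbb{CP}^{d-1}$), and the rotation-invariant surface measure $\mathrm{d}S$ disintegrates against this fibration as $\mathrm{d}S=\mathrm{d}\theta\,\mathrm{d}\Phi$, where $\mathrm{d}\Phi$ is the induced measure on $\partial B_{1}/\sim$. Integrating $1$ across the fibration yields
\begin{equation*}
|\partial B_{1}| \;=\; \int_{0}^{2\pi}\mathrm{d}\theta\int_{\partial B_{1}/\sim}\mathrm{d}\Phi \;=\; 2\pi\int_{\partial B_{1}/\sim}\mathrm{d}\Phi .
\end{equation*}

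Next I would integrate both sides of \eqref{entire_functions_subaveraging_circle} against $\mathrm{d}\Phi$ over $\partial B_{1}/\sim$, with the fixed radius $r>0$. The left-hand side is independent of $\xi$, so it factors out and produces $|F(z)|^{p}\cdot |\partial B_{1}|/(2\pi)$. For the right-hand side, the disintegration $\mathrm{d}S=\mathrm{d}\theta\,\mathrm{d}\Phi$ identifies the iterated integral in $\theta$ and $\Phi$ with an integral against $\mathrm{d}S$ over the whole sphere:
\begin{equation*}
\int_{\partial B_{1}/\sim}\!\frac{1}{2\pi}\int_{0}^{2\pi}|F|^{p}\bigl(z+r e^{i\theta}\xi\bigr)\,\mathrm{d}\theta\,\mathrm{d}\Phi(\xi) \;=\; \frac{1}{2\pi}\int_{\partial B_{1}}|F|^{p}\bigl(z+r\eta\bigr)\,\mathrm{d}S(\eta).
\end{equation*}
Dividing through by $|\partial B_{1}|/(2\pi)$ produces precisely \eqref{entire_function_subaverage_sphere}.

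The one genuinely technical point is the measure decomposition $\mathrm{d}S=\mathrm{d}\theta\,\mathrm{d}\Phi$ and the Fubini step that rewrites the iterated integral as a single integral against $\mathrm{d}S$. This is standard for smooth free $S^{1}$-actions on a Riemannian manifold, and requires no property of $F$ beyond the pluri-subharmonicity of $|F|^{p}$ already invoked to get \eqref{entire_functions_subaveraging_circle}; once this disintegration is in hand, the remainder of the argument is a direct computation.
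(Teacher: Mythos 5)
Your argument is correct and follows essentially the same route as the paper's proof: averaging the per-direction circle mean value inequality \eqref{entire_functions_subaveraging_circle} over the quotient $\partial B_{1}/\sim$ of the $S^{1}$-action, using the disintegration $\mathrm{d}S=\mathrm{d}\theta\,\mathrm{d}\Phi$. The only difference is that you spell out the fibration and Fubini step a little more explicitly than the paper does.
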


Proceeding, we relate subaverage over the sphere to an subaverage over the ball $B_{R}(0)$. 

\begin{lemma}
\label{lemma_subaverage_ball}
For every entire function $F$, we have for all $R>0$ that
\begin{equation}
\begin{aligned}
    |F(z)|^p\leq \left(\frac{p}{2}\right)^{d}\left(1-e^{-\frac{p}{2}\pi R^2}\sum_{j=0}^{d-1}\frac{\left(\frac{p}{2}\pi R^2\right)^{j}}{j!}\right)^{-1} \int_{B_R(0)}|F|^p(z+\xi)e^{-\frac{p}{2}\pi |\xi|^2}\mathrm{d}A(\xi).
    \label{entire_function_subaverage_ball}
\end{aligned}
\end{equation}
\begin{proof}
For any $\omega \in \mathbb{C}^{d}$, we have the basic decomposition $\omega = r\xi$ for $|\xi| = 1$ and $r = |\omega|$, so that the volume measure can be expressed $\mathrm{d}A(\omega) = r^{2d-1}\mathrm{d}r\mathrm{d}S(\xi)$. Since we consider the Gaussian measure of the Fock space $\pazocal{F}^{p}(\mathbb{C}^{d})$, we integrate both sides of inequality \eqref{entire_function_subaverage_sphere} against $e^{-\frac{p}{2}\pi r^2} r^{2d-1}\mathrm{d}r$. For the right-hand side, we find that
\begin{align*}
    \frac{1}{|\partial B_1|}\int_{0}^{R}\int_{|\xi|=1}|F|^{p}(\xi r + z)e^{-\frac{p}{2}\pi r^2}r^{2d-1} \mathrm{d}S(\xi)\mathrm{d}r = \frac{1}{|\partial B_1|} \int_{B_{R}(0)}|F|^{p}(\omega+z)e^{-\frac{p}{2}\pi |\omega|^2}\mathrm{d}A(\omega).
\end{align*}
While for the left-hand side, using the formula 
\begin{align*}
    \int_{0}^{L}r^{k}e^{-r}\mathrm{d}r = k! \left(1-e^{-L}\sum_{j=0}^{k}\frac{L}{j!}\right) \ \ \text{for } k=0,1,2,\dots,
\end{align*}
we obtain
\begin{align*}
    |F(z)|^p\int_{0}^{R}e^{-\frac{p}{2}\pi r^2} r^{2d-1}\mathrm{d}r = |F(z)|^p \frac{(d-1)!}{2}\left(\frac{2}{p\pi}\right)^d \left(1-e^{-\frac{p}{2}\pi R^2}\sum_{j=0}^{d-1}\frac{\left(\frac{p}{2}\pi R^2\right)^j}{j!}\right).
\end{align*}
Since the surface area of the $(2d-1)$-dimensional unit sphere is given by $|\partial B_1| = 2 \pi^{d}/(d-1)!$, inequality \eqref{entire_function_subaverage_ball} follows. 
\end{proof}
\end{lemma}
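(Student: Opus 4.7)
The plan is to integrate the sphere subaveraging inequality \eqref{entire_function_subaverage_sphere} against the Gaussian-weighted radial measure so that the right-hand side becomes a Fock-weighted integral over the ball $B_{R}(0)$.

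First, I would use the fact that \eqref{entire_function_subaverage_sphere} holds for every $r>0$ and that its left-hand side $|F(z)|^{p}$ is independent of $r$. Multiplying both sides by the positive weight $e^{-\frac{p}{2}\pi r^{2}}r^{2d-1}$ and integrating $r$ from $0$ to $R$ preserves the inequality. For the right-hand side, I would invoke the spherical decomposition of Lebesgue measure on $\mathbb{C}^{d}\cong\mathbb{R}^{2d}$, namely $\mathrm{d}A(\omega)=r^{2d-1}\mathrm{d}r\,\mathrm{d}S(\xi)$ under $\omega=r\xi$ with $|\xi|=1$, and swap the order of integration via Fubini. This converts the iterated integral of $|F|^{p}(\xi r+z)e^{-\frac{p}{2}\pi r^{2}}r^{2d-1}$ over $[0,R]\times\partial B_{1}$ into $\tfrac{1}{|\partial B_{1}|}\int_{B_{R}(0)}|F|^{p}(\omega+z)e^{-\frac{p}{2}\pi|\omega|^{2}}\mathrm{d}A(\omega)$, which is exactly the shape required after a change of variable $\omega\mapsto \xi$.

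Next, for the left-hand side I would evaluate the radial integral $\int_{0}^{R}r^{2d-1}e^{-\frac{p}{2}\pi r^{2}}\mathrm{d}r$ in closed form. Substituting $t=\tfrac{p}{2}\pi r^{2}$ so $r^{2d-2}\mathrm{d}r=\tfrac{1}{2}\bigl(\tfrac{2}{p\pi}\bigr)^{d-1}t^{d-1}\tfrac{\mathrm{d}t}{\sqrt{\ldots}}$ (carrying the Jacobian carefully) reduces the integral to a lower incomplete gamma of the form $\int_{0}^{L}t^{k}e^{-t}\mathrm{d}t$ with $k=d-1$ and $L=\tfrac{p}{2}\pi R^{2}$. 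Applying the identity $\int_{0}^{L}t^{k}e^{-t}\mathrm{d}t=k!\bigl(1-e^{-L}\sum_{j=0}^{k}\tfrac{L^{j}}{j!}\bigr)$ recorded in the proof yields
\begin{equation*}
\int_{0}^{R}r^{2d-1}e^{-\frac{p}{2}\pi r^{2}}\mathrm{d}r=\frac{(d-1)!}{2}\left(\frac{2}{p\pi}\right)^{d}\left(1-e^{-\frac{p}{2}\pi R^{2}}\sum_{j=0}^{d-1}\frac{\bigl(\frac{p}{2}\pi R^{2}\bigr)^{j}}{j!}\right).
\end{equation*}

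Finally, I would substitute the known value $|\partial B_{1}|=2\pi^{d}/(d-1)!$ for the surface area of the unit sphere in $\mathbb{R}^{2d}$, combine it with the computed radial integral, and divide the inequality by the coefficient of $|F(z)|^{p}$ on the left to obtain exactly the stated bound with constant $\bigl(\tfrac{p}{2}\bigr)^{d}\bigl(1-e^{-\frac{p}{2}\pi R^{2}}\sum_{j=0}^{d-1}\tfrac{(\frac{p}{2}\pi R^{2})^{j}}{j!}\bigr)^{-1}$. The only real obstacle is bookkeeping: tracking the powers of $\tfrac{2}{p\pi}$ arising from the substitution and cancelling them against $|\partial B_{1}|$ so the final constant collapses to $(p/2)^{d}$; everything else is a direct application of results stated earlier in the appendix.
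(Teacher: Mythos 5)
Your proposal is correct and follows essentially the same route as the paper's proof: integrate the sphere subaveraging inequality against $e^{-\frac{p}{2}\pi r^{2}}r^{2d-1}\mathrm{d}r$, identify the right-hand side with the ball integral via the polar decomposition of $\mathrm{d}A$, evaluate the radial integral with the substitution $t=\frac{p}{2}\pi r^{2}$ and the incomplete gamma identity, and cancel against $|\partial B_{1}|=2\pi^{d}/(d-1)!$. The final constants you report match the paper exactly, so the only remaining work is the bookkeeping you already identify.
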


We are now ready to prove Lemma \ref{lemma_subaveraging_in_Fock_space_appendix}.

\begin{proof}
(Lemma \ref{lemma_subaveraging_in_Fock_space_appendix})
Consider the integral over the ball centered at $z\in\mathbb{C}^{d}$, 
\begin{equation}
\begin{aligned}
    \int_{B_{R}(z)}|F(\xi)|^{p}e^{-\frac{p}{2}\pi|\xi|^{2}}\mathrm{d}A(\xi)= &\int_{B_{R}(0)}|F|^{p}(z+\xi)e^{-\frac{p}{2}\pi|z+\xi|^{2}}\mathrm{d}A(\xi)\\ = e^{-\frac{p}{2}\pi|z|^2}&\int_{B_{R}(0)}|F(z+\xi)e^{-\pi\xi\cdot\overline{z}}|^{p}e^{-\frac{p}{2}\pi|\xi|^{2}}\mathrm{d}A(\xi). 
    \label{subaverage_Fock_space_intermediate_computation}
\end{aligned}
\end{equation}
Define $G_{z}(\xi) := F(z+\xi)e^{-\pi\xi\cdot\overline{z}}$, which is an entire function with respect to $\xi$. Since $G_{z}(0)=F(z)$, the inequality follows once we apply Lemma \ref{lemma_subaverage_ball} to $|G_{z}(0)|^{p}$, combined with \eqref{subaverage_Fock_space_intermediate_computation}.
\end{proof}
 
\section*{Acknowledgements}
The research of the author was supported by Grant 275113 of the Research Council of
Norway. The author would like to extend thanks to prof. Eugenia Malinnikova for many insightful
discussions that helped shape the ideas in the manuscript. In addition, the author would like to thank prof. Franz Luef for the support and who suggested that fractal uncertainty principles for the Daubechies' operator could be translated to the context of Gabor multipliers. 

\addcontentsline{toc}{section}{References}
\bibliographystyle{abbrv}
\bibliography{A_Fractal_Uncertainty_Principle_for_the_STFT_and_Gabor_multipliers}

\end{document}